\newtheorem{theorem}{Theorem}[section]
\newtheorem{lemma}[theorem]{Lemma}
\newtheorem{proposition}[theorem]{Proposition}
\newtheorem{corollary}[theorem]{Corollary}
\newtheorem{definition}[theorem]{Definition}
\newtheorem{example}[theorem]{Example}
\newtheorem{remark}[theorem]{Remark}
\numberwithin{equation}{section}
\def\rad{\mathop{\hbox{rad}}}
\title{A Characterization of Finite EI Categories with Hereditary Category Algebras}
\author{\footnotesize{Liping Li} \\
\footnotesize{School of Mathematics, University of Minnesota, MN, 55455, USA}}
\begin{document}
\maketitle

\begin{abstract}
In this paper we give an explicit algorithm to construct the ordinary quiver of a finite EI category for which the endomorphism groups of all objects have orders invertible in the field $k$. We classify all finite EI categories with hereditary category algebras, characterizing them as free EI categories (in a sense which we define) for which all endomorphism groups of objects have invertible orders. Some applications on the representation types of finite EI categories are derived.
\end{abstract}

\section{Introduction}

In this paper we study the representations of \textit{finite EI categories}. They are small categories with finitely many morphisms in which every endomorphism is an isomorphism. This concept includes many structures such as finite groups, finite posets and free categories associated to finite quivers. A \textit{representation} of $\mathcal{C}$ is a covariant functor from $\mathcal{C}$ to the category of finite dimensional vector spaces, and it may be regarded as the same thing as a finitely generated module for the category algebra.\

We introduce the concepts of \textit{finite EI quivers} and \textit{finite free EI categories}. A finite EI quiver is a quiver equipped with two maps $f$ and $g$. The map $f$ assigns a finite group $f(v) = G_v$ to each vertex $v$ in the quiver and the map $g$ assigns a $(G_w, G_v)$-biset to each arrow from the vertex $v$ to the vertex $w$ in the quiver. Finite free EI categories are categories generated from finite EI quivers by a specific rule which generalizes the construction of a free category from a quiver ~\cite{MacLane}. We will show that they are characterized by a certain unique factorization property of the non-isomorphisms. In a certain sense a finite free EI category is the largest EI category which can be generated by a finite EI quiver.\

For every finite EI category $\mathcal{C}$, we will construct a finite free EI category $\hat{\mathcal{C}}$ and a full functor $\hat{F}: \hat{\mathcal{C}} \rightarrow \mathcal{C}$ such that $\hat{F}$ is the identity map restricted to objects, endomorphisms and all non-isomorphisms which can not be expressed as composites of more than one non-isomorphism. This category $\hat{\mathcal{C}}$  is unique up to isomorphism. We call $\hat{\mathcal{C}}$ the \textit{free EI cover} of $\mathcal{C}$.\

A finite EI category $\mathcal{C}$ determines a finite dimensional associative $k$-algebra $k\mathcal{C}$ with identity, called the category algebra (defined in section 2). Under the assumption that the endomorphism groups of all objects in $\mathcal{C}$ have orders invertible in $k$, we will describe an explicit algorithm to construct the ordinary quiver of the category algebra $k\mathcal{C}$. This algorithm uses the representations of semisimple group algebras and their restrictions to subgroups. It is easier and more intuitive than the method by computing Exts, introduced in ~\cite{Simson}. Our main results are described in the following theorems:

\begin{theorem}
Let $\mathcal{C}$ be a finite EI category for which the endomorphism groups of all objects have orders invertible in the field $k$. Then the quiver $Q$ constructed by our algorithm is precisely the ordinary quiver of the category algebra $k\mathcal{C}$. Moreover, $k\mathcal{C}$ has the same ordinary quiver as that of $k \hat{\mathcal{C}}$, the category algebra of the free EI cover $\hat{\mathcal{C}}$ of $\mathcal{C}$.
\end{theorem}

\begin{theorem}
Let $\mathcal{C}$ be a finite EI category. Then the category algebra $k\mathcal{C}$ is hereditary if and only if $\mathcal{C}$ is a finite free EI category satisfying the condition that the endomorphism groups of all objects have orders invertible in $k$.
\end{theorem}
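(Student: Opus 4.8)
The plan is to reduce both sides of the equivalence to a single structural statement: that $k\mathcal{C}$ is isomorphic to the tensor algebra $T_B(V)$, where $B = \prod_x kG_x$ is the product of the endomorphism group algebras (one factor per object, working in a skeleton of $\mathcal{C}$) and $V$ is the $B$-$B$-bimodule spanned by the irreducible (unfactorizable) non-isomorphisms. The span $I$ of all non-isomorphisms is a two-sided ideal with $k\mathcal{C}/I \cong B$, and since composing non-isomorphisms strictly increases the canonical preorder on objects, $I$ is nilpotent; hence $I \subseteq \rad(k\mathcal{C})$, with equality exactly when $B$ is semisimple. I would invoke two standard facts: first, the tensor algebra $T_B(V)$ over a semisimple ring $B$ is hereditary; second, conversely, a finite-dimensional algebra $A$ with separable semisimple quotient $A/\rad A$ is hereditary if and only if the canonical surjection $T_{A/\rad A}(\rad A / \rad^2 A) \to A$ is an isomorphism. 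By Maschke's theorem, $B$ is semisimple (and separable) precisely when every $|G_x|$ is invertible in $k$.

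For the direction assuming $\mathcal{C}$ is a finite free EI category with all $|G_x|$ invertible: the unique factorization property of non-isomorphisms (which characterizes free EI categories) says exactly that every non-isomorphism factors into irreducibles uniquely up to insertion of automorphisms, and that automorphism ambiguity is precisely what $\otimes_B$ records. Assembling the composition maps $V^{\otimes_B n} \to k\mathcal{C}$ therefore yields an algebra isomorphism $T_B(V) \xrightarrow{\sim} k\mathcal{C}$, independently of any invertibility hypothesis. Since invertibility makes $B$ semisimple, the first standard fact gives that $k\mathcal{C} = T_B(V)$ is hereditary.

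For the converse, suppose $k\mathcal{C}$ is hereditary. I would first establish invertibility by a support argument, avoiding any general ``corner algebra'' lemma. Choose a maximal object $x$ (no non-isomorphism leaves $x$); then the projective $P_x = k\mathcal{C}e_x$ equals $e_x k\mathcal{C} e_x = kG_x$ and is supported only at $x$, so the full subcategory of $k\mathcal{C}$-modules supported at $x$ is equivalent to $kG_x$-mod and contains all the relevant indecomposable projectives. Consequently the minimal $k\mathcal{C}$-projective resolution of any simple supported at $x$ stays supported at $x$ and coincides with its minimal $kG_x$-resolution; finiteness of global dimension then forces $kG_x$ to have finite global dimension, hence (being self-injective) to be semisimple, so $|G_x|$ is invertible. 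With all $|G_x|$ invertible, $B$ is semisimple and separable and $I = \rad(k\mathcal{C})$, so the second standard fact applies: heredity gives $k\mathcal{C} \cong T_B(\rad/\rad^2) = T_B(V)$. Reading this isomorphism back through the basis of morphisms recovers the unique factorization property, i.e. $\mathcal{C}$ is a finite free EI category.

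The main obstacle I anticipate is the bookkeeping in the dictionary between the algebraic tensor-algebra structure and the categorical unique factorization property: one must check that the algebra generators coming from $\rad/\rad^2$ correspond, compatibly with the two-sided $B = \prod_x kG_x$ action, to genuine irreducible morphisms and their biset structure, so that the algebra isomorphism $T_B(V) \cong k\mathcal{C}$ translates exactly into a statement about composites of morphisms and not merely an abstract isomorphism. A secondary point requiring care is the separability of the top $B$ over a not-necessarily-algebraically-closed $k$, which is what legitimizes the clean tensor-algebra characterization of heredity used above; this is guaranteed precisely by the invertibility of the orders $|G_x|$.
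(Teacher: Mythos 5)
Your strategy is in essence a tensor--algebra repackaging of the paper's own argument: the category algebra $k\hat{\mathcal{C}}$ of the free EI cover \emph{is} your $T_B(V)$ (linearized biset products are exactly tensor products over group algebras), your natural surjection $T_B(V)\rightarrow k\mathcal{C}$ is the paper's map $\psi\colon k\hat{\mathcal{C}}\rightarrow k\mathcal{C}$, and your ``second standard fact'' plays the role of the paper's dimension count for hereditary algebras having the same simples and the same ordinary quiver. The forward direction as you present it is correct, and the observation that the UFP is equivalent to the natural map $T_B(V)\rightarrow k\mathcal{C}$ being an isomorphism is clean and genuinely useful; the translation issue you flag as your ``main obstacle'' is real but routine (equality of pure tensors in $V\otimes_B\cdots\otimes_B V$ is precisely equality in the biset product, which is precisely the insertion of automorphisms demanded by the UFP), provided one works with the \emph{natural} surjection and upgrades the abstract isomorphism of your standard fact to injectivity of that surjection by a dimension count, using that $T_B(V)$ is finite dimensional.

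The genuine gap is in the converse, at the invertibility step: your support argument establishes that $|G_x|$ is invertible only for \emph{maximal} objects $x$, and the phrase ``with all $|G_x|$ invertible'' does not follow from it. For a non-maximal object $x$ the projective cover of a simple concentrated at $x$ is $k\mathcal{C}1_x\otimes_{kG_x}P_V$, which is nonzero at every object reachable from $x$; so the subcategory of modules supported at $x$ does \emph{not} contain the relevant projectives, and your claim that minimal resolutions stay supported at $x$ fails. The gap is not cosmetic, because heredity can fail invisibly at maximal objects: take $\mathrm{char}\,k=p$, objects $x,y$ with $G_x=\mathbb{Z}/p$, $G_y=1$, and $\mathrm{Hom}_{\mathcal{C}}(x,y)$ a single morphism. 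The unique maximal object $y$ passes your test (the simple at $y$ is projective), yet $k\mathcal{C}$ is not hereditary: the first syzygy of the simple at $x$ has dimension $p-1$ at $x$ and $1$ at $y$, while the indecomposable projectives have dimensions $(p,1)$ and $(0,1)$, so it cannot be projective. Repairing this requires exactly the ingredient you set out to avoid: one needs to know (this is the content of Lemma 5.1.1 of Xu's thesis, invoked in the paper, or equivalently the classification of projective $k\mathcal{C}$-modules) that evaluating a projective $k\mathcal{C}$-module at $x$ kills every indecomposable summand attached to an object strictly above $x$ (since $\mathrm{Hom}_{\mathcal{C}}(z,x)=\emptyset$ when $\mathrm{Hom}_{\mathcal{C}}(x,z)\neq\emptyset$, $z\neq x$) and returns a projective $kG_x$-module for every summand attached to $x$. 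Granting that, the first syzygy of any simple at $x$, being projective by heredity, evaluates at $x$ to a projective $kG_x$-module, so every simple $kG_x$-module has projective dimension at most one, forcing $kG_x$ to be hereditary, hence (being self-injective) semisimple, for \emph{every} object $x$; only then do your semisimplicity of $B$, the identification $\rad k\mathcal{C}=I$, and your standard fact become available, and the rest of your argument goes through.
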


These results are useful in determining the representation types of category algebras. Indeed, if $\mathcal{C}$ is a finite free EI category for which the endomorphism groups of all objects have orders invertible in $k$, then it has the same representation type as that of its ordinary quiver. Stated explicitly, $k\mathcal{C}$ is of finite (tame, resp.) representation type if and only if the underlying graph of its ordinary quiver is a disjoint union of Dynkin (Euclidean, resp.) diagrams. Otherwise, it is of wild type. For a general finite EI category $\mathcal{C}$, $k\mathcal{C}$ is a quotient algebra of $k\hat {\mathcal{C}}$, the category algebra of its free EI cover $\hat {\mathcal{C}}$. Thus the finite representation type of $k \hat {\mathcal{C}}$ implies the finite representation type of $k\mathcal{C}$, no matter what the characteristic of $k$ is. Moreover, if $\mathcal{C}$ has a full subcategory of infinite representation type, it is of infinite representation type as well. By studying those full subcategories which are finite free EI categories, for example the connected full subcategories with two objects, we can get certain information about the representation types of general finite EI categories.\

Here is the layout of this paper: in section 2 we give some background on the representation theory of finite EI categories, and introduce the definitions of finite EI quivers and finite free EI categories. An equivalent definition by the unique factorization property of non-isomorphisms is proved, and we present some further basic properties.\

From section 3 onwards we focus on the representations of finite EI categories for which all endomorphism groups of objects have orders invertible in $k$. In section 4 we give a detailed algorithm to construct an associated quiver $Q$ for each finite EI category for which the endomorphism groups of all objects have orders invertible in the field $k$. Then we prove Theorem 1.1. In section 5 we prove Theorem 1.2, which classifies all finite EI categories with hereditary category algebras.\

Even though we can study the representation type of $\mathcal{C}$ by constructing its ordinary quiver, it is more efficient sometimes to use certain simple criteria deduced from Theorem 1.1 and Theorem 1.2. In the last section we describe such criteria expressed in terms of full subcategories with two objects.\

It is sometimes useful to know an explicit functor $F$ from $k\mathcal{C}$-mod, the category of all representations of a finite free EI category $\mathcal{C}$, to $kQ$-mod, the category of all representations of the ordinary quiver $Q$ of $k\mathcal{C}$. Although our proof of Theorem 1.2 does not rely on this functor, we describe it in detail in the appendix. This functor is proved to be full, faithful and dense, and hence induces a Morita-equivalence between $k\mathcal{C}$ and $kQ$.\

In this paper $\mathcal{C}$ is always a finite EI category with objects Ob$(\mathcal{C})$ and morphisms Mor$(\mathcal{C})$. We use Aut$_{\mathcal{C}}(x)$ (or End$_ {\mathcal{C}} (x)$) to denote the endomorphism group of a fixed object $x$ in $\mathcal{C}$ and Hom$_{\mathcal{C}}(x,y)$ to denote the set of morphisms from an object $x$ to another object $y$. The field $k$ is supposed to be algebraically closed. All modules are finitely generated left modules. Composition of group actions, morphisms and maps is from right to left.

\textbf{Acknowledgement:} The author wants to express great appreciation to his advisor, Professor Peter Webb, who led the author into this area. Without his invaluable suggestions and help this paper would not have appeared. Professor Webb also contributed the concepts of finite EI quivers and finite free EI categories for this paper. These ideas are described in his unpublished notes.

\section{Finite Free EI Categories}
For the reader's convenience, we include in this section some background on the representation theory of finite EI categories. Please refer to ~\cite{Webb1}, ~\cite{Xu1}, ~\cite{Xu2} for more details.\

The category algebra $k\mathcal{C}$ of a finite EI category $\mathcal{C}$ is the $k$-space with basis the morphisms in $\mathcal{C}$. Its product $*$ is determined by bilinearity from the product of the basis elements, given by the rule:

\begin{equation*}
\alpha * \beta = \left \{\begin{array} {rl}
\alpha \circ \beta &\text{if } \alpha \text{ and } \beta \text{ can
be composed} \\
0 &\text{otherwise.}
\end{array} \right.
\end{equation*}
Let $R$ be representation of $\mathcal{C}$ defined in section 1. Then it assigns a vector space $R(x)$ to each object $x$ in $\mathcal{C}$, and a linear transformation $R(\alpha): R(x) \rightarrow R(y)$ to each morphism $\alpha: x \rightarrow y$ such that all composition relations of morphisms in $\mathcal{C}$ are preserved under $R$. Notice that $R(x)$ has a $k\text{End}_{\mathcal{C}}(x)$-module structure for every object $x$ in $\mathcal{C}$. Indeed, for each morphism $g$ in End$_{\mathcal{C}}(x)$, $R(g)$ is an automorphism of $R(x)$. Thus we can define an action of $g$ on $R(x)$ by letting $g \cdot v = R(g)(v)$, for all $v$ in $R(x)$. A \textit{homomorphism} $\varphi: R_1 \rightarrow R_2$ of two representations is a natural transformation of functors. By Theorem 7.1 of ~\cite{Mitchell}, a representation of $\mathcal{C}$ is equivalent to a $k\mathcal{C}$-module. Thus we don't distinguish these two concepts throughout this paper.\

A finite EI category $\mathcal{C}$ is said to be of \textit{finite (tame, wild}, resp.) \textit{representation type} if the category algebra $k\mathcal{C}$ is of finite (tame, wild, resp.) type. The category $\mathcal{C}$ is \textit{connected} if for any two distinct objects $x$ and $y$, there is a list of objects $x=x_0, x_1, \ldots, x_n=y$ such that either Hom$_{\mathcal{C}} (x_i, x_{i+1})$ or Hom$_{\mathcal{C}} (x_{i+1}, x_i)$ is not empty, $0 \leqslant i \leqslant n-1$. Every finite EI category is a disjoint union of connected components, and each component is a full subcategory. If $\mathcal{C} = \bigsqcup _{i=1}^{m} \mathcal{C}_i$, the category algebra $k\mathcal{C}$ has an algebra decomposition $k\mathcal{C}_1 \oplus \ldots \oplus k\mathcal{C}_m$. Moreover, if $\mathcal{C}$ and $\mathcal{D}$ are equivalent finite EI categories, $k \mathcal{C}$-mod is equivalent to $k \mathcal{D}$-mod by Proposition 2.2 in ~\cite{Webb1}. Thus it is sufficient to study the representations of connected, skeletal finite EI categories. We make the following convention:\\

\noindent \textbf{Convention:} \textit{All finite EI categories in this paper are \textbf{connected} and \textbf{skeletal}. Thus endomorphisms, isomorphisms and automorphisms coincide.}\\

Under the hypothesis that $\mathcal{C}$ is skeletal, if $x$ and $y$ are two distinct objects in $\mathcal{C}$ with Hom$_{\mathcal{C}} (x,y)$ non-empty, then Hom$_{\mathcal{C}} (y,x)$ is empty. Indeed, if this is not true, we can take $\alpha \in \text{Hom}_{\mathcal{C}} (y, x)$ and $\beta \in \text{Hom}_{\mathcal{C}} (x,y)$. The composite $\beta \alpha$ is an endomorphism of $y$, hence an automorphism. Similarly, the composite $\alpha \beta$ is an automorphism of $x$. Thus both $\alpha$ and $\beta$ are isomorphisms, so $x$ is isomorphic to $y$. But this is impossible since $\mathcal{C}$ is skeletal and $x \neq y$.\

It is time to introduce the concept of \textit{finite free EI categories}. Before giving a formal definition, we define \textit{finite EI quivers}, which are finite quivers with extra structure.

\begin{definition}
A finite EI quiver $\hat{Q}$ is a datum $(Q_0, Q_1, s, t, f, g)$, where: $(Q_0, Q_1, s, t)$ is a finite acyclic quiver with vertex set $Q_0$, arrow set $Q_1$, source map $s$ and target map $t$. The map $f$ assigns a finite group $f(v)$ to each vertex $v \in Q_0$; the map $g$ assigns an $(f(t(\alpha)), f(s(\alpha)))$-biset to each arrow $\alpha \in Q_1$.
\end{definition}

If $f$ assigns the trivial group to each vertex in $Q_0$ in the above definition, we obtain a quiver in the usual sense. In this sense, finite acyclic quivers are special cases of finite EI quivers.\

Each finite EI quiver $\hat{Q} = (Q_0, Q_1, s, t, f, g)$ determines a finite EI category $\mathcal{C_{\hat{Q}}}$ in the following way: the objects in $\mathcal{C_{\hat{Q}}}$ are precisely the vertices in $Q_0$. For a particular object $v$ in $\mathcal{C_{\hat{Q}}}$, we define End$_{\mathcal{C_{\hat{Q}}}} (v) = f (v)$, which is a finite group by our definition. It remains to define Hom$_{\mathcal{C_{\hat{Q}}}} (v,w)$ if $v \neq w$ are distinct vertices in $Q_0$, and the composition of morphisms.\

Let $\xymatrix{v \ar@{~>}[r]^{\gamma} & w}$ be a directed path from $v$ to $w$. Then $\gamma$ can be written uniquely as a composition of arrows, where $v_i \in Q_0$ and $\alpha_i \in Q_1$ for $i=1, \ldots, n$.
\begin{equation*}
\xymatrix{v=v_0 \ar[r]^{\alpha_1} & v_1 \ar[r]^{\alpha_2} & \ldots \ar[r]^{\alpha_n} & v_n=w}
\end{equation*}
Notice that $g(\alpha_i)$ is an $(f(v_i), f(v_{i-1}))$-biset, so we define:
\begin{equation*}
H_{\gamma}=g(\alpha_n) \times_{f(v_{n-1})} g(\alpha_{n-1}) \times_{f(v_{n-2})} \ldots \times_{f(v_1)} g(\alpha_1),
\end{equation*}
the biset product defined in ~\cite{Webb2}. Finally, Hom$_{\mathcal{C_{\hat{Q}}}} (f(v),f(w))$ can be defined as $\bigsqcup _{\gamma} H_{\gamma}$, the disjoint union of all $H_{\gamma}$, over all possible paths $\gamma$ from $x$ to $y$. In the case $v = w$ we define Hom$_{\mathcal{C_{\hat{Q}}}} (v,v) = f(v)$.\

Let $\alpha$ and $\beta$ be two morphisms in $\mathcal{C_{\hat{Q}}}$. They lie in two sets $H_{\gamma_1}$ and $H_{\gamma_2}$, where $\gamma_1$ and $\gamma_2$ are two paths determined by $\alpha$ and $\beta$ respectively, possibly of length 0. Their composite $\beta \circ \alpha$ can be defined by the following rule:\ it is 0 if the composite $\gamma_2 \gamma_1$ is not defined in $\hat{Q}$. Otherwise, the initial vertex $v$ of $\gamma_2$ is exactly the terminal vertex of $\gamma_1$. Since there is a natural surjective map $p: H_{\gamma_2} \times H_{\gamma_1} \rightarrow H_{\gamma_2} \times_{f(v)} H_{\gamma_1}$, we define $\beta \circ \alpha = p(\beta, \alpha)$, the image of $(\beta, \alpha)$ in $H_{\gamma_2} \times_{f(v)} H_{\gamma_1}$. This definition satisfies the associative rule,
and in this way we get a finite EI category $\mathcal{C_{\hat{Q}}}$ from $\hat{Q}$.

\begin{definition}
A finite EI category $\mathcal{C}$ is a finite free EI category if it is isomorphic to the finite EI category $\mathcal{C_{\hat{Q}}}$ generated from a finite EI quiver $\hat{Q}$ by the above construction.
\end{definition}

In practice it is inconvenient to check whether a finite EI category $\mathcal{C}$ is free or not by using the definition. Fortunately, there is an equivalent characterization built upon unfactorizable morphisms: the Unique Factorization Property (UFP).

\begin{definition}
A morphism $\alpha: x \rightarrow z$ in $\mathcal{C}$ is unfactorizable if $\alpha$ is not an isomorphism and whenever it has a factorization as a composite $\xymatrix{x \ar[r]^{\beta} & y\ar[r]^{\gamma} & z}$, then either $\beta$ or $\gamma$ is an isomorphism.
\end{definition}

The reader may want to know the relation between the terminology \textit{unfactorizable morphism} and the term \textit{irreducible morphism} which is widely accepted and used, for example in ~\cite{Bautista} and ~\cite{Xu1}.\ Indeed, in this paper they coincide since we only deal with finite EI categories. But in a more general context, they are different, as we explain in the following example:

\begin{example}
Consider the following category $\mathcal{C}$ with two objects $x \ncong y$. The non-identity morphisms in $\mathcal{C}$ are generated by $\alpha: x \rightarrow y$ and $\beta: y \rightarrow x$ with the only nontrivial relation being $\beta \alpha = 1_x$. Then the morphisms in $\mathcal{C}$ are $1_x, 1_y, \alpha, \beta \text{ and } \alpha\beta$. It is not a finite EI category since $\alpha \beta \in \text{End} _{\mathcal{C}} (y)$ is not an isomorphism. Then neither $\alpha$ nor $\beta$ are irreducible morphisms since one of them is a split monomorphism and the other is a split epimorphism. However, the reader can check that they are unfactorizable morphisms. Furthermore, the algorithm constructing ordinary quivers of this category algebra (described in section 4) still works well. This example illustrates the reason that we introduce the notion \textit{unfactorizable morphisms}: our algorithm works in a more general situation where unfactorizable morphisms do not coincide with irreducible morphisms any more.
\end{example}

\begin{equation*}
\xymatrix{ x \ar@(ul,dl)_{1_x} \ar@/^/[rr] ^{\alpha} & & y \ar@/^/[ll] ^{\beta} \ar@(ur,dr) ^{1_y}}
\end{equation*}

Note that the composite of an unfactorizable morphism with an isomorphism is still unfactorizable.

\begin{proposition}
Let $\alpha: x \rightarrow y$ be an unfactorizable morphism. Then $h\alpha g$ is also unfactorizable for every $h \in \text{Aut}_{\mathcal{C}}(y)$ and every $g \in \text{Aut}_{\mathcal{C}}(x)$.
\end{proposition}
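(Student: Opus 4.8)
The plan is to reduce any purported factorization of $h\alpha g$ to a factorization of $\alpha$ itself, and then invoke the hypothesis that $\alpha$ is unfactorizable. Throughout I keep in mind the paper's convention that composition is written from right to left, so that $g \in \text{Aut}_{\mathcal{C}}(x)$ is applied first and $h \in \text{Aut}_{\mathcal{C}}(y)$ last; since $g: x \to x$, $\alpha: x \to y$ and $h: y \to y$, the composite $h\alpha g$ is indeed a morphism $x \to y$.

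First I would verify that $h\alpha g$ is not an isomorphism. If it were, then $\alpha = h^{-1}(h\alpha g)g^{-1}$ would be a composite of three isomorphisms, hence itself an isomorphism, contradicting the unfactorizability (and in particular the non-invertibility) of $\alpha$.

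Next, I would take an arbitrary factorization $h\alpha g = \gamma\beta$ with $\beta: x \to w$ and $\gamma: w \to y$, and absorb the two automorphisms into the factors. Composing on the left by $h^{-1}$ and on the right by $g^{-1}$ gives $\alpha = (h^{-1}\gamma)(\beta g^{-1})$. Here $\beta g^{-1}: x \to w$ and $h^{-1}\gamma: w \to y$, so this is a genuine factorization of $\alpha$ through $w$. Since $\alpha$ is unfactorizable, either $h^{-1}\gamma$ or $\beta g^{-1}$ is an isomorphism. In the former case $\gamma = h(h^{-1}\gamma)$ is an isomorphism; in the latter case $\beta = (\beta g^{-1})g$ is an isomorphism. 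In either case one of $\beta$, $\gamma$ is invertible, and together with the first step this shows $h\alpha g$ is unfactorizable.

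I do not expect a genuine obstacle: the argument is a direct conjugation/absorption of the automorphisms into the two factors, and the only point requiring care is bookkeeping the composition order, so that $h^{-1}$ and $g^{-1}$ attach to the correct sides of the factorization. This statement can be viewed as the formal counterpart of the preceding remark that composing an unfactorizable morphism with an isomorphism preserves unfactorizability; indeed one could alternatively argue in two steps (first treat $h\alpha$, then $\alpha g$), but the single absorption step above is cleaner and handles both automorphisms at once.
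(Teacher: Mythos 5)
Your proof is correct and follows essentially the same route as the paper's: given a factorization $h\alpha g = \gamma\beta$, absorb the automorphisms to obtain $\alpha = (h^{-1}\gamma)(\beta g^{-1})$ and invoke the unfactorizability of $\alpha$. Your additional opening step checking that $h\alpha g$ is not an isomorphism is a small point the paper leaves implicit, but it is part of the definition and worth including.
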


\begin{proof}
Fix a decomposition $h\alpha g=\alpha_1 \alpha_2$. Then we have $\alpha = (h^{-1} \alpha_1) (\alpha_2 g^{-1})$. But $\alpha$ is unfactorizable, so by definition either one of $h^{-1}\alpha_1$ and $\alpha_2 g^{-1}$ is an isomorphism. Without loss of generality, let $h^{-1}\alpha_1$ be an isomorphism. Then $\alpha_1$ is an isomorphism since $h^{-1}$ is an automorphism.
\end{proof}

Let $\mathcal{C}$ be a finite EI category. By the previous proposition, the set of unfactorizable morphisms from an object $x$ to another object $y$ is closed under the actions of $\text{Aut}_{\mathcal{C}}(x)$ and $\text{Aut}_{\mathcal{C}}(y)$. Choose a fixed representative for each (Aut$_{\mathcal{C}} (y)$,Aut$_{\mathcal{C}} (x)$)-orbit. Repeating this process for all pairs of different objects $(x,y)$, we get a set $A =\{ \alpha_1, \ldots, \alpha_n\}$ of orbit representatives. Elements in $A$ are called \textit{representative unfactorizable morphisms}.\

We should point out here that each finite EI category $\mathcal{C}$ determines a finite EI quiver $\hat{Q}$ in the following way: its vertices are objects in $\mathcal{C}$; we put an arrow $x: \rightarrow y$ in $\hat{Q}$ for each representative unfactorizable morphism $\alpha: x \rightarrow y$ in $\mathcal{C}$. Thus the arrows biject with all representative unfactorizable morphisms $\alpha: x \rightarrow y$ in $\mathcal{C}$, or equivalently, all Aut$ _{\mathcal{C}} (y) \times \text{Aut} _{\mathcal{C}} (x)$-orbits of unfactorizable morphisms in $\mathcal{C}$. The map $f$ assigns End$_{\mathcal{C}} (x)$ to each object $x$; the map $g$ assigns the (Aut$_{\mathcal{C}} (y)$, Aut$_{\mathcal{C}} (x)$)-biset where a representative unfactorizable morphism $\alpha: x \rightarrow y$ lies to the corresponding arrow. Obviously, this finite EI quiver is unique up to isomorphism. We call this quiver the \textit{finite EI quiver} of $\mathcal{C}$.\

Now suppose that $\mathcal{C}$ is a finite free EI category. It is possible that there is more than one finite EI quiver generating $\mathcal{C}$, although they must have the same vertices. However, it is not hard to see that all those finite EI quivers are subquivers of the finite EI quiver of $\mathcal{C}$.\

All non-isomorphisms can be written as composites of unfactorizable morphisms.
\begin{proposition}
Let $\alpha: x \rightarrow y$ be a morphism with $x \neq y$. Then it has a decomposition \xymatrix{x=x_0 \ar[r]^{\alpha_1} & x_1 \ar[r]^{\alpha_2} & \ldots \ar[r]^{\alpha_n} & x_n=y}, where all $\alpha_i$ are unfactorizable.
\end{proposition}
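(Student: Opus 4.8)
The plan is to argue by induction on a well-founded measure of ``distance'' in the poset of objects. Since $\mathcal{C}$ is connected and skeletal, the discussion following the Convention shows that the relation $x \preceq y \iff \text{Hom}_{\mathcal{C}}(x,y) \neq \emptyset$ is a partial order on $\text{Ob}(\mathcal{C})$: it is reflexive and transitive by the existence of identities and of composites, and it is antisymmetric because the case of distinct $x, y$ with both $\text{Hom}_{\mathcal{C}}(x,y)$ and $\text{Hom}_{\mathcal{C}}(y,x)$ nonempty was already ruled out there. As $\mathcal{C}$ has only finitely many objects, I can define, for any $x \prec y$, a quantity $d(x,y)$ equal to the length of a longest strictly increasing chain $x = z_0 \prec z_1 \prec \cdots \prec z_m = y$; antisymmetry guarantees such chains are strictly increasing and hence bounded, so $d(x,y)$ is a well-defined positive integer.

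The key observation about nontrivial factorizations is this: if $\alpha = \gamma\beta$ with $\beta \colon x \to z$ and $\gamma \colon z \to y$, and neither $\beta$ nor $\gamma$ is an isomorphism, then $z \neq x$ and $z \neq y$. Indeed, if $z = x$ then $\beta \in \text{End}_{\mathcal{C}}(x)$ would be an isomorphism by the EI condition, contrary to assumption; the case $z = y$ is symmetric. Hence $x \prec z \prec y$, and concatenating longest chains from $x$ to $z$ and from $z$ to $y$ yields $d(x,z) < d(x,y)$ and $d(z,y) < d(x,y)$. I would then induct on $d(x,y)$. In the base case $d(x,y) = 1$ no object lies strictly between $x$ and $y$, so by the observation $\alpha$ admits no nontrivial factorization and is therefore unfactorizable, giving a length-one decomposition. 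In the inductive step, if $\alpha$ is already unfactorizable we are done; otherwise write $\alpha = \gamma\beta$ nontrivially as above, apply the induction hypothesis to $\beta$ and to $\gamma$ (legitimate since their distances are strictly smaller), and concatenate the two resulting chains of unfactorizable morphisms to express $\alpha$ as desired.

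The only real obstacle is making the measure genuinely well-founded, and this rests on exactly two inputs already available: antisymmetry of $\preceq$ (so that chains strictly increase and are capped by the finite number of objects) and the EI property (forcing any intermediate object of a nontrivial factorization to be strictly between $x$ and $y$). Once these are in place the induction is purely formal, and the remaining steps are routine bookkeeping about composing the two sub-decompositions.
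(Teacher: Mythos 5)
Your proof is correct, and it reaches the same conclusion by a more formal route than the paper. The paper's proof is an iterative refinement: if $\alpha$ is not unfactorizable, split it into two non-isomorphisms, note the intermediate object is new (by the EI property, exactly your ``key observation''), keep splitting, and appeal to the finiteness of $\mathrm{Ob}(\mathcal{C})$ plus the pairwise distinctness of all objects appearing in the chain to conclude that the process terminates. Your version packages the same two ingredients --- the EI condition forcing intermediate objects of a nontrivial factorization to lie strictly between $x$ and $y$, and antisymmetry of the ordering coming from the skeletal hypothesis --- into a well-founded induction on the longest-chain length $d(x,y)$. What this buys is rigor at the point where the paper is most informal: the assertion ``this process ends after finitely many steps'' implicitly requires that every object produced at every stage of the (tree-like) refinement remains distinct from all previous ones, which the paper only gestures at with ``with the same reasoning.'' Your rank function makes termination automatic and turns the argument into a clean strong induction, at the cost of setting up the poset structure and the chain-length measure explicitly. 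One cosmetic remark: in the base case you should note (as your setup already implies) that $\alpha$ is not an isomorphism, since under the skeletal convention a morphism between distinct objects cannot be an isomorphism; this is needed to match the definition of unfactorizable, which excludes isomorphisms.
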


\begin{proof}
If $\alpha$ is unfactorizable, we are done. Otherwise, $\alpha$ has a decomposition \xymatrix{x \ar[r]^{\alpha_1} & x_1 \ar[r]^{\alpha_2} & y} where neither $\alpha_1$ nor $\alpha_2$ is an isomorphism. In particular, $x_1$ is different from $x$ and $y$. If both $\alpha_1$ and $\alpha_2$ are unfactorizable, we are done. Otherwise, assume $\alpha_1$ is not unfactorizable. Repeating the above process, we can get a decomposition \xymatrix{x \ar[r] ^{\alpha_{11}} & x_{11} \ar[r]^{\alpha_{12}} & x_1 \ar[r]^{\alpha_2} & y}. With the same reasoning, $x, x_{11}, x_1, y$ are pairwise different. Since there are only finitely many objects, this process ends after finitely many steps. Therefore we get a decomposition of $\alpha$ into unfactorizable morphisms.
\end{proof}

For an arbitrary finite EI category $\mathcal{C}$, the ways to decompose a non-isomorphism into unfactorizable morphisms need not to be unique. However, we can show that for finite free EI categories, this decomposition is unique up to a trivial relation, i.e., they satisfy the property defined below:
\begin{definition}
A finite EI category $\mathcal{C}$ satisfies the Unique Factorization Property (UFP) if whenever any non-isomorphism $\alpha$ has two decompositions into unfactorizable morphisms:
\begin{equation*}
\xymatrix{x=x_0 \ar[r]^{\alpha_1} & x_1 \ar[r]^{\alpha_2} & \ldots \ar[r]^{\alpha_m} &x_m=y \\
x=x_0 \ar[r]^{\beta_1} & y_1 \ar[r]^{\beta_2} & \ldots \ar[r]^{\beta_n} & y_n=y}
\end{equation*}
then $m=n$, $x_i = y_i$, and there are $ h_i \in \text{Aut}_{\mathcal{C}} (x_i)$, $1 \leqslant i \leqslant n-1$ such that $\beta_1 = h_1 \alpha_1$, $\beta_2 =h_2 \alpha_2 h_1^{-1}$, \ldots, $\beta_{n-1} = h_{n-1} \alpha_{n-1} h_{n-2}^{-1}$, $\beta_n =\alpha_n h_{n-1}^{-1}$.
\end{definition}

The UFP gives a characterization of finite free EI categories.
\begin{proposition}
A finite EI category $\mathcal{C}$ is free if and only if it satisfies the UFP.
\end{proposition}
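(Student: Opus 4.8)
The plan is to prove both implications by matching the combinatorial data of a finite free EI category against the automorphism relations in the UFP. Throughout, recall from the construction above that in any $\mathcal{C}_{\hat{P}}$ the Hom sets are \emph{disjoint unions} $\bigsqcup_{\gamma} H_{\gamma}$ over paths, that composition corresponds to concatenation of paths realized by the biset product map $p$, and that $\hat{Q}$ denotes the canonical finite EI quiver of $\mathcal{C}$ defined above (vertices $\mathrm{Ob}(\mathcal{C})$, arrows the representative unfactorizable morphisms, $f(x)=\mathrm{End}_{\mathcal{C}}(x)$, and $g$ the orbit bisets).

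First I would settle the forward direction: if $\mathcal{C}\cong\mathcal{C}_{\hat{P}}$ for some finite EI quiver $\hat{P}$, then $\mathcal{C}$ satisfies the UFP. The first step is to identify the unfactorizable morphisms of $\mathcal{C}_{\hat{P}}$ as exactly the elements of the bisets $g(\alpha)$ attached to single arrows, i.e. the morphisms lying in some $H_{\gamma}$ for a path $\gamma$ of length one. Indeed, since the quiver is acyclic, a class $[b_n,\dots,b_1]\in H_{\gamma}$ with $\gamma$ of length $n\geq 2$ factors as $(b_n\circ\cdots\circ b_2)\circ b_1$ through an intermediate vertex distinct from both the source and the target, so it is factorizable; conversely an element of $g(\alpha)$ admits no factorization into two non-isomorphisms, because any such composite would land in the component $H_{\gamma'}$ of a strictly longer path, which is disjoint from $g(\alpha)=H_{\alpha}$. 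The decisive observation is then that if $\mu\in H_{\gamma}$ is written as a composite of unfactorizables $c_m\circ\cdots\circ c_1$ with $c_j\in g(\delta_j)$, disjointness forces $\delta_m\cdots\delta_1=\gamma$, so any two unfactorizable decompositions of $\mu$ traverse the same arrows, and thus agree in length and intermediate objects. Finally, within a fixed $H_{\gamma}$ the defining identifications of the iterated product $g(\alpha_n)\times_{f(v_{n-1})}\cdots\times_{f(v_1)}g(\alpha_1)$ are precisely $(\dots,b_{i+1}h_i,b_i,\dots)\sim(\dots,b_{i+1},h_ib_i,\dots)$ for $h_i\in f(v_i)$, which is verbatim the relation $\beta_i=h_i\alpha_i h_{i-1}^{-1}$ demanded by the UFP. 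This establishes the UFP.

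For the converse, assume $\mathcal{C}$ satisfies the UFP; I would show $\mathcal{C}\cong\mathcal{C}_{\hat{Q}}$ using the canonical quiver. Define $\Phi\colon\mathcal{C}_{\hat{Q}}\to\mathcal{C}$ to be the identity on objects and on endomorphism groups, and on a class $[\alpha_n,\dots,\alpha_1]\in H_{\gamma}$ to be the composite $\alpha_n\circ\cdots\circ\alpha_1$ formed in $\mathcal{C}$, each $\alpha_i$ being an unfactorizable morphism of $\mathcal{C}$. It is well defined because the biset relation $(\alpha_{i+1}h,\alpha_i)\sim(\alpha_{i+1},h\alpha_i)$ maps to an identity of composites in $\mathcal{C}$ by associativity, and it is a functor because composition in $\mathcal{C}_{\hat{Q}}$ is concatenation. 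Surjectivity onto each $\mathrm{Hom}_{\mathcal{C}}(x,y)$ follows from the proposition that every non-isomorphism factors into unfactorizables, each of which lies in some orbit biset $g(\alpha_i)$. Injectivity is where the hypothesis is used: if two classes map to the same morphism, the UFP yields equal lengths, equal intermediate objects, and automorphisms $h_i$ with $\beta_i=h_i\alpha_i h_{i-1}^{-1}$; since each $g(\alpha_i)$ is closed under the two-sided automorphism action, both decompositions traverse the same arrows of $\hat{Q}$, hence lie in the same $H_{\gamma}$, and there the relations $\beta_i=h_i\alpha_i h_{i-1}^{-1}$ are exactly the biset-product identifications, so the two classes coincide. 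Thus $\Phi$ is an isomorphism of categories and $\mathcal{C}$ is free.

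I expect the main obstacle to be the bookkeeping identifying the UFP's chain $\beta_i=h_i\alpha_i h_{i-1}^{-1}$ with the successive equivalence relations of the iterated biset product, and in particular verifying in the converse that the UFP forces the two factorizations into the \emph{same} $H_{\gamma}$ rather than merely into factorizations of equal length. Aligning the endpoint indexing ($\beta_1=h_1\alpha_1$ and $\beta_n=\alpha_n h_{n-1}^{-1}$) with the boundary of the product is the one genuinely fiddly point; everything else is formal once the unfactorizable morphisms are recognized as the length-one biset elements and composition is recognized as path concatenation.
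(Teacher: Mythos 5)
Your proof is correct and follows essentially the same route as the paper's: the forward direction uses the disjointness of $\mathrm{Hom} = \bigsqcup_{\gamma} H_{\gamma}$ to force both unfactorizable decompositions onto the same path and then reads the UFP relations off the iterated biset-product identifications, while the converse builds an isomorphism between $\mathcal{C}$ and the category generated by its canonical EI quiver, using Proposition 2.6 for surjectivity and the UFP for injectivity. The only differences are cosmetic: you run the functor in the direction $\mathcal{C}_{\hat{Q}} \to \mathcal{C}$ rather than $\mathcal{C} \to \mathcal{C}_{\hat{Q}}$, and you spell out (via acyclicity and disjointness) the identification of unfactorizable morphisms with length-one biset elements, which the paper dismisses as ``easy to see from definitions.''
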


\begin{proof}
Suppose $\mathcal{C}$ is a finite free EI category generated by a finite EI quiver $\hat{Q}= (Q_0, Q_1, s, t, f, g)$. Let $\alpha: v \rightarrow w$ be an arbitrary non-isomorphism. By the previous proposition $\alpha$ can be written as a composite of unfactorizable morphisms. Let $\alpha_m \circ \ldots \circ \alpha_1$ and $\beta_n \circ \ldots \circ \beta_1$ be two such decompositions of $\alpha$. It is easy to see from definitions that an unfactorizable morphism in $\mathcal{C}$ lies in $g(\tau)$ for some unique arrow $\tau \in Q_1$. Thus $\alpha_m \circ \ldots \circ \alpha_1$ and $\beta_n \circ \ldots \circ \beta_1$ determine two paths $\gamma_1$ and $\gamma_2$ in $\hat{Q}$ from $v$ to $w$. But $\alpha$ is contained in Hom$_{\mathcal{C}} (v,w) = \bigsqcup _{\gamma} H_{\gamma}$, the disjoint union taken over all possible paths from $v$ to $w$, so $\gamma_1$ must be the same as $\gamma_2$. Consequently, $m =n$, and $\alpha_i$ and $\beta_i$ have the same target and source for $1 \leqslant i \leqslant n$. By the definition of biset product, the fact
\begin{equation*}
\alpha_n \circ (\alpha_{n-1} \ldots \circ \alpha_1) = \beta_n \circ (\beta_{n-1} \circ \ldots \circ \beta_1)
\end{equation*}
in the biset product implies that there is an automorphisms $g_{n-1} \in \text{Aut} _{\mathcal{C}} (x_{n-1})$ such that
\begin{equation*}
\alpha_n = \beta_n g_{n-1}, \quad (\alpha_{n-1} \circ \ldots \circ \alpha_1) = g_{n-1}^{-1} (\beta_{n-1} \circ \ldots \circ \beta_1),
\end{equation*}
where $x_{n-1}$ is the common target of $\alpha_{n-1}$ and $\beta_{n-1}$. By an easy induction on $n$, we show that $\{ \alpha_i \}_{i=1}^n$ and $\{ \beta_j \}_{j=1}^n$ have the required relations in the previous definition. Thus $\mathcal{C}$ satisfies the UFP.\

On the other hand, if $\mathcal{C}$ satisfies the UFP,  we want to show that $\mathcal{C}$ is isomorphic to the finite free EI category $\mathcal{C_{\hat{Q}}}$ generated from its finite EI quiver $\hat{Q}$. Define a functor $F: \mathcal{C} \rightarrow \mathcal{C_{\hat{Q}}}$ in the following way: First, $F(x) = x$ for every object $x$ in $\mathcal{C}$ since Ob$(\mathcal{C}) = \text{Ob} (\mathcal{C_{\hat{Q}}})$ by our construction. Furthermore, it is also clear that $\text{Aut}_{\mathcal{C}}(x) = \text{Aut} \mathcal{C_{\hat{Q}}} (x)$ for every object $x$, and the biset of unfactorizable morphisms from $x$ to $y$ in $\mathcal{C}$ is the same as that in $\mathcal{C_{\hat{Q}}}$ for every pair of different objects $x$ and $y$. Therefore we can let $F$ be the identity map restricted to automorphisms and unfactorizable morphisms in $\mathcal{C}$. Proposition 2.6 tells us that every non-isomorphism $\alpha$ in $\mathcal{C}$ is a composite $\alpha_n \circ \ldots \circ \alpha_1$ of unfactorizable morphisms, so $F(\alpha)$ can be defined as $F(\alpha_n) \circ \ldots \circ F(\alpha_1)$. By the UFP $F$ is well-defined and is a bijection restricted to Hom$_{\mathcal{C}} (x, y)$ for each pair of distinct objects $x$ and $y$. Consequently, $F$ is a bijection from Mor($\mathcal{C}$) to Mor$ (\mathcal{C_{\hat{Q}}})$ and so is an isomorphism. This finishes the proof.
\end{proof}

Finite free EI categories have a certain universal property which is stated in the following proposition:

\begin{proposition}
Let $\mathcal{C}$ be a finite EI category. Then there is a finite free EI category $\hat{\mathcal{C}}$ and a full functor $\hat{F}: \hat{\mathcal{C}} \rightarrow \mathcal{C}$ such that $\hat{F}$ is the identity map restricted to objects, isomorphisms and unfactorizable morphisms. This finite free EI category is unique up to isomorphism.
\end{proposition}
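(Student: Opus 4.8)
The plan is to take for $\hat{\mathcal{C}}$ the free EI category generated by the finite EI quiver of $\mathcal{C}$ itself. Recall that $\mathcal{C}$ determines a finite EI quiver $\hat{Q}$ whose vertices are the objects of $\mathcal{C}$, whose arrows biject with the $(\mathrm{Aut}_{\mathcal{C}}(y),\mathrm{Aut}_{\mathcal{C}}(x))$-orbits of unfactorizable morphisms $x\to y$, and whose structure maps $f,g$ record the endomorphism groups and the orbit bisets. Setting $\hat{\mathcal{C}}=\mathcal{C}_{\hat{Q}}$, Proposition 2.8 guarantees that $\hat{\mathcal{C}}$ is a finite free EI category satisfying the UFP. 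By construction $\hat{\mathcal{C}}$ and $\mathcal{C}$ have the same objects, the equalities $\mathrm{Aut}_{\hat{\mathcal{C}}}(x)=\mathrm{Aut}_{\mathcal{C}}(x)$ for every object $x$, and the biset of unfactorizable morphisms $x\to y$ in $\hat{\mathcal{C}}$ (the elements of the relevant $g(\tau)$'s) coincides with that in $\mathcal{C}$.

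Next I would define $\hat{F}$ to be the identity on objects, on the automorphism groups, and on unfactorizable morphisms, and then extend to arbitrary non-isomorphisms by composition: given $\phi\in\mathrm{Hom}_{\hat{\mathcal{C}}}(x,y)$ with $x\neq y$, write $\phi=\alpha_n\circ\cdots\circ\alpha_1$ with each $\alpha_i$ unfactorizable (possible by Proposition 2.6) and set $\hat{F}(\phi)=\alpha_n\circ\cdots\circ\alpha_1$, the composite now evaluated in $\mathcal{C}$. The essential point is well-definedness. In $\hat{\mathcal{C}}$ any two such factorizations of $\phi$ agree only up to the trivial relations of Definition 2.7, i.e. they differ by inserting automorphisms $h_i h_i^{-1}$; since composition in $\mathcal{C}$ is associative and these automorphisms are genuine morphisms of $\mathcal{C}$, the two resulting composites in $\mathcal{C}$ coincide. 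Concretely this amounts to checking that the coequalizer identifications $\times_{f(v_i)}$ defining the biset products in $\hat{\mathcal{C}}$ are also respected by honest composition in $\mathcal{C}$. Functoriality then follows by concatenating factorizations, noting that by acyclicity of $\hat{Q}$ a composite of unfactorizable morphisms between distinct objects is again a non-isomorphism, so no case distinction with isomorphisms or with the zero product is needed.

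For fullness I would argue Hom-set by Hom-set. An isomorphism of $\mathcal{C}$ is its own preimage, since $\hat{F}$ is the identity on automorphisms. For a non-isomorphism $\alpha\in\mathrm{Hom}_{\mathcal{C}}(x,y)$, Proposition 2.6 provides a factorization $\alpha=\alpha_n\circ\cdots\circ\alpha_1$ into unfactorizable morphisms; the identical sequence of arrows defines a morphism $\phi$ of $\hat{\mathcal{C}}$ with $\hat{F}(\phi)=\alpha$. Hence $\hat{F}$ is surjective on every $\mathrm{Hom}_{\hat{\mathcal{C}}}(x,y)\to\mathrm{Hom}_{\mathcal{C}}(x,y)$, which is precisely fullness. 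For uniqueness, suppose $\mathcal{D}$ is another finite free EI category with a full functor $G\colon\mathcal{D}\to\mathcal{C}$ that is the identity on objects, isomorphisms and unfactorizable morphisms. Then $\mathcal{D}$ has the same objects as $\mathcal{C}$, the same automorphism groups, and, because $G$ is the identity on unfactorizable morphisms and is full, the same bisets of unfactorizable morphisms; that is, the finite EI quiver of $\mathcal{D}$ is isomorphic to $\hat{Q}$. Since by Proposition 2.8 a finite free EI category is determined up to isomorphism by a generating finite EI quiver, we conclude $\mathcal{D}\cong\mathcal{C}_{\hat{Q}}=\hat{\mathcal{C}}$.

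I expect the well-definedness of $\hat{F}$ on non-isomorphisms to be the main obstacle: it is exactly the place where the UFP of the free category $\hat{\mathcal{C}}$ must be matched against the actual composition in $\mathcal{C}$, and one must verify carefully that the trivial relations relating two factorizations are preserved after applying $\hat{F}$. Everything else reduces to bookkeeping with the construction of $\mathcal{C}_{\hat{Q}}$ together with Propositions 2.6 and 2.8.
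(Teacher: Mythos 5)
Your proposal is correct and follows essentially the same route as the paper: take $\hat{\mathcal{C}} = \mathcal{C}_{\hat{Q}}$ for the finite EI quiver $\hat{Q}$ of $\mathcal{C}$, let $\hat{F}$ be the identity on objects, isomorphisms and unfactorizable morphisms, extend it to all non-isomorphisms via factorizations (Proposition 2.6), prove well-definedness from the UFP, get fullness because every morphism of $\mathcal{C}$ is a composite of unfactorizable ones, and get uniqueness because the generating data (objects, automorphism groups, bisets of unfactorizable morphisms) is determined by $\mathcal{C}$. If anything you are slightly more careful than the paper on the key step: the UFP invoked for well-definedness is that of the free category $\hat{\mathcal{C}}$ (valid by Proposition 2.8), whereas the paper's text momentarily writes that ``$\mathcal{C}$ satisfies the UFP,'' which a general finite EI category need not.
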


We call $\hat{\mathcal{C}}$ the \textit{free EI cover} of $\mathcal{C}$.\

\begin{proof} We mentioned before that every finite EI category $\mathcal{C}$ determines a finite EI quiver $\hat{Q}$ (see the paragraphs before Proposition 2.6), hence a finite free EI category $\mathcal{C_{\hat{Q}}}$ satisfying: Ob$(\mathcal{C}) = \text{Ob} (\mathcal{C_{\hat{Q}}})$; $\text{Aut}_{\mathcal{C}}(x) = \text{Aut} \mathcal{C_{\hat{Q}}} (x)$ for every object $x$; the biset of unfactorizable morphisms from $x$ to $y$ in $\mathcal{C}$ is the same as that in $\mathcal{C_{\hat{Q}}}$ for every pair of different objects $x$ and $y$.\

Define a functor $\hat{F}: \mathcal{C_{\hat{Q}}} \rightarrow \mathcal{C}$ in the following way: $F$ is the identity map on objects, isomorphisms and unfactorizable morphisms. Now if $\delta: x \rightarrow y$ is neither an isomorphism nor an unfactorizable morphism, it can be decomposed as the composite
\begin{equation*}
\xymatrix{x=x_0 \ar[r]^{\beta_1} & x_1 \ar[r]^{\beta_2} & \ldots \ar[r]^{\beta_m} &x_m=y,}
\end{equation*}
where each $\beta_i$ is unfactorizable for $1 \leqslant i \leqslant m$. Define
\begin{equation*}
\hat{F}(\delta) = \hat{F}(\beta_m) \ldots \hat{F}(\beta_2) \hat{F}(\beta_1).
\end{equation*}

$\hat{F}$ is clearly well defined for all isomorphisms and unfactorizable morphisms. We want to verify that $\hat{F}$ is well defined for factorizable morphisms as well. That is, if $\delta$ has another decomposition into unfactorizable morphisms
\begin{equation*}
\xymatrix{x=x_0 \ar[r]^{\beta'_1} & z_1 \ar[r]^{\beta_2} & \ldots \ar[r]^{\beta'_n} &z_n=y,}
\end{equation*}
then
\begin{equation*}
\hat{F}(\beta_n) \hat{F}(\beta_{n-1}) \ldots \hat{F}(\beta_1) = \hat{F}(\beta'_n) \ldots \hat{F}(\beta'_2) \hat{F}(\beta'_1).
\end{equation*}
Since $\mathcal{C}$ satisfies the UFP, we have $m=n$ and $x_i = z_i$ for $1 \leqslant i \leqslant n$, and $\beta_1 = h_1 \beta'_1$, $\beta_2 =h_2 \beta'_2 h_1^{-1}, \ldots, \beta_{n-1} = h_{n-1} \beta'_{n-1} h_{n-2}^{-1}$, $\beta_n =\beta'_n h_{n-1}^{-1}$, where $h_i \in \text{Aut}_{\mathcal{C}} (x_i)$. Thus:
\begin{align*}
& \hat{F}(\beta_n) \hat{F}(\beta_{n-1}) \ldots \hat{F}(\beta_1) \\
& = \hat{F}(\beta'_n h_{n-1}^{-1}) \hat{F}(h_{n-1} \beta'_{n-1} h_{n-2}^{-1}) \ldots \hat{F}(h_1 \beta'_1) \\
& = \hat{F}(\beta'_n) \hat{F}(h^{-1}_{n-1}) \hat{F}(h_{n-1}) \ldots \hat{F}(h_1^{-1}) \hat{F}(h_1) \hat{F}(\beta'_1) \\
& = \hat{F}(\beta'_n) \ldots \hat{F}(\beta'_2) \hat{F}(\beta'_1).
\end{align*}

Therefore, $\hat{F}$ is a well defined functor. It is full since all automorphisms and unfactorizable morphisms in $\mathcal{C}$ are images of $\hat{F}$, and all other morphisms in $\mathcal{C}$ are their composites. Moreover, $\hat{\mathcal{C}}$ is unique up to isomorphism since it is completely determined by objects, isomorphisms and unfactorizable morphisms in $\mathcal{C}$.\
\end{proof}

It is well-known that every subgroup of a free group is still free. Finite free EI categories have a similar property.

\begin{proposition}
Let $\mathcal{C}$ be a finite EI category. Then $\mathcal{C}$ is a finite free EI category if and only if all of its full subcategories are finite free EI categories.
\end{proposition}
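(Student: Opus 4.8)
The plan is to characterize freeness through the Unique Factorization Property (Proposition 2.8) and to transfer the UFP from $\mathcal{C}$ to its full subcategories by refining factorizations. The backward implication is immediate: since $\mathcal{C}$ is a full subcategory of itself, if all full subcategories are free then in particular $\mathcal{C}$ is free. So the entire content lies in the forward direction: assuming $\mathcal{C}$ satisfies the UFP, I must show that an arbitrary full subcategory $\mathcal{D}$ does as well. First I note that $\mathcal{D}$ is automatically skeletal, since distinct objects of $\mathcal{C}$ are non-isomorphic, and that fullness gives $\text{Hom}_{\mathcal{D}}(x,y) = \text{Hom}_{\mathcal{C}}(x,y)$ and $\text{Aut}_{\mathcal{D}}(x) = \text{Aut}_{\mathcal{C}}(x)$ for all objects $x,y$ of $\mathcal{D}$. (If $\mathcal{D}$ is disconnected one works on each connected component separately, each being a full subcategory, so I may assume $\mathcal{D}$ connected.)

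The crucial observation is the relation between unfactorizable morphisms in $\mathcal{D}$ and factorizations in $\mathcal{C}$. I claim that a non-isomorphism $\beta\colon x \to z$ of $\mathcal{D}$ is unfactorizable in $\mathcal{D}$ if and only if, in its factorization into $\mathcal{C}$-unfactorizable morphisms $x = c_0 \to c_1 \to \cdots \to c_k = z$ (which exists by Proposition 2.6 and whose object sequence is essentially unique by the $\mathcal{C}$-UFP), none of the intermediate objects $c_1, \ldots, c_{k-1}$ lie in $\mathcal{D}$. Indeed, if some $c_j$ with $0 < j < k$ lies in $\mathcal{D}$, then fullness lets me split $\beta$ inside $\mathcal{D}$ as a composite of the two non-isomorphisms $x \to c_j$ and $c_j \to z$, contradicting $\mathcal{D}$-unfactorizability; conversely any $\mathcal{D}$-factorization of $\beta$ produces, via the $\mathcal{C}$-UFP, an intermediate $\mathcal{D}$-object along the $\mathcal{C}$-factorization. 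Only the forward direction is needed below, but both are clarifying.

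With this in hand I verify the UFP for $\mathcal{D}$. Given two decompositions of a $\mathcal{D}$-morphism $\alpha\colon x \to y$ into $\mathcal{D}$-unfactorizable morphisms, I refine each $\mathcal{D}$-unfactorizable factor into its factorization into $\mathcal{C}$-unfactorizable morphisms. By the claim the intermediate objects created inside each refined block lie outside $\mathcal{D}$, so concatenation yields two factorizations of $\alpha$ into $\mathcal{C}$-unfactorizable morphisms whose $\mathcal{D}$-objects are exactly the objects of the two original $\mathcal{D}$-decompositions, occurring in the same order along the chain. Applying the UFP of $\mathcal{C}$ to the two refined chains forces equal total length and identical object sequences; restricting to the $\mathcal{D}$-objects gives $m=n$, $x_i = y_i$, and matching block lengths, so $x_i$ sits at the same position $P_i$ in both refined chains. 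Finally I translate the automorphism relations: writing the refined chains as $a_1,\dots,a_N$ and $b_1,\dots,b_N$, the $\mathcal{C}$-UFP supplies automorphisms $g_1,\dots,g_{N-1}$ at the intermediate objects with $b_\ell = g_\ell a_\ell g_{\ell-1}^{-1}$ and $g_0 = g_N = \mathrm{id}$. Setting $h_i = g_{P_i} \in \text{Aut}_{\mathcal{D}}(x_i)$, the interior $g$'s telescope within each block to give $\beta_i = h_i \alpha_i h_{i-1}^{-1}$ with $h_0 = h_n = \mathrm{id}$. This is precisely the $\mathcal{D}$-UFP, so $\mathcal{D}$ is free by Proposition 2.8.

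The main obstacle is conceptual rather than computational: a morphism can be unfactorizable in $\mathcal{D}$ yet factorizable in $\mathcal{C}$ through objects absent from $\mathcal{D}$, so the notions ``$\mathcal{D}$-unfactorizable'' and ``$\mathcal{C}$-unfactorizable'' genuinely differ, and the UFP cannot simply be inherited verbatim. The refinement step is what reconciles them, and the only delicate points are confirming that the $\mathcal{D}$-objects occupy matching positions in the two refined chains and that the telescoping of the $g_\ell$ across each block produces exactly the boundary automorphisms $h_{i-1}, h_i$ demanded by the UFP of $\mathcal{D}$.
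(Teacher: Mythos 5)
Your proposal is correct and follows essentially the same route as the paper: both arguments refine the two $\mathcal{D}$-decompositions into chains of $\mathcal{C}$-unfactorizable morphisms, invoke the UFP of $\mathcal{C}$ (Proposition 2.8) to force equal lengths and identical object sequences, and use fullness of $\mathcal{D}$ to show that a $\mathcal{D}$-unfactorizable morphism cannot pass through an intermediate object of $\mathcal{D}$. The only difference is organizational: you isolate that last point as a standalone claim, so the matching of the block boundaries (your positions $P_i$) follows directly from the identity of the object sequences, whereas the paper establishes the same fact by contradiction at the first position where the two sequences of $\mathcal{D}$-objects differ; you also spell out the telescoping of the automorphisms $g_\ell$ into the required $h_i$, which the paper leaves implicit.
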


\begin{proof}
The if part is trivial since $\mathcal{C}$ is such a subcategory of itself. Now let $\mathcal{D}$ be a full subcategory of $\mathcal{C}$. We want to show that $\mathcal{D}$ satisfies the UFP.\

Take a factorizable morphism $\alpha$ in $\mathcal{D}$ and two decompositions of the following form:
\begin{equation*}
\xymatrix{x=x_0 \ar[r]^{\alpha_1} & x_1 \ar[r]^{\alpha_2} & \ldots \ar[r]^{\alpha_m} &x_m=y \\
x=x_0 \ar[r]^{\beta_1} & x'_1 \ar[r]^{\beta_2} & \ldots \ar[r]^{\beta_n} & x'_n=y}
\end{equation*}
where all $\alpha_i$ and $\beta_j$ are unfactorizable in $\mathcal{D}$, but possibly factorizable in $\mathcal{C}$. Decomposing them into unfactorizable morphisms in $\mathcal{C}$, we get two extended sequences of unfactorizable morphisms as follows:

\begin{equation*}
\xymatrix{x=x_0 \ar[r]^{\delta_1} & w_1 \ar[r]^{\delta_2} & \ldots \ar[r]^{\delta_r} &w_r=y \\
x=x_0 \ar[r]^{\theta_1} & w_1 \ar[r]^{\theta_2} & \ldots \ar[r]^{\theta_r} & w_r=y}
\end{equation*}
Each pair of morphisms $\delta_i$ and $\theta_i$ have the same source and target since $\mathcal{C}$ is a finite free EI category. Moreover, there are $h_i \in \text{Aut} _{\mathcal{C}} (w_i)$, $1 \leqslant i \leqslant r-1$, such that $\theta_1 = h_1 \delta_1$, $\theta_2 = h_2 \delta_2 h_1^{-1}$,  $\ldots$, $\theta_{r-1} = h_{r-1} \delta_{r-1} h_{r-2}^{-1}$, $\theta_r = h_{r-1}^{-1} \delta_r$. If we can prove the fact that $m=n$ and $x_1 = x_1'$, $\ldots$, $x_m = x_n'$, then the conclusion follows. Indeed, if this is true, say $x_1 = x_1' = w_{r_1}$, $\ldots$, $x_m = x'_m = w_{r_m}$, then we have $\beta_1 = h_{r_1} \alpha_1$, $\beta_2 = h_{r_2} \alpha_2 h_{r_1}^{-1}$, $\ldots$, $\beta_n = \alpha_n h_{r_n}^{-1}$, which is exactly the UFP.\

We show this fact by contradiction. Suppose that $x_1 = x'_1$, $x_2 = x_2'$, $\ldots$, $x_{i-1} = x'_{i-1}$, and let $x_i$ be the first object in the sequence different from $x_i'$. Notice both $x_m \ldots x_1$ and $x_n' \ldots x_1'$ are subsequences of the sequence $w_r \ldots w_1$. As a result, $x_i$ appears before $x_i'$ in $w_r \ldots w_1$, or after $x_i'$. Without loss of generality we assume that $x_i$ is before $x_i'$. Let $x_{i-1} = x'_{i-1} = w_a$, $x_i= w_b$ and $x_i'= w_c$. We must have $a \leq b < c$. Furthermore, we check that
\begin{equation*}
\beta_i = \theta_c \circ \theta_{c-1} \circ \ldots \circ \theta_{a+1} = (\theta_c \circ \ldots \circ \theta_{b+1}) \circ (\theta_b \circ \ldots \circ \theta_{a+1}),
\end{equation*}
with $(\theta_c \circ \ldots \circ \theta_{b+1})$ contained in Hom$_{\mathcal{C}} (x_i, x_i')$ and $(\theta_b \circ \ldots \circ \theta_{a+1})$ contained in Hom$_{\mathcal{C}} (x_{i-1}, x_i)$. But $\mathcal{D}$ is a full subcategory, so $\beta_i$ is the composite of two non-isomorphisms in $\mathcal{D}$. This is a contradiction since we have assumed that $\beta_i$ is unfactorizable in $\mathcal{D}$.
\end{proof}

The condition that $\mathcal{D}$ is a full subcategory of $\mathcal{C}$ is required. Consider the following two examples:

\begin{example}
Let $\mathcal{C}$ be the free category generated by the following quiver. Let $\mathcal{D}$ be the subcategory of $\mathcal{C}$ obtained by removing the morphism $\beta$ from $\mathcal{C}$. The category $\mathcal{C}$ is free, but the subcategory $\mathcal{D}$ is not free. Indeed, morphisms $\alpha$, $\beta\alpha$, $\gamma \beta$, $\gamma$ are all unfactorizable in $\mathcal{D}$. Thus the morphism $\gamma \beta \alpha$ has two decompositions $(\gamma\beta) \circ \alpha$ and $(\gamma)\circ (\beta\alpha)$, which contradicts the UFP.
\end{example}

\begin{equation*}
\xymatrix{ \bullet \ar[r]^{\alpha} & \bullet \ar[r]^{\beta} & \bullet \ar[r]^{\gamma} & \bullet }
\end{equation*}

\begin{example}
Let $\mathcal{C}$ be the following category, where $g$ generates a cyclic group of order 2, interchanges $\beta_1$ and $\beta_2$, and fixes $\alpha$. Then $\beta_1 \alpha = (\beta_2 g) \alpha = \beta_2 (g \alpha) = \beta_2 \alpha$. It is not hard to check that $\mathcal{C}$ satisfies the UFP; but the subcategory formed by removing the morphism $g$ from $\mathcal{C}$ doesn't satisfies the UFP.
\end{example}

\begin{equation*}
\xymatrix{ 1_x \ar[r]^{\alpha} & \langle g \rangle \ar@<0.5ex>[r]^{\beta_1} \ar@<-0.5ex>[r]_{\beta_2} & 1_z }
\end{equation*}

\begin{proposition}
Let $\mathcal{C}$ be a finite EI category and $\hat{\mathcal{C}}$ be its free EI cover. Then the category algebra $k\mathcal{C}$ is a quotient algebra of $k \hat {\mathcal{C}}$. In particular, if $\hat {\mathcal{C}}$ is of finite representation type, so is $\mathcal{C}$.
\end{proposition}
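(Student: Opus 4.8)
The plan is to exploit the full functor $\hat{F}: \hat{\mathcal{C}} \rightarrow \mathcal{C}$ furnished by Proposition 2.8 and to show that it linearizes to a surjection of category algebras. Concretely, I would define a map $k\hat{F}: k\hat{\mathcal{C}} \rightarrow k\mathcal{C}$ on the distinguished bases (the morphisms) by sending each basis morphism $\alpha$ to $\hat{F}(\alpha)$, and extend by $k$-linearity. The first task is to verify that this is a homomorphism of unital $k$-algebras. For two composable basis morphisms, functoriality gives $\hat{F}(\alpha \circ \beta) = \hat{F}(\alpha) \circ \hat{F}(\beta)$, so the two products agree; and since $\hat{F}$ is the identity on objects, it carries the unit $\sum_x 1_x$ of $k\hat{\mathcal{C}}$ to the unit $\sum_x 1_x$ of $k\mathcal{C}$, so units are preserved.

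The delicate point, and the step I expect to require the most care, is the case of two basis morphisms $\alpha, \beta$ that are not composable in $\hat{\mathcal{C}}$, so that $\alpha * \beta = 0$ there. For $k\hat{F}$ to respect the product I need $\hat{F}(\alpha)$ and $\hat{F}(\beta)$ to remain non-composable in $\mathcal{C}$, so that their product is again $0$. This is exactly where I would use that $\hat{F}$ is the identity, hence a bijection, on objects: non-composability of $\alpha$ and $\beta$ means the target of $\beta$ differs from the source of $\alpha$, and since $\hat{F}$ is injective on objects these images still differ, forcing $\hat{F}(\alpha) * \hat{F}(\beta) = 0$. For a general functor this can fail, which is precisely why object-injectivity is essential here. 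With this checked, $k\hat{F}$ is a genuine homomorphism of algebras.

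Surjectivity is then immediate: because $\hat{F}$ is full and bijective on objects, every $\text{Hom}_{\mathcal{C}}(x,y)$ is hit, so every morphism of $\mathcal{C}$, that is, every basis vector of $k\mathcal{C}$, lies in the image of $k\hat{F}$. Hence $k\hat{F}$ is a surjective algebra homomorphism and $k\mathcal{C} \cong k\hat{\mathcal{C}}/\ker(k\hat{F})$ is a quotient algebra of $k\hat{\mathcal{C}}$, as required.

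For the representation-type statement I would invoke the standard fact that a quotient algebra inherits finite representation type. Writing $k\mathcal{C} = k\hat{\mathcal{C}}/I$ with $I = \ker(k\hat{F})$, inflation along the quotient map realizes $k\mathcal{C}$-mod as a full subcategory of $k\hat{\mathcal{C}}$-mod, since an $A$-linear map between modules on which $I$ acts trivially is the same as a $k\mathcal{C}$-linear map. Being fully faithful, this embedding identifies endomorphism rings and therefore preserves indecomposability, and it sends non-isomorphic modules to non-isomorphic modules. Consequently the number of indecomposable $k\mathcal{C}$-modules up to isomorphism is bounded by the number of indecomposable $k\hat{\mathcal{C}}$-modules, so finiteness of the latter forces finiteness of the former. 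This completes the argument.
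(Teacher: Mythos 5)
Your proof is correct and takes essentially the same route as the paper: both linearize the full, objects-bijective functor $\hat{F}:\hat{\mathcal{C}}\rightarrow\mathcal{C}$ to a surjective algebra homomorphism $k\hat{\mathcal{C}}\rightarrow k\mathcal{C}$, with surjectivity coming from fullness. The only difference is that you verify by hand (including the key non-composability case, where object-injectivity is indeed what is needed) the fact that the paper outsources to Proposition 2.2.3 of Xu's thesis, and you also spell out the standard inflation argument for inheriting finite representation type, which the paper leaves implicit.
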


\begin{proof}
Since the functor $\hat{F}: \hat{\mathcal{C}} \rightarrow \mathcal{C}$ is bijective on objects, by Proposition 2.2.3 in ~\cite{Xu2}, it induces an algebra homomorphism from $k\hat{\mathcal{C}}$ to $k\mathcal{C}$ by sending $\alpha \in \text{Mor} (\hat{\mathcal{C}})$ to $\hat{F} (\alpha) \in \text{Mor} (\mathcal{C})$. This homomorphisms is surjective since $\hat{F}$ is full.
\end{proof}

We want to clarify some confusion probably caused by the name ``finite free EI categories": a finite free EI category $\mathcal{C}$ is in general not a free category as the following example shows.\

\begin{example} Let $\mathcal{C}$ be a category with one object whose morphisms form a non-identity group. Then $\mathcal{C}$ is a finite free EI category by our definition. But it is not a free category since the morphism group is not a free monoid.
\end{example}

\section{Representations of Free EI Categories}

In this section we study the representations of finite free EI categories. First, let us introduce some notation.\

A representative unfactorizable morphism $\alpha: x\rightarrow y$ uniquely determines a subcategory $\mathcal{D}_{\alpha}$ of $\mathcal{C}$: Ob$(\mathcal{D}_{\alpha}) = \{x,y\}$, Aut$_{\mathcal{D}_{\alpha}}(x) = \text{Aut}_{\mathcal{C}}(x)$, Aut$_{\mathcal{D}_{\alpha}}(y) = \text{Aut}_{\mathcal{C}}(y)$, Hom$_{\mathcal{D}_{\alpha}}(x,y)$ is the whole (Aut$_{\mathcal{C}} (y)$, Aut$_ {\mathcal{C}} (x)$)-orbit where $\alpha$ lies. Obviously, $\mathcal{D}_{\alpha}$ is a finite free EI category.\

With the above setup, we can show that a representation of a finite free EI category $\mathcal{C}$ is determined by its local structures on subcategories $\mathcal{D}_{\alpha}$ for all representative unfactorizable morphisms $\alpha$. Stated formally, it is:

\begin{proposition}
Let $\mathcal{C}$ be a finite free EI category, and $R$ be a rule assigning a $k \text{Aut}_{\mathcal{C}} (x)$-module to each object $x$ and a linear transformation to each unfactorizable morphism in $\mathcal{C}$. If $R$ restricted to $\mathcal{D}_{\alpha}$ is a representation of $\mathcal{D}_{\alpha}$ for every representative unfactorizable morphism $\alpha$ in $\mathcal{C}$, then $R$ induces a unique representation $\tilde{R}$ of $\mathcal{C}$ such that $\tilde{R}$ restricted to each $\mathcal{D}_{\alpha}$ is the same as $R$ restricted to $\mathcal{D}_{\alpha}$.
\end{proposition}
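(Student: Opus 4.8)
The plan is to define $\tilde{R}$ in the only way compatible with functoriality, and then to check that this definition is both forced and consistent. On objects set $\tilde{R}(x) = R(x)$; on each isomorphism $g \in \text{Aut}_{\mathcal{C}}(x)$ let $\tilde{R}(g)$ be the action of $g$ coming from the given $k\text{Aut}_{\mathcal{C}}(x)$-module structure on $R(x)$; and on each unfactorizable morphism $\alpha$ set $\tilde{R}(\alpha) = R(\alpha)$. For an arbitrary non-isomorphism $\delta \colon x \to y$, Proposition 2.6 supplies a factorization $\delta = \delta_r \circ \cdots \circ \delta_1$ into unfactorizable morphisms, and I would define
\begin{equation*}
\tilde{R}(\delta) = R(\delta_r) \circ \cdots \circ R(\delta_1).
\end{equation*}

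The heart of the argument, and the step I expect to be the main obstacle, is showing this is independent of the chosen factorization. Given a second factorization $\delta = \delta'_r \circ \cdots \circ \delta'_1$ into unfactorizables, the UFP (Proposition 2.7), available precisely because $\mathcal{C}$ is free, guarantees that the two factorizations have the same length, pass through the same intermediate objects $x_0, \ldots, x_r$, and are related by automorphisms $h_1, \ldots, h_{r-1}$ via $\delta'_1 = h_1 \delta_1$, $\delta'_i = h_i \delta_i h_{i-1}^{-1}$ for $1 < i < r$, and $\delta'_r = \delta_r h_{r-1}^{-1}$. The crucial input is the hypothesis that $R$ restricted to each $\mathcal{D}_\alpha$ is a representation: since $\delta_i$ and $\delta'_i$ share source and target and differ by automorphisms, they lie in the same $(\text{Aut}_{\mathcal{C}}(x_i), \text{Aut}_{\mathcal{C}}(x_{i-1}))$-orbit, hence in a common $\mathcal{D}_\alpha$, and so being a representation of $\mathcal{D}_\alpha$ forces the splitting relation $R(h_i \delta_i h_{i-1}^{-1}) = \tilde{R}(h_i) \circ R(\delta_i) \circ \tilde{R}(h_{i-1})^{-1}$, where $\tilde{R}(h_i)$ is the module action. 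Substituting these relations into $R(\delta'_r) \circ \cdots \circ R(\delta'_1)$ and using $\tilde{R}(h_i)^{-1} \circ \tilde{R}(h_i) = \tilde{R}(1) = \mathrm{id}$, a consequence of the module axioms, the interior automorphisms cancel telescopically and the product collapses to $R(\delta_r) \circ \cdots \circ R(\delta_1)$. This is the same cancellation already carried out in the proof of Proposition 2.8, so the difficulty is essentially one of index bookkeeping rather than of new ideas.

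It then remains to verify that $\tilde{R}$ is genuinely a functor, that is $\tilde{R}(\beta \circ \alpha) = \tilde{R}(\beta) \circ \tilde{R}(\alpha)$ for all composable $\alpha, \beta$. When both are non-isomorphisms one concatenates their unfactorizable factorizations into a factorization of $\beta \circ \alpha$ and invokes well-definedness; when one is an isomorphism, say $\alpha = g \in \text{Aut}_{\mathcal{C}}(x)$ and $\beta = \beta_p \circ \cdots \circ \beta_1$, I would absorb $g$ into the first factor via $\beta_1 \circ g$, which remains unfactorizable and lies in the same $\mathcal{D}_{\beta_1}$, so the $\mathcal{D}_{\beta_1}$-representation relation $R(\beta_1 \circ g) = R(\beta_1) \circ \tilde{R}(g)$ yields the claim; the case of two isomorphisms is immediate from the module structure. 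Finally $\tilde{R}$ agrees with $R$ on each $\mathcal{D}_\alpha$ by construction, and uniqueness is forced, since any representation extending the $\mathcal{D}_\alpha$-data must send every morphism to the composite of the images of its unfactorizable factors and automorphisms, and must therefore coincide with $\tilde{R}$ on all of $\text{Mor}(\mathcal{C})$.
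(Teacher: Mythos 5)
Your proof is correct and follows essentially the same route as the paper's: define $\tilde{R}$ on objects, automorphisms and unfactorizable morphisms directly, extend to factorizable morphisms via a chosen decomposition, and use the UFP together with the hypothesis that $R$ restricted to each $\mathcal{D}_{\alpha}$ is a representation to make the interior automorphisms cancel telescopically, with uniqueness forced by the construction. Your explicit verification that composition is preserved (including absorbing an isomorphism into an adjacent unfactorizable factor inside its $\mathcal{D}_{\alpha}$) is a point the paper leaves implicit, but it is the same argument in substance.
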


\begin{proof}
Since we can define $\tilde{R}(x)=R(x)$ for each object $x$ in $\mathcal{C}$, it suffices to define a linear map for each morphism in $\mathcal{C}$ such that the composition relations are preserved. If $\beta$ is an automorphism or an unfactorizable morphism, it must belong to some subcategory $\mathcal{D}_{\alpha}$ for a representative unfactorizable morphism $\alpha$. Because $R$ restricted to $\mathcal{D}_{\alpha}$ is a representation, we then define $\tilde{R} (\beta) = R(\beta)$.\

Now suppose that $\beta$ is factorizable and decompose it into unfactorizable morphisms of the following form
\begin{equation*}
\xymatrix{x=x_0 \ar[r]^{\alpha_1} & x_1 \ar[r]^{\alpha_2} & \ldots \ar[r]^{\alpha_m} &x_m=y}.
\end{equation*}
Let $\tilde{R}(\alpha) = R(\alpha_m) \ldots R(\alpha_1)$. We claim that it is well defined. That is: if $\alpha$ has two different decompositions
\begin{equation*}
\xymatrix{x=x_0 \ar[r]^{\alpha_1} & x_1 \ar[r]^{\alpha_2} & \ldots \ar[r]^{\alpha_m} &x_m=y}
\end{equation*}
and
\begin{equation*}
\xymatrix{x=x_0 \ar[r]^{\beta_1} & y_1 \ar[r]^{\beta_2} & \ldots \ar[r]^{\beta_n} &y_n=y},
\end{equation*}
then it is true that $R(\alpha_m) \ldots R(\alpha_1) = R(\beta_n) \ldots R(\beta_1)$.\

Indeed, because $\mathcal{C}$ satisfies the unique factorization property, we have $m=n$, $x_i = y_i$, and $\beta_1 = h_1 \alpha_1$, $\beta_2 = h_2 \alpha_2 h_1^{-1}$, \ldots, $\beta_{n-1} = h_{n-1} \alpha_{n-1} h_{n-2}^{-1}$, $\beta_n = \alpha_n h_{n-1}^{-1}$ for some $h_i \in \text{End} _{\mathcal{C}} (x_i)$, $1 \leqslant i \leqslant n-1$. Notice that each $\alpha_i$ and the corresponding $\beta_i$ are unfactorizable, and lie in $\mathcal{D}_{\alpha_i}$ since $\beta_i$ and $\alpha_i$ are in the same orbit. The fact that $R$ restricted to $\mathcal{D}_{\alpha_i}$ is a representation of this subcategory implies $R(h_i) R(h_i^{-1}) = R(h_i h_i^{-1}) = 1$. Thus:
\begin{align*}
& \tilde{R}(\beta_n) \ldots R(\beta_2) R(\beta_1)\\
& = R(\alpha_n) R(h_{n-1}^{-1}) R(h_{n-1}) R(\alpha_{n-1}) R(h_{n-2}^{-1})  \ldots  R(h_1) R(\alpha_1) \\
& = R(\alpha_n) \ldots R(\alpha_1) \\
& = \tilde{R}(\alpha_n \ldots \alpha_1).
\end{align*}

We proved that $\tilde{R}$ is indeed a representation of $\mathcal{C}$. Moreover, $\tilde{R}$ restricted to any $\mathcal{D}_{\alpha}$ is exactly the same as $R$ restricted to this subcategory. The uniqueness is obvious since from the above process we conclude that $\tilde{R}$ is uniquely determined by its values on subcategories $\mathcal{D}_{\alpha}$, for all representative unfactorizable morphisms $\alpha$ in $\mathcal{C}$.
\end{proof}

\begin{proposition}
Let $\mathcal{C}$ be a finite free EI category. Let $R_1$ and $R_2$ be two representations of $\mathcal{C}$. Then a family of $ k\text{Aut}_{\mathcal{C}}(x)$-module homomorphisms $\{ \phi_x: R_1(x) \rightarrow R_2(x) \mid x \in \text{Ob}(\mathcal{C}) \}$ is a $k\mathcal{C}$-module homomorphism from $R_1$ to $R_2$ if and only if for each representative unfactorizable morphism $\alpha: x \rightarrow y$, we have $\phi_y R_1(\alpha) = R_2(\alpha) \phi_x$.
\end{proposition}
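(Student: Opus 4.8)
The plan is to note that the ``only if'' direction is immediate, and to establish the ``if'' direction by propagating the naturality condition outward from representative unfactorizable morphisms: first to all unfactorizable morphisms, and then to arbitrary morphisms using the factorization results of Section 2. Recall that a $k\mathcal{C}$-module homomorphism $R_1 \to R_2$ is by definition a natural transformation, i.e. a family $\{\phi_x\}$ of $k\text{Aut}_{\mathcal{C}}(x)$-module maps satisfying $\phi_y R_1(\beta) = R_2(\beta)\phi_x$ for \emph{every} morphism $\beta \colon x \to y$. For the forward direction, this condition in particular holds for each representative unfactorizable $\alpha$, so there is nothing to prove.

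For the converse, suppose $\{\phi_x\}$ is a family of $k\text{Aut}_{\mathcal{C}}(x)$-module homomorphisms satisfying $\phi_y R_1(\alpha) = R_2(\alpha)\phi_x$ for every representative unfactorizable $\alpha$. I would verify the naturality square $\phi_y R_1(\beta) = R_2(\beta)\phi_x$ for all $\beta$ in three steps. If $\beta$ is an automorphism of $x$, the square is precisely the assumption that $\phi_x$ is a $k\text{Aut}_{\mathcal{C}}(x)$-module homomorphism. Next, if $\beta \colon x \to y$ is an arbitrary unfactorizable morphism, Proposition 2.5 lets me write $\beta = h\alpha g$, where $\alpha$ is the representative of its $(\text{Aut}_{\mathcal{C}}(y),\text{Aut}_{\mathcal{C}}(x))$-orbit, $h \in \text{Aut}_{\mathcal{C}}(y)$ and $g \in \text{Aut}_{\mathcal{C}}(x)$. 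Using functoriality of $R_1$ and $R_2$, the module-homomorphism property of $\phi_x$ and $\phi_y$, and the hypothesis at $\alpha$, the computation telescopes:
\begin{align*}
\phi_y R_1(\beta) &= \phi_y R_1(h) R_1(\alpha) R_1(g) \\
&= R_2(h)\, \phi_y R_1(\alpha)\, R_1(g) \\
&= R_2(h) R_2(\alpha)\, \phi_x R_1(g) \\
&= R_2(h) R_2(\alpha) R_2(g)\, \phi_x = R_2(\beta)\phi_x.
\end{align*}
Thus the square holds for every unfactorizable morphism.

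Finally, for a factorizable morphism $\beta \colon x \to y$, Proposition 2.6 provides a decomposition $\beta = \alpha_m \circ \cdots \circ \alpha_1$ into unfactorizable morphisms through objects $x = x_0, x_1, \ldots, x_m = y$. Applying the previous step at each $\alpha_i$ and sliding $\phi$ across the chain one factor at a time gives $\phi_y R_1(\beta) = R_2(\alpha_m)\cdots R_2(\alpha_1)\phi_x = R_2(\beta)\phi_x$, completing the verification. The argument presents no genuine obstacle; the only point worth flagging is that the decomposition in the last step is not unique, but this is harmless because $R_1$ and $R_2$ are honest functors, so $R_1(\beta)$ and $R_2(\beta)$ depend only on $\beta$ and not on the chosen factorization. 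Hence any decomposition suffices for the telescoping, in the same spirit as the well-definedness argument used in Proposition 3.1.
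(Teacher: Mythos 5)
Your proof is correct and takes essentially the same route as the paper's: the ``only if'' direction is dismissed as trivial, arbitrary unfactorizable morphisms are handled by writing $\beta = h\alpha g$ with $\alpha$ the orbit representative and telescoping through $R_1(h)$, $R_1(\alpha)$, $R_1(g)$, and factorizable morphisms are handled by decomposing into unfactorizable ones (Proposition 2.6) and sliding the naturality squares along the chain. Your closing remark that the non-uniqueness of the factorization is harmless because $R_1(\beta)$ and $R_2(\beta)$ depend only on $\beta$ is a sensible clarification the paper leaves implicit, but it does not change the argument.
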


\begin{proof}
The only if part is trivial. For the other direction, let us consider unfactorizable morphisms first. If $\beta: x \rightarrow y$ is unfactorizable, it lies in the same orbit as a unique representative unfactorizable morphism $\alpha$, i.e., $\beta =h\alpha g$, where $g \in \text{Aut}_{\mathcal{C}}(x)$ and $h \in \text{Aut}_{\mathcal{C}}(y)$. Because $\phi_x$ is a $k \text{Aut}_{\mathcal{C}}(x)$-module homomorphism, and $\phi_y$ is a $k \text{Aut}_{\mathcal{C}}(y)$-module homomorphism, we have:
\begin{align*}
\phi_y R_1(h\alpha g) & = \phi_y R_1(h) R_1(\alpha) R_1(g) \\
& = R_2(h) \phi_y R_1(\alpha) R_1(g)\\
& = R_2(h) R_2(\alpha) \phi_x R_1(g)\\
& = R_2(h) R_2(\alpha) R_2(g) \phi_x\\
& = R_2(h \alpha g) \phi_x.
\end{align*}
That is, $\phi_y R_1(\beta) = R_2(\beta) \phi_x$.\

If $\beta$ is factorizable, let \xymatrix{x=x_0 \ar[r]^{\alpha_1} & x_1 \ar[r]^{\alpha_2} & \ldots \ar[r]^{\alpha_m} &x_m=y} be a decomposition of $\beta$ into unfactorizable morphisms. Consider the following diagram:\
\begin{align*}
\centerline{
\xymatrix{R_1(x_0) \ar[r]^{R_1(\alpha_1)} \ar[d]^{\phi_0} & R_1(x_1) \ar[r]^{R_1(\alpha_2)} \ar[d]^{\phi_1} & \ldots \ar[r]^{R_1(\alpha_m)} &R_1(x_m) \ar[d]^{\phi_m} \\
R_2(x_0) \ar[r]^{R_2(\alpha_1)} & R_2(x_1) \ar[r]^{R_2(\alpha_2)} & \ldots \ar[r]^{R_2(\alpha_m)} &R_2(x_m).}
}
\end{align*}
For $i=0,\ldots,m-1$, we have $\phi_{i+1} R_1(\alpha_{i+1}) = R_2(\alpha_{i+1}) \phi_i$. Thus:
\begin{align*}
& \phi_m R_1(\beta)\\
& = \phi_m R_1(\alpha_m) R_1(\alpha_{m-1}) \ldots  R_1(\alpha_1) \\
& = R_2(\alpha_m) \phi_{m-1} R_1(\alpha_{m-1}) \ldots R_1(\alpha_1) \\
& \ldots \\
& =R_2(\alpha_m) R_2(\alpha_{m-1}) \ldots R_2(\alpha_1) \phi_0 \\
& =R_2(\beta) \phi_0.
\end{align*}

This finishes the proof.
\end{proof}

Throughout the remaining of this section we use the following notation: $\alpha: x \rightarrow y$ is a fixed representative unfactorizable morphism and $\mathcal{D}_{\alpha}$ is the corresponding subcategory of $\mathcal{C}$ (see our notation before Proposition 3.1 for the definition of $\mathcal{D}_{\alpha}$). Let $G$ and $H$ be Aut$_{\mathcal{C}}(x)$ and Aut$_{\mathcal{C}}(y)$ respectively. Define: $G_0=\text{Stab}_{G}(\alpha)$, $H_0=\text{Stab}_{H}(\alpha)$, $G_1=\text{Stab}_{G}(H\alpha)$, and $H_1=\text{Stab}_H(\alpha G)$. Obviously $G_1=\{g \in G: \text{exists } h\in H \text{ with }\alpha g=h\alpha\}$, and $H_1= \{h \in H: \text{exists } g\in G \text{ with } h\alpha = \alpha g\}$. The group $G$ ($H$, resp.) acts transitively on Hom$_{\mathcal{D}_{\alpha}} (x,y)$ if and only if $H_1=H$ ($G_1=G$, resp.), bearing in mind that Hom$_{\mathcal{D}_{\alpha}} (x,y)$ only has one orbit as a biset.

\begin{lemma}
With the above notation, $G_0$ and $G_1$ are subgroups of $G$, $H_0$ and $H_1$ are subgroups of $H$. Moreover, $G_0\lhd G_1$, $H_0\lhd H_1$, and $G_1/G_0 \cong H_1/H_0$.
\end{lemma}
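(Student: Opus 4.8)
The plan is to dispatch the subgroup claims quickly and then realize the isomorphism $G_1/G_0 \cong H_1/H_0$ as an instance of the first isomorphism theorem. That $G_0 = \text{Stab}_G(\alpha)$ and $H_0 = \text{Stab}_H(\alpha)$ are subgroups is automatic, being point stabilizers. For $G_1$ I would verify closure and inverses directly from the two identities that govern everything in this lemma: if $\alpha g_1 = h_1\alpha$ and $\alpha g_2 = h_2\alpha$, then $\alpha(g_1 g_2) = (h_1 h_2)\alpha$ (using associativity and right-to-left composition), and if $\alpha g = h\alpha$ then $\alpha g^{-1} = h^{-1}\alpha$; the mirror computations handle $H_1$. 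These same identities show $G_0 \subseteq G_1$ and $H_0 \subseteq H_1$, since $\alpha = 1\cdot\alpha$.

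For normality I would argue directly. Given $h \in H_1$, pick $g \in G$ with $h\alpha = \alpha g$, so that also $h^{-1}\alpha = \alpha g^{-1}$; then for $n \in H_0$ one computes $(hnh^{-1})\alpha = hn(h^{-1}\alpha) = hn(\alpha g^{-1}) = h(n\alpha)g^{-1} = h\alpha g^{-1} = \alpha g g^{-1} = \alpha$, whence $hnh^{-1} \in H_0$ and $H_0 \lhd H_1$. The symmetric argument gives $G_0 \lhd G_1$, so that both quotient groups in the statement are defined.

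The heart of the proof is the isomorphism. I would define $\Phi \colon G_1 \to H_1/H_0$ by sending $g$ to $hH_0$, where $h$ is any element of $H$ with $\alpha g = h\alpha$ (such $h$ exists precisely because $g \in G_1$, and any such $h$ lies in $H_1$ since $h\alpha = \alpha g \in \alpha G$). First I would check that $\Phi$ is well defined: if $\alpha g = h\alpha = h'\alpha$ then $(h'^{-1}h)\alpha = \alpha$, so $h'^{-1}h \in H_0$ and $hH_0 = h'H_0$. Using $\alpha(g_1 g_2) = (h_1 h_2)\alpha$ together with the normality of $H_0$ in $H_1$, the map $\Phi$ is a homomorphism; it is surjective because every $h \in H_1$ satisfies $h\alpha = \alpha g$ for some $g$, which then lies in $G_1$ with $\Phi(g) = hH_0$; and its kernel is exactly $\{g : \alpha g = h\alpha \text{ with } h \in H_0\} = \{g : \alpha g = \alpha\} = G_0$. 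The first isomorphism theorem then yields $G_1/G_0 \cong H_1/H_0$.

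The step I expect to be the main obstacle is the well-definedness-plus-homomorphism package for $\Phi$: the correspondence $g \leftrightarrow h$ is genuinely many-valued on the nose, and only after passing to the quotients by $G_0$ and $H_0$ (and after establishing $H_0 \lhd H_1$) does it become a bona fide group homomorphism. Conceptually this is precisely the statement that the two coordinate projections of the biset stabilizer $L = \{(h,g) \in H \times G : h\alpha = \alpha g\}$, acting on the transitive biset $\text{Hom}_{\mathcal{D}_{\alpha}}(x,y) = H\alpha G$, have images $G_1$ and $H_1$ with kernels $H_0 \times \{1\}$ and $\{1\} \times G_0$, and hence induce isomorphisms $G_1/G_0 \cong L/(H_0 \times G_0) \cong H_1/H_0$. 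I would keep this $L$-viewpoint in reserve as a cross-check, since it reads off all six assertions of the lemma from a single subgroup at once.
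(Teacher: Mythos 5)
Your proof is correct and takes essentially the same approach as the paper: direct verification of the subgroup claims, normality by the conjugation computation using $\alpha g = h\alpha$ and $\alpha g^{-1} = h^{-1}\alpha$, and then the map $g \mapsto \bar{h} \in H_1/H_0$ whose kernel is $G_0$, concluding by the first isomorphism theorem. The paper states this map and leaves well-definedness, the homomorphism property, surjectivity, and the kernel computation to the reader; you have supplied exactly those details (your auxiliary subgroup $L \leqslant H \times G$ is a reasonable cross-check but is not needed).
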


\begin{proof}
This result is well-known to Bouc who described the structures of bisets in ~\cite{Bouc}. We give an elementary proof here. It is routine to check that $G_0 \leqslant G$, $G_1 \leqslant G$ by definition. Similarly, $H_0 \leqslant H$ and $H_1 \leqslant H$. It is also clear that $G_0 \leqslant G_1$ and $H_0 \leqslant H_1$.\

Let $g_0 \in G_0$, $g\in G_1$ and consider $g^{-1}g_0g$. Since $g\in G_1$, there is $h\in H$ such that $\alpha g=h\alpha$. Then $\alpha g^{-1}g_0g=h^{-1}\alpha g_0 g=h^{-1}\alpha g=h^{-1}h\alpha=\alpha$. This means $g^{-1}g_0g\in G_0$ and $G_0\lhd G_1$. Similarly, $H_0 \lhd H_1$.\

Now define a map $\phi: G_1 \mapsto H_1/H_0$ by the following rule: for every $g\in G_1$, there is some $h\in H$ such that $\alpha g=h\alpha$. By the definition $h$ is contained in $H_1$. Define $\phi(g)=\bar{h}$, the image of $h$ in $H_1/H_0$. The reader can check that $\phi$ is well defined, surjective, and a group homomorphism. Moreover, the kernel of this map is exactly $G_0$. Thus $G_1/G_0 \cong H_1/H_0$.
\end{proof}

\begin{remark}
Under the isomorphism given in this lemma, we identify the quotient group $G_1/G_0$ with $H_1/H_0$, and the module $k\uparrow_{G_0}^{G_1} \cong k(G_1/G_0)$ with $k\uparrow_{H_0}^{H_1} \cong k(H_1/H_0)$.
\end{remark}

From now on we insist:\\

\noindent \textbf{Convention:} \textit{All finite EI categories $\mathcal{C}$ we study satisfy that the endomorphism group of every object $x$ in $\mathcal{C}$ has order invertible in $k$.}\\

A representation $R$ of $\mathcal{D}_{\alpha}$ determines a linear map $\varphi= R(\alpha)$ from a $kG$-module $R(x)$ to a $kH$-module $R(y)$. Notice that both $R(x)$ and $R(y)$ are semisimple by Maschke's theorem in view of the previous convention.

\begin{lemma}
Let $U$ be a simple summand of $R(x)\downarrow_{G_1}^G$. If $\varphi(U) \neq 0$, then:\\
(1) $U$ is a direct summand of $k\uparrow_{G_0}^{G_1}$;\\
(2) the restricted map $\varphi_U: U \rightarrow \varphi(U)$ is a $k(G_1/G_0)$-module isomorphism under the identification $k\uparrow_{G_0}^{G_1} \cong k\uparrow _{H_0}^{H_1}$;\\
(3) $\varphi(U)$ is a direct summand of $k\uparrow_{H_0}^{H_1}$.
\end{lemma}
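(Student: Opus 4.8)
The plan is to begin by extracting from the hypothesis (that $R$ restricted to $\mathcal{D}_{\alpha}$ is a representation) the precise equivariance properties of the linear map $\varphi = R(\alpha)\colon R(x) \to R(y)$. Set $e_0 = \frac{1}{|G_0|}\sum_{g \in G_0} g \in kG$; this is an idempotent because $|G_0|$ is invertible in $k$. Since $\alpha g = \alpha$ for every $g \in G_0$, applying $R$ gives $\varphi R(g) = \varphi$, and averaging yields $\varphi R(e_0) = \varphi$. Consequently $\varphi$ vanishes on $(1 - R(e_0))R(x)$, that is, on every $G_0$-submodule of $R(x)$ with no trivial constituent. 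Dually, $h\alpha = \alpha$ for $h \in H_0$ gives $R(h)\varphi = \varphi$, so the image $\varphi(R(x))$ lies in the fixed points $R(y)^{H_0}$. Finally, for $g \in G_1$ choose $h \in H_1$ with $\alpha g = h\alpha$; then $\varphi R(g) = R(h)\varphi$, and because $R(h)\varphi = R(hh_0)\varphi$ for all $h_0 \in H_0$ (the image being $H_0$-fixed), the right-hand side depends only on the coset $hH_0$. Thus $\varphi$ intertwines the $G_1$-action on $R(x)$ with the $H_1$-action on $R(y)^{H_0}$, both of which factor through the common quotient $G_1/G_0 \cong H_1/H_0$ of Lemma~3.3 (and I would invoke the identification of Remark~3.4 here).

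With these properties in hand, I would prove (1) as follows. Suppose $\varphi(U) \neq 0$. If $U$ restricted to $G_0$ had no trivial summand, then $U \subseteq (1 - R(e_0))R(x)$ and hence $\varphi(U) = 0$, a contradiction; so $U^{G_0} = \operatorname{Hom}_{kG_0}(k, U\!\downarrow^{G_1}_{G_0}) \neq 0$. By Frobenius reciprocity this yields $\operatorname{Hom}_{kG_1}(k\!\uparrow^{G_1}_{G_0}, U) \neq 0$. Since $|G_1|$ is invertible, $k\!\uparrow^{G_1}_{G_0}$ is semisimple, and as $U$ is simple, $U$ must be a direct summand of $k\!\uparrow^{G_1}_{G_0}$.

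For (2), note that (1) forces $G_0$ to act trivially on $U$, so its $G_1$-module structure is inflated from $G_1/G_0$; by the intertwining property established above, $\varphi|_U\colon U \to R(y)^{H_0}$ is a homomorphism of $k(G_1/G_0) = k(H_1/H_0)$-modules. Its kernel is a $G_1$-submodule of the simple module $U$, and since $\varphi(U) \neq 0$ the kernel is proper, hence zero; thus $\varphi_U$ is an isomorphism of $k(G_1/G_0)$-modules onto $\varphi(U)$. Then (3) is immediate: combining (1) and (2), $\varphi(U) \cong U$ is isomorphic to a direct summand of $k\!\uparrow^{G_1}_{G_0}$, which under the identification of Remark~3.4 is $k\!\uparrow^{H_1}_{H_0}$; as this module is semisimple, $\varphi(U)$ is itself a direct summand of $k\!\uparrow^{H_1}_{H_0}$.

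The step I expect to be the main obstacle is establishing the intertwining property cleanly in the first paragraph, specifically checking that $R(h)\varphi$ is independent of the choice of $h$ satisfying $\alpha g = h\alpha$. This is exactly the point where one needs the image of $\varphi$ to land in the $H_0$-fixed points, and it is what makes the $H_1/H_0$-action on $\varphi(U)$ well-defined; once this is in place, everything else reduces to Frobenius reciprocity and the semisimplicity guaranteed by the invertibility convention.
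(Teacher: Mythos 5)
Your proposal is correct and takes essentially the same approach as the paper: both proofs rest on the $G_0$- and $H_0$-invariance of $\varphi$, Frobenius reciprocity together with semisimplicity, and the intertwining of actions through the identification $G_1/G_0 \cong H_1/H_0$ of Lemma 3.3. The only differences are cosmetic: you package the $G_0$-invariance via the averaging idempotent $\frac{1}{|G_0|}\sum_{g \in G_0} g$ where the paper chases a vector in a simple $kG_0$-summand of $U$, and you deduce statement (3) directly from (1) and (2) where the paper runs a second, parallel reciprocity argument on the $H$-side.
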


\begin{proof}
If $\varphi(U) \neq 0$, there is some $u \in V_1 \subseteq U$ such that $\varphi(u) = w \neq 0$, where $V_1$ is a simple summand of $U\downarrow_{G_0}^{G_1}$. We claim that each $g \in G_0$ fixes $u$, hence $V_1 \cong k$. Indeed, if there is $g\in G_0$ such that $gu\neq u$, then $\varphi(u-gu)=\varphi(u) - (\varphi g)(u)=0$ since $g\in G_0$ fixes $\varphi$. But $u-gu \neq 0$ is also in $V_1$ and generates the simple $kG_0$-module $V_1$, too. Now the fact that $G_0$ fixes $\varphi$ and $\varphi(u-gu)=0$ implies that $V_1$ is sent to 0 by $\varphi$. In particular $u$ is sent to 0. This is a contradiction. Thus $V_1 \cong k \mid U\downarrow_{G_0}^{G_1}$. But Hom$_{kG_1}(k\uparrow_{G_0}^{G_1}, U) \cong \text{Hom}_{kG_0} (k, U\downarrow_{G_0} ^G) \neq 0$, so $U \mid k\uparrow_{G_0}^{G_1}$. This proves the first statement.\

Take any $g\in G_1$ and let $\bar{g}$ be its image in $G_1/G_0$ under the natural projection. Since $U \mid k\uparrow _{G_0}^{G_1}$, we know $gu = \bar{g}u$ for every $u \in U$. With the identification $G_1/G_0 \cong H_1/H_0$, $\bar{g}$ is identified with a certain $\bar{h} \in H_1/H_0$, where $h \in H_1$ acts on $\varphi(U)$ in the same way as $\bar{h}$ acts on it. But $h\alpha = \alpha g \in \text{Hom}_{\mathcal{D}_{\alpha}} (x, y)$ and $R$ is a representation of $\mathcal{D_{\alpha}}$, so we should have $\varphi(gu) = h \varphi(u)$, that is: $\varphi(\bar{g}u) = \bar{h}\varphi(u)$. Consequently, $\varphi$ is a $k(G_1/G_0)$-module homomorphism if we identify $\bar{g}$ and $\bar{h}$. Since $U$ is simple, and $\varphi$ is nonzero, it must be an isomorphism. This is the second statement.

On the other hand, every $h \in H_0$ acts trivially on $\varphi(u) = w$ since $h$ fixes $\varphi$. Thus the $kH_0$-module generated by $w$ is isomorphic to the trivial $kH_0$-module $k$, so $k \mid \varphi(U) \downarrow_{H_0}^{H_1}$. Again, Hom$_{kH_1}(k\uparrow_{H_0}^{H_1}, \varphi(U)) \cong \text{Hom}_{kH_0} (k, \varphi(U) \downarrow_{H_0} ^{H_1}) \neq 0$, so $\varphi(U)$ and $k\uparrow_{H_0}^{H_1}$ have a common simple summand. By (2), $\varphi(U)$ as the nontrivial homomorphic image of a simple module must be simple, too. Thus $\varphi(U) \mid k \uparrow_{H_0}^{H_1}$.\
\end{proof}

The conclusions in the previous lemma actually characterize linear maps $R(\alpha)$, where $R$ is a representation of $\mathcal{D}_{\alpha}$.

\begin{lemma}
let $\varphi: M \rightarrow N$ be a linear map from a $kG$-module $M$ to a $kH$-module $N$ such that for every simple summand $U$ of $M \downarrow _{G_1}^G$, the restricted map $\varphi_U: U \rightarrow \varphi(U)$ is zero when $U \nmid k\uparrow_{G_0}^{G_1}$ , and a $k(G_1/G_0)$-module homomorphism otherwise. Then $\varphi$ determines a representation $R$ of $\mathcal{D}_{\alpha}$ with $R(x)= M$, $R(y) = N$ and $R(\alpha) = \varphi$.
\end{lemma}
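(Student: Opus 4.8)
The plan is to define $R$ on objects and automorphisms by the given data — $R(x)=M$, $R(y)=N$ with their module structures, and $R(g)$, $R(h)$ the actions of $g\in G$, $h\in H$ — and to set $R(\alpha)=\varphi$. Because $\mathrm{Hom}_{\mathcal{D}_\alpha}(x,y)$ is the single $(H,G)$-orbit $H\alpha G$, every morphism $x\to y$ has the form $h\alpha g$, and I would define $R(h\alpha g)=R(h)\varphi R(g)$. The content of the lemma is then entirely that this prescription is well defined; granting that, the composition axioms are automatic: $R(h'\beta)=R(h')R(\beta)$ and $R(\beta g')=R(\beta)R(g')$ follow by inspection for $\beta\in H\alpha G$, the relations inside $G$ and inside $H$ hold since $R$ restricts to the module actions, and since $x\neq y$ there are no composites $\mathrm{Hom}(x,y)\circ\mathrm{Hom}(x,y)$ nor any morphisms $\mathrm{Hom}(y,x)$ to worry about.

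First I would reduce well-definedness to a single compatibility statement. If $h\alpha g=h'\alpha g'$, then with $a=(h')^{-1}h\in H$ and $b=g'g^{-1}\in G$ one has $a\alpha=\alpha b$, and a direct substitution (using $h=h'a$, $g'=bg$) shows that $R(h)\varphi R(g)=R(h')\varphi R(g')$ holds for all such coincidences as soon as one knows the claim: $R(a)\varphi=\varphi R(b)$ whenever $a\in H$, $b\in G$ satisfy $a\alpha=\alpha b$. For any such pair one has $b\in G_1$ and $a\in H_1$, and by Lemma~3.3 the classes $\bar b\in G_1/G_0$ and $\bar a\in H_1/H_0$ are matched under the isomorphism $G_1/G_0\cong H_1/H_0$ of Remark~3.4.

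The main step is to prove this claim by restricting to $G_1$. As $|G|$, hence $|G_1|$, is invertible in $k$, Maschke's theorem gives $M\downarrow_{G_1}^G=\bigoplus_i U_i$ with each $U_i$ a simple $kG_1$-module, and it suffices to verify $R(a)\varphi=\varphi R(b)$ on each $U_i$. Since $G_0\lhd G_1$, the module $k\uparrow_{G_0}^{G_1}$ is inflated from the regular representation of $Q:=G_1/G_0$, so $U\mid k\uparrow_{G_0}^{G_1}$ holds exactly when $G_0$ acts trivially on $U$; this is what lets one write $bu=\bar b\,u$ for $u\in U$. When $U\nmid k\uparrow_{G_0}^{G_1}$ the hypothesis forces $\varphi|_U=0$, and since $b\in G_1$ stabilizes $U$ both $\varphi R(b)$ and $R(a)\varphi$ annihilate $U$. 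When $U\mid k\uparrow_{G_0}^{G_1}$, I would compute, for $u\in U$,
\[
\varphi(bu)=\varphi(\bar b\,u)=\bar b\cdot\varphi(u)=\bar a\cdot\varphi(u)=a\cdot\varphi(u)=R(a)\varphi(u),
\]
using in turn the triviality of $G_0$ on $U$, the $k(G_1/G_0)$-linearity of $\varphi|_U$, the identification $\bar b=\bar a$, and the $H_1$-action on the image. Summing over $i$ yields the claim, hence well-definedness and the lemma. The step I expect to demand the most care is the middle chain of equalities: it requires the $Q$-action on the image $\varphi(U)\subseteq N$ to coincide with the ambient $H_1$-action, i.e.\ that $\varphi(U)$ is an $H_1$-submodule on which $H_0$ (normal in $H_1$) acts trivially, with $\bar a$ acting as $a$. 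I would spell out that this is precisely the content of the phrase "$k(G_1/G_0)$-module homomorphism," read through the identification $k\uparrow_{G_0}^{G_1}\cong k\uparrow_{H_0}^{H_1}$ of Remark~3.4, which is exactly what closes the argument.
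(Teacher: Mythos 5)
Your proposal is correct and follows essentially the same route as the paper: define $R(h\alpha g)=R(h)\varphi R(g)$, reduce well-definedness to the single relation $R(a)\varphi=\varphi R(b)$ for $a\alpha=\alpha b$, and verify it on each simple summand $U$ of $M\downarrow_{G_1}^G$ via the identification $G_1/G_0\cong H_1/H_0$, splitting into the cases $U\nmid k\uparrow_{G_0}^{G_1}$ and $U\mid k\uparrow_{G_0}^{G_1}$. Your explicit remarks that the composition axioms are automatic and that the phrase ``$k(G_1/G_0)$-module homomorphism'' encodes the $H_1$-stability of $\varphi(U)$ with trivial $H_0$-action are points the paper leaves implicit, but they do not change the argument.
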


\begin{proof}
We define a functor $R: \mathcal{D}_{\alpha} \rightarrow k$-Vec by letting $R(x) = M$, $R(y) = N$. If $\beta$ is an automorphism of $x$ or $y$, $R(\beta)$ is already defined since $M$ is a $k$Aut$_{\mathcal{D}_{\alpha}} (x)$-module and $N$ is a $k \text{Aut} _{\mathcal{D}_{\alpha}} (y)$-module. Otherwise $\beta$ in Hom$_{\mathcal{D}_{\alpha}} (x,y)$ can be written as $h\alpha g$, where $h \in H$, $g \in G$, $\alpha$ is the representative unfactorizable morphism determining $\mathcal{D}_{\alpha}$. In this case $R(\beta)$ can be defined as $h \varphi g$. The conclusion follows after we show that this definition is well-defined. That is, if $h_1\alpha g_1 = h_2 \alpha g_2$, then $h_1 \varphi g_1 = h_2 \varphi g_2$.\

Since $M$ as a vector space is the direct sum of all the summands of $M \downarrow_{G_1}^G$, it is enough to show that for every summand $U$, and each $u \in U$, we have $h_1\varphi g_1(u) = h_2\varphi g_2 (u)$, or equivalently: $h_2^{-1} h_1 \varphi(u) = \varphi (g_2 g_1^{-1}u)$. The fact that $h_1\alpha g_1= h_2 \alpha g_2$ implies $h_2^{-1}h_1 \alpha =\alpha g_2 g_1^{-1}$, hence $h_2^{-1}h_1 \in H_1$ and $g_2g_1^{-1} \in G_1$, so $g_2g_1^{-1}u \in U$ and $\varphi(g_2g_1^{-1}u) = \varphi_U(g_2g_1^{-1}u)$. Thus it is sufficient to prove that $h_2^{-1} h_1 \varphi_U(u) = \varphi_U (g_2 g_1^{-1}u)$.\

If $\varphi_U=0$, we have $\varphi_U(g_2g_1^{-1}u)= 0 = h_2^{-1}h_1 0 = h_2^{-1}h_1\varphi_U(u)$ trivially. Otherwise, let $\bar{h}$ be the projection of $h_2^{-1}h_1$ into $H_1/H_0$, and $\bar{g}$ be the projection of $g_2^{-1}g_1$ into $G_1/G_0$. Then $g_2^{-1}g_1u = \bar{g}u$, and $h_2^{-1}h_1\varphi_U(u) = \bar{h} \varphi_U(u)$ because $U \mid k\uparrow_{G_0}^{G_1}$ and $\varphi(U) \mid k\uparrow_{H_0}^{H_1}$. Since $\bar{g}$ is sent to $\bar{h}$ by the isomorphism $G_1/G_0 \cong H_1/H_0$, and $\varphi_U$ is a $k(G_1/G_0)$-module homomorphism, we have: $h_2^{-1}h_1\varphi_U(u) = \bar{h} \varphi_U(u) = \varphi_U(\bar{g}u) = \varphi_U(g_2^{-1}g_1u)$. Thus $R$ is well defined, and the conclusion follows.
\end{proof}

\begin{remark}
Let $\varphi: M  \rightarrow N$ be a linear map from a $kG$-module $M$ to a $kH$-module $N$. Let $U$ ($T$, resp.) be a simple summand of $M\downarrow_{G_1}^G$ ($N\downarrow_{H_1}^H$, resp.). Take a fixed decomposition $N = T \oplus N'$ of vector space and let $p: N \rightarrow T$ be the projection. Then $\varphi$ is a direct sum of $\varphi_{UT}$s of the following form:
\begin{equation*}
\xymatrix{ \varphi_{UT}: U \ar@{^{(}->}[r] & M \ar[r]^{\varphi} & N \ar@{->>}[r]^p & T}
\end{equation*}
Lemma 3.5 and Lemma 3.6 tell us that $\varphi$ is given by or gives a representation $R$ of $\mathcal{D}_{\alpha}$ if and only if the following is true: whenever $\varphi_{UT} \neq 0$, then $U \mid k\uparrow_{G_0}^{G_1}$, $T \mid k\uparrow_{H_0}^{H_1}$, and $\varphi_{UT}$ is a $k(G_1/G_0)$-module isomorphism under the given identification.
\end{remark}

\section{Ordinary Quivers of Finite EI Categories}

In this section we will construct the ordinary quiver $Q$ for every finite EI category $\mathcal{C}$ under the hypothesis that the endomorphism groups of all objects in $\mathcal{C}$ have orders invertible in $k$. The algorithm used here to construct $Q$ is expressed in terms of the simple modules of endomorphism groups of objects in $\mathcal{C}$ and their restrictions to subgroups. This is easier and more intuitive than the usual approach which uses the primitive idempotents and the radical of $k\mathcal{C}$. Let us first introduce some preliminary results.

Let $G$ be a finite group whose order is invertible in $k$ and $G_1 \leqslant G$ be a subgroup. For every $kG$-module $M$, we denote the homogeneous space of a simple $kG$-module $V$ in $M$ by $M(V)$, that is, the sum of all submodules of $M$ isomorphic to $V$. If $U$ is a simple summand of $V\downarrow _{G_1}^{G}$, we denote the homogeneous space of $U$ in $M(V)\downarrow_{G_1}^{G}$ by $M(V,U)$. Since $V \downarrow_{G_1}^G$ may contain more than one simple summand isomorphic to $U$, we choose a particular decomposition and index different isomorphic copies of $U$ in $V\downarrow_{G_1}^G$ by natural numbers $s \in \mathbb{N}$. That is:
\begin{equation*}
V \downarrow _{G_1}^G = \bigoplus _{s=1}^t U_s \oplus V',
\end{equation*}
where each $U_s$ is isomorphic to $U$ and $V'$ has no summands isomorphic to $U$.\

Now let $M(V,U,s)$ be the sum of the $s$-th isomorphic copy of $U$ in each $V\downarrow_{G_1}^G$ of $M(V)\downarrow_{G_1}^{G}$, $s \geqslant 1$. Thus if
\begin{equation*}
M(V) = V_1 \oplus V_2 \oplus \ldots \oplus V_m
\end{equation*}
is any decomposition of $M(V)$ and $\theta_i: V \rightarrow V_i$ is a $kG$-module isomorphism, and
\begin{equation*}
V\downarrow_{G_1}^G = U_1 \oplus \ldots \oplus U_t \oplus V'
\end{equation*}
is some fixed decomposition where $U_s \cong U$ for all $1 \leqslant s \leqslant t$ and $V'$ has no summands isomorphic to $U$ as $kG_1$-modules, then $M(V, U, s)$ is defined as $\theta_1 (U_s) \oplus \theta_2(U_s) \oplus \ldots \oplus \theta_m(U_s)$. This definition of $M(V, U, s)$ is dependent on the fixed decomposition of $V \downarrow_{G_1}^G$, but independent of the decomposition of $M(V)$ up to an automorphism of $M(V)$, or equivalently, independent of the particular isomorphisms $\theta_i$. Indeed, if
\begin{equation*}
M(V) = V_1 \oplus V_2 \oplus \ldots \oplus V_m
\end{equation*}
and
\begin{equation*}
M(V) = W_1 \oplus W_2 \ldots \oplus W_n
\end{equation*}
are two decompositions of $M(V)$. Then we get two lists of isomorphisms $\{ \theta_i \}_{i=1}^m$ and $\{ \psi_j\} _{j=1}^n$ with $\theta_i(V) = V_i$ and $\psi_j(V) = W_j$. By the Krull-Schmidt Theorem, we have $m = n$, and there is a permutation $\pi$ of $\{1, \ldots, n\}$ together with a family of $kG$-module isomorphisms $\phi_i: V_i \cong W_{\pi(i)}$. Since both $\psi_i$ and $\phi_i \theta_i$ are isomorphisms from the simple $kG$-module $V$ to the simple $kG$-module $W_i$, we know that $\psi_i$ is a scalar multiple of $\phi_i \theta_i$ by Schur's lemma. Thus
\begin{align*}
& \psi_1(U_s) \oplus \psi_2(U_s) \ldots \oplus \psi_m(U_s) \\
& = \phi_1 (\theta_1 (U_s)) \oplus \phi_2(\theta_2 (U_s)) \oplus \ldots \oplus \phi_m (\theta_m (U_s)) \\
& = \phi ( \theta_1(U_s) \oplus \theta_2(U_s) \oplus \ldots \oplus \theta_m(U_s) )
\end{align*}
where $\phi$ is the direct sum of $\phi_i$, an automorphism of $M(V)$. Thus the automorphism $\phi$ sends the space $M(V,U,s)$ determined by the first decomposition of $M(V)$ to the space $M(V,U,s)$ determined by the second decomposition of $M(V)$.\

Repeating the above process for all simple summands $V$ of $M$, and all simple summands $U$ of $V\downarrow_{G_1}^G$, we can decompose $M$ as the direct sum of all subspaces of the form $M(V,U,s)$. That is:
\begin{equation*}
M = \bigoplus _{V|M} \bigoplus_ {U \mid V \downarrow_{G_1}^G}  \bigoplus _{s \in \mathbb{Z}} M(V,U,s).
\end{equation*}
Notice that for each fixed $V$ and $U$, there are only finitely many $s$ such that $M(V,U,s) \neq 0$. Thus this decomposition is well-defined. Let $p_{V,U,s}: M \rightarrow M(V,U,s)$ be the projection.\

\begin{lemma}
With the above setup, every $kG$-module homomorphism $\phi: M_1 \rightarrow M_2$ as a linear map is the direct sum of all $\phi_{V,U,s}$.
\begin{equation*}
\xymatrix{\phi_{V,U,s}: M_1(V,U,s)\ar@{^{(}->}[r] & M_1 \ar[r]^{\phi} & M_2 \ar@{->>}[r] ^-{p_{V,U,s}} & M_2(V,U,s)}
\end{equation*}
\end{lemma}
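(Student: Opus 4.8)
The plan is to prove the sharper statement that $\phi$ carries each summand $M_1(V,U,s)$ into the matching summand $M_2(V,U,s)$; granting this, the claim that $\phi=\bigoplus_{V,U,s}\phi_{V,U,s}$ is automatic, since a linear map between two direct-sum decompositions that sends each source summand into the corresponding target summand is exactly the direct sum of its restrictions, and the projection $p_{V,U,s}$ in the definition of $\phi_{V,U,s}$ then acts as the identity on $\phi\bigl(M_1(V,U,s)\bigr)$.

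First I would reduce to a single simple $kG$-module $V$. As $k$ is algebraically closed and $|G|$ is invertible in $k$, Maschke's theorem makes $M_1$ and $M_2$ semisimple, and Schur's lemma gives $\text{Hom}_{kG}(V',V)=0$ whenever $V\not\cong V'$. Hence the component of $\phi$ from the isotypic piece $M_1(V')$ to $M_2(V)$ vanishes for $V'\neq V$, so $\phi\bigl(M_1(V)\bigr)\subseteq M_2(V)$, and it suffices to treat $\phi$ on each $M_1(V)$ separately.

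Now I fix the data defining the subspaces: decompositions $M_1(V)=\bigoplus_{i}\theta_i(V)$ and $M_2(V)=\bigoplus_{j}\eta_j(V)$ into copies of $V$, together with the single fixed decomposition $V\downarrow_{G_1}^{G}=U_1\oplus\dots\oplus U_t\oplus V'$ underlying every $M(V,U,s)$. Applying Schur's lemma once more, the $j$-th component of $\phi\theta_i\colon V\to M_2(V)$, read back through $\eta_j^{-1}$, is multiplication by a scalar $c_{ij}\in k$, so $\phi\theta_i=\sum_j c_{ij}\eta_j$. The decisive computation is then immediate: for $u$ in the fixed subspace $U_s\subseteq V\downarrow_{G_1}^{G}$ we have $\phi\bigl(\theta_i(u)\bigr)=\sum_j c_{ij}\eta_j(u)\in\bigoplus_j\eta_j(U_s)=M_2(V,U,s)$, and summing over $i$ gives $\phi\bigl(M_1(V,U,s)\bigr)\subseteq M_2(V,U,s)$.

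I expect the real content to sit in this last step, the invariance of the $s$-grading. Splitting by $V$ and then by $U$ is forced by Schur's lemma applied to $G$ and to $G_1$, since these are isotypic decompositions that every homomorphism respects; but the finer index $s$ is not visible to the $G_1$-structure alone, as restriction to $G_1$ only exposes the $U$-isotypic space $\bigoplus_s M(V,U,s)$, inside which a bare $kG_1$-homomorphism could in principle mix the copies indexed by $s$. What prevents this is that the coefficients $c_{ij}$ produced by the ambient $G$-action are genuine scalars, so $\phi\theta_i$ moves the one fixed subspace $U_s\subseteq V$ rigidly, and that the \emph{same} decomposition of $V\downarrow_{G_1}^{G}$ is used for both $M_1$ and $M_2$. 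I would therefore keep this shared choice of $U_s$ and the $G$-linearity explicitly in play, since it is their interaction that pins down each $s$-component.
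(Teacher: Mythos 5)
Your proposal is correct and follows essentially the same route as the paper's proof: reduce to the $V$-isotypic components via semisimplicity, invoke Schur's lemma to see that the components of $\phi$ between copies of $V$ are scalar multiplications, and observe that scalars rigidly preserve the fixed subspace $U_s\subseteq V\downarrow_{G_1}^{G}$, so $\phi\bigl(M_1(V,U,s)\bigr)\subseteq M_2(V,U,s)$. Your explicit matrix of scalars $(c_{ij})$ is just a more detailed writing-out of the paper's statement that the induced map between any two simple summands is scalar multiplication.
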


\begin{proof}
Since $M_1$ as a vector space is the direct sum of all subspaces of the form $M_1(V,U,s)$, it suffices to show that every subspace $M_1(V,U,s)$ is sent by $\phi$ into $M_2(V,U,s)$. First, since $\phi$ is a module homomorphism, and both $M_1$ and $M_2$ are semisimple, then the homogeneous space $M_1(V) \subseteq M_1$ is sent into the homogeneous space $M_2(V) \subseteq M_2$. If either $M_1(V)$ or $M_2(V)$ is 0, i.e., either $M_1$ or $M_2$ has no summand isomorphic to $V$, the conclusion holds trivially. Otherwise, take a particular simple summand $V_1$ of $M_1(V)$, and a particular simple summand $V_2$ of $M_2(V)$. The induced $kG$-module homomorphism from $V_1$ to $V_2$ by $\phi$ is a scalar multiplication, hence sends the $s$-th isomorphic copy of $U$ in $V_1 \downarrow_{G_1}^G$ into the $s$-th isomorphic copy of $U$ in $V_2\downarrow_{G_1}^G$. Consequently, $M_1(V,U,s)$ is sent into $M_2(V,U,s)$.
\end{proof}

Now we construct the ordinary quiver $Q$ for a finite EI category $\mathcal{C}$. The detailed algorithm is as follows:\\

\noindent \textbf{Step 1}: The vertex set of $Q$ is $\bigsqcup_{x \in \text{Ob} (\mathcal{C})} S_x$, where $S_x$ is a set of representatives of the isomorphism classes of simple $k \text{Aut} _{\mathcal{C}} (x)$-modules.\\

\noindent \textbf{Step 2}: Let $\alpha: x \rightarrow y$ be a representative unfactorizable morphism. Then it determines uniquely:\\
$\bullet$ $G = \text{Aut}_{\mathcal{C}}(x)$, $G_0 = \text{Stab}_{G}(\alpha)$, $G_1 = \text{Stab}_{G}(H\alpha)$;\\
$\bullet$ $H = \text{Aut}_{\mathcal{C}}(y)$, $H_0 = \text{Stab}_{H}(\alpha)$, $H_1 = \text{Stab}_{H}(\alpha G)$;\\
$\bullet$ $\{V_1, \ldots, V_m\}$: the set of pairwise non-isomorphic simple $kG$-modules;\\
$\bullet$ $\{W_1, \ldots, W_n\}$: the set of pairwise non-isomorphic simple $kH$-modules;\\
$\bullet$ $\{U_1, \ldots, U_r\}$: the set of pairwise non-isomorphic simple summands of $k\uparrow_{G_0}^{G_1}$.\\

\noindent \textbf{Step 3}: For each particular simple $kG$-module $V$ in $\{V_1, \ldots, V_m\}$ choose a decomposition $V\downarrow_{G_1}^{G} \cong U_1^{e_1} \oplus \ldots \oplus U_r^{e_r} \oplus X$ where $X$ has no summand isomorphic to any $U_i$. For each simple $kH$-module $W$ in $\{W_1, \ldots, W_n\}$ choose a decomposition $W\downarrow_{H_1}^{H} \cong U_1^{f_1} \oplus \ldots \oplus U_r^{f_r} \oplus Y$ such that $Y$ has no summand isomorphic to any $U_i$. Then we put $\sum_{i=1}^{r} e_if_i$ arrows from the vertex $V$ to the vertex $W$ in $Q$.\\

\noindent \textbf{Step 4}: Repeat Steps 2-4 for all representative unfactorizable morphisms.\

\begin{remark}
We can index an arrow in $Q$ by a list $(\alpha, V, W, U, s, l)$: this arrow is induced by a representative unfactorizable morphism $\alpha: x\rightarrow y$; it starts from a simple $k\text{Aut} _{\mathcal{C}} (x)$-module $V$ and ends at a simple $k\text{Aut} _{\mathcal{C}} (y)$-module $W$. This arrow is associated with the $s$-th isomorphic copy of $U$ in $V\downarrow_{G_1}^{G}$ and the $l$-th isomorphic copy of $U$ in $W\downarrow_{H_1}^H$, where $U$ is a common summand of $V\downarrow_{G_1}^{G}$, $W\downarrow_{H_1}^{H}$ and $k\uparrow_{G_0}^{G_1}$ under the given identification $(G_1/G_0) \cong (H_1/H_0)$. See Lemma 3.3, Lemma 3.5 and Lemma 3.6 for more details.\\

Although the subspaces $M(V,U,s)$ of $M$ and $N(W,U,l)$ of $N$ depend on the particular decomposition, the quiver we constructed by the above algorithm is independent of it. Indeed, we have:
\begin{equation*}
e_i = \text {dim}_k \text{Hom}_{G_1} (U_i, V\downarrow _{G_1}^G)
\end{equation*}
and
\begin{equation*}
f_i = \text {dim}_k \text{Hom}_{H_1} (U_i, V\downarrow _{H_1}^H)
\end{equation*}
since $U_i$ can be viewed as both a $kG_1$-module and a $kH_1$-module by identifying $G_1/G_0$ with $H_1/H_0$. Clearly $e_i$ and $f_i$ are invariant with respect to different decompositions.
\end{remark}

The following two simple examples illustrate our construction.

\begin{example} Let $\mathcal{C}$ be the following finite EI category where: $G$ and $L$ are cyclic groups of order 2 and 3 respectively; $H$ and $K$ are the symmetric groups on 3 letters; $O_1$ is an $(H,G)$-biset with two morphisms generated by $\alpha$, fixed by the trivial subgroup of $G$ and the proper subgroup of order 3 in $H$; $O_2$ is a $(K,H)$-biset of 6 morphisms generated by $\beta$, permuted regularly by $H$ and $K$; $O_3$ is a $(L,H)$-biset with one morphism $\delta$, fixed by both $H$ and $L$.
\end{example}
\centerline{
\xymatrix{G \ar[r]^{O_1} & H \ar[d]^{O_3} \ar[r]^{O_2} & K\\
& L
}}

The reader may check that $\mathcal{C}$ is in fact a finite free EI category. We have: $kG \cong k \oplus S$, the direct sum of two non-isomorphic one dimensional modules;  $kH \cong kK \cong k \oplus \epsilon \oplus V_2 \oplus V_2$, the direct sum of the trivial module $k$, the sign representation $\epsilon$, and two isomorphic copies of 2-dimensional simple modules; $kL \cong k \oplus \omega \oplus \omega^2$, the direct sum of three pairwise non-isomorphic one dimensional modules. Thus the associated quiver $Q$ has 11 vertices.\

To distinguish different vertices, we mark vertices corresponding to $kG$-modules with $\star$, vertices corresponding to $kH$-modules with $\diamond$, vertices corresponding to $kK$-modules with $\circ$, and vertices corresponding to $kL$-modules with $\bullet$. Now we want to put arrows among them. It is not hard to see that $\mathcal{C}$ has 3 representative unfactorizable morphisms $\alpha$, $\beta$ and $\delta$.\

First, let us analyze $\alpha$, which determines arrows from vertices marked by $\star$ to vertices marked by $\diamond$. Obviously, $G_0=1$, $G_1 = G$; $H_0 = C_3 \unlhd H$, $H_1=H$; and $G_1 /G_0 \cong H_1 /H_0 \cong C_2$. Thus $\alpha$ gives rise to no arrow ending at the vertex $\diamond V_2$ since it is not a simple summand of $k \uparrow_{H_0}^{H}$. All other vertices are summands of $k\uparrow _{G_0}^G$ and $k \uparrow _{H_0}^H$. There is one arrow from $\star k$ to $\diamond k$ since by identifying $G_1/G_0$ and $H_1/H_0$, they are the trivial representations of this quotient group. Similarly, there is an arrow from $\star S$ to $\diamond \epsilon$ since they both are the sign representations of this quotient group.\

We omit the detailed analysis of $\beta$ and $\delta$. Finally, we get the associated quiver $Q$ as below:\\
\centerline{
\xymatrix{ & & & \circ k & \circ \epsilon & \circ V_2 \\
\bullet \omega^2 & \bullet \omega & \bullet k & \diamond k \ar[l] \ar[u] & \diamond \epsilon \ar[u] & \diamond V_2 \ar[u] \\
& & & \star k \ar[u] & \star S \ar[u]\\}
}

\begin{example} Let $\mathcal{C}$ be a finite EI category with objects $x$ and $y$; $H=$Aut$_{\mathcal{C}} (y)$ is a copy of the symmetric group $S_3$ on 3 letters; $G=$Aut$_{\mathcal{C}} (x)$ is cyclic of order 2; Hom$_{\mathcal{C}} (x,y)= S_3$ regarded as an $(H,G)$-biset where $H$ acts from the left by multiplication, $G$ acts from the right by multiplication after identifying $G$ with a subgroup $G^{\dagger}$ of $S_3$; Hom$_{\mathcal{C}} (y,x) = \emptyset$.
\end{example}

From the previous example, we find that $Q$ has 5 vertices: $\circ k$ and $\circ S$ corresponding to $x$; $\bullet k$, $\bullet V_2$ and $\bullet \epsilon$ corresponding to $y$. We choose $\alpha = 1 \in S_3$ as the representative unfactorizable morphism and then $G_0 = 1$, $G_1 = G$, $H_0 = 1$, $H_1 = G^{\dagger}$. As before, $kG \cong k \oplus S$, and $kH \cong k \oplus \epsilon \oplus V_2 \oplus V_2$. Moreover, $k\downarrow_{H_1}^{H} \cong k$, $\epsilon \downarrow_{H_1}^{H} \cong S$, $V_2\downarrow_{H_1}^{H} \cong S \oplus k$. Thus the quiver $Q$ is as follows:\\

\centerline{
\xymatrix{ & \circ k \ar[dl] \ar[dr] & & \circ S \ar[dl] \ar[dr] \\
\bullet k & & \bullet V_2 & & \bullet \epsilon\\
&
}
}

In the above examples we find that the associated quivers are acyclic. Moreover, if $\mathcal{C}$ is a finite free EI category, the underlying quiver of a finite EI quiver $\hat{Q}$ generating $\mathcal{C}$ is a subquiver of $Q$. This is always true.\

\begin{proposition}
Let $\mathcal{C}$ be a finite EI category for which the endomorphism groups of all objects have orders invertible in $k$. The associated quiver $Q$ of $\mathcal{C}$ is acyclic. Moreover, if $\mathcal{C}$ is a finite free EI category generated by a finite EI quiver $\hat{Q}$, then $Q$ contains a subquiver isomorphic to the underlying quiver of $\hat{Q}$.
\end{proposition}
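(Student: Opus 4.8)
The plan is to treat the two assertions separately. For acyclicity I would exploit the skeletal convention, and for the embedding I would use the trivial module at each object together with the arrow-labelling of Remark 4.2.

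First I would establish that the objects of $\mathcal{C}$ form a poset under the relation $x \leqslant y \iff \text{Hom}_{\mathcal{C}}(x,y) \neq \emptyset$. Transitivity is just composability of morphisms, and antisymmetry is precisely the observation recorded before Definition 2.1: if $x \neq y$ and $\text{Hom}_{\mathcal{C}}(x,y) \neq \emptyset$ then $\text{Hom}_{\mathcal{C}}(y,x) = \emptyset$, since $\mathcal{C}$ is skeletal. By Steps 2--3 of the algorithm every arrow of $Q$ is induced by a representative unfactorizable morphism $\alpha \colon x \to y$; as unfactorizable morphisms are non-isomorphisms and $\mathcal{C}$ is skeletal, necessarily $x \neq y$, so $x < y$ strictly. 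Thus each arrow of $Q$ strictly increases the underlying object in the poset, whence no directed cycle can occur and $Q$ is acyclic. I expect this part to be routine.

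For the second assertion I would first reduce to the canonical finite EI quiver of $\mathcal{C}$: by the remark following Proposition 2.6 the underlying quiver of any generating $\hat{Q}$ is a subquiver of the underlying quiver of the finite EI quiver of $\mathcal{C}$, whose arrows biject with the representative unfactorizable morphisms. It therefore suffices to embed the latter into $Q$. On vertices I would send each object $x$ to the vertex of $Q$ given by the trivial simple $k\text{Aut}_{\mathcal{C}}(x)$-module $k$; since the vertex set of $Q$ is the disjoint union $\bigsqcup_x S_x$ and $k \in S_x$ always, this is a well-defined injection on objects. To a representative unfactorizable morphism $\alpha \colon x \to y$ I would attach the arrow of $Q$ labelled, in the notation of Remark 4.2, by $(\alpha, k, k, k, 1, 1)$.

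The key verification is that taking $V = W = k$ in Step 3 yields exactly one arrow for $\alpha$. The trivial module restricts to the trivial module, so $k\downarrow_{G_1}^{G} = k$ contains the trivial $kG_1$-module with multiplicity $e = 1$ and nothing else, and likewise $k\downarrow_{H_1}^{H} = k$ gives $f = 1$; moreover the trivial module is a summand of $k\uparrow_{G_0}^{G_1}$ (Frobenius reciprocity plus semisimplicity, using the invertibility convention), so it is one of the admissible $U_i$, and the trivial $k(G_1/G_0)$-module is identified with the trivial $k(H_1/H_0)$-module under Lemma 3.3. Hence $\sum_i e_i f_i$ contributes exactly $1$ here. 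Source/target compatibility is immediate, and for injectivity I would note that distinct representative unfactorizable morphisms lie in distinct orbits and, by Remark 4.2, carry their $\alpha$ as part of the arrow label, so they produce distinct arrows of $Q$ even when they share endpoints. The main obstacle I anticipate is exactly this bookkeeping---guaranteeing that the single arrow extracted from each orbit via the trivial modules is genuinely distinct across orbits rather than being merged---which is why the $(\alpha, V, W, U, s, l)$ indexing of Remark 4.2 is essential; the representation-theoretic input (triviality of restriction and the trivial summand of the permutation module) is comparatively soft.
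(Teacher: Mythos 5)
Your proof is correct and follows essentially the same route as the paper: acyclicity comes from the skeletal property (the paper argues by contradiction, turning a cycle in $Q$ into objects $x_1,x_2$ with both $\text{Hom}_{\mathcal{C}}(x_1,x_2)$ and $\text{Hom}_{\mathcal{C}}(x_2,x_1)$ nonempty, while you package the same observation as a poset argument), and the embedding uses the trivial modules $k_x$ as vertices with one arrow per representative unfactorizable morphism, exactly as in the paper. The only difference is that you spell out, via Frobenius reciprocity and the trivial summand of $k\uparrow_{G_0}^{G_1}$, the verification that $V=W=k$ yields exactly one arrow --- a step the paper asserts without proof --- which is a welcome addition rather than a deviation.
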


\begin{proof}
It is obvious that the associated quiver $Q$ of $\mathcal{C}$ contains no loops. Now let $S_1 \rightarrow \ldots \rightarrow S_n=S_1$ be an oriented cycle with at least 2 vertices (therefore $n \geqslant 3$), where $S_i$ is a simple summand of $k$Aut$_{\mathcal{C}} (x_i)$, $x_i \in \text{Ob} (\mathcal{C})$, $1 \leqslant i \leqslant n$. Our construction shows that each arrow is induced by a unfactorizable morphism in $\mathcal{C}$. Therefore we get a string of unfactorizable morphisms
\begin{equation*}
\xymatrix{x_1 \ar[r]^-{\alpha_2} & \ldots \ar[r]^-{\alpha_{n-1}} & x_{n-1} \ar[r]^-{\alpha_n} & x_n=x_1}.
\end{equation*}
Notice that $x_2 \neq x_1$ since $\alpha_2$ is unfactorizable, hence not an automorphism. But this implies that both Hom$_{\mathcal{C}} (x_1, x_2)$ and Hom$_{\mathcal{C}} (x_2, x_1)$ are non-empty, which is a contradiction (see the paragraphs before Definition 2.1), so the first statement is correct.\

Now suppose that $\mathcal{C}$ is a finite free EI category generated by a finite EI quiver $\hat{Q}$. We know that vertices in $\hat{Q}$ are exactly objects in $\mathcal{C}$, and each object $x$ in $\mathcal{C}$ gives a unique vertex $k_x$ in $Q$, the trivial $k \text{Aut} _{\mathcal{C}} (x)$-module. Furthermore, every arrow $x \rightarrow y$ in $\hat{Q}$ corresponds to a unique representative unfactorizable morphism $\alpha: x \rightarrow y$, which in turn determines, by our construction, a unique arrow from $k_x$ to $k_y$ in $Q$. By identifying $x$ with $k_x$, we get the second statement.
\end{proof}

This associated quiver $Q$ of $\mathcal{C}$ is actually the ordinary quiver of $k\mathcal{C}$. To prove this, let us consider the radical of $k\mathcal{C}$.

\begin{proposition}
Let $\mathcal{C}$ be a finite EI category for which the endomorphism groups of all objects have orders invertible in $k$. Then $\rad k\mathcal{C}$ has as a basis the non-isomorphisms in $\mathcal{C}$ and $\rad k\mathcal{C} / \rad^2 k\mathcal{C}$ has as a basis the images of all unfactorizable morphisms in $\mathcal{C}$ respectively.
\end{proposition}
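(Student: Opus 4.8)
The plan is to identify $\rad k\mathcal{C}$ explicitly. Let $I \subseteq k\mathcal{C}$ be the $k$-subspace spanned by all non-isomorphisms. First I would check that $I$ is a two-sided ideal. Using the standing convention that $\mathcal{C}$ is connected and skeletal, the relation $x \leqslant y \iff \text{Hom}_{\mathcal{C}}(x,y) \neq \emptyset$ is a partial order on $\text{Ob}(\mathcal{C})$; antisymmetry is precisely the observation recorded in the paragraphs before Definition 2.1. Hence a non-isomorphism $\alpha\colon x \to y$ forces $x < y$ strictly, and composing $\alpha$ on either side with any basis morphism yields either $0$ or a morphism that again moves strictly upward in this order, so it is again a non-isomorphism. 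Thus $I$ is an ideal, and $k\mathcal{C}/I$ has as basis the images of the isomorphisms. Since isomorphisms of distinct objects are never composable, this quotient is isomorphic as an algebra to the product $\prod_{x \in \text{Ob}(\mathcal{C})} k\text{Aut}_{\mathcal{C}}(x)$ of endomorphism group algebras.

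Next I would pin down the two inclusions that give $\rad k\mathcal{C} = I$. For $\rad k\mathcal{C} \subseteq I$: by the standing hypothesis every $|\text{Aut}_{\mathcal{C}}(x)|$ is invertible in $k$, so Maschke's theorem makes each $k\text{Aut}_{\mathcal{C}}(x)$ semisimple; therefore $k\mathcal{C}/I$ is semisimple and $\rad k\mathcal{C} \subseteq I$. For $I \subseteq \rad k\mathcal{C}$: I would show $I$ is nilpotent. A nonzero product of $n$ composable non-isomorphisms produces a strictly increasing chain $x_0 < x_1 < \cdots < x_n$ in the poset above, hence $n+1$ distinct objects; writing $N = |\text{Ob}(\mathcal{C})|$ this forces $I^{N} = 0$. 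A nilpotent ideal lies in the radical, so $I \subseteq \rad k\mathcal{C}$. Combining the two inclusions gives $\rad k\mathcal{C} = I$, whose basis is precisely the non-isomorphisms of $\mathcal{C}$, which is the first statement.

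For the second statement I would compute $\rad^2 k\mathcal{C} = I^2$. Since products of basis morphisms are again basis morphisms or zero, $I^2$ is spanned by the morphisms expressible as a composite $\beta \circ \alpha$ of two non-isomorphisms; by Definition 2.3 these are exactly the factorizable non-isomorphisms, whereas the remaining basis elements of $I$ are the unfactorizable ones. Hence the unfactorizable morphisms are precisely the basis elements of $I$ that do not lie in the spanning set of $I^2$, and therefore they descend to a basis of $I/I^2 = \rad k\mathcal{C} / \rad^2 k\mathcal{C}$.

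The ideal-and-quotient bookkeeping of the first paragraph is routine. The two genuinely load-bearing steps are the inclusions defining the radical: the semisimplicity of the quotient, which is the only place the invertibility hypothesis enters, and the nilpotency of $I$, which rests entirely on the acyclicity of the object poset coming from the connected skeletal EI structure. I expect the nilpotency argument — making precise that a nonzero length-$n$ product of non-isomorphisms forces $n+1$ distinct objects — to be the main point requiring care, though it becomes short once the partial order on objects is in hand.
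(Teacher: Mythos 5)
Your proof is correct and follows essentially the same route as the paper: identify the span $I$ of the non-isomorphisms as a two-sided ideal, get $\rad k\mathcal{C} \subseteq I$ from semisimplicity of the quotient $\bigoplus_x k\mathrm{Aut}_{\mathcal{C}}(x)$ via Maschke, get $I \subseteq \rad k\mathcal{C}$ from nilpotency, and then observe that $I^2$ is spanned exactly by the factorizable non-isomorphisms. The only difference is that you spell out the details (the object poset, the strict-chain bound $I^N = 0$) that the paper dismisses as clear, which is a fair expansion rather than a different argument.
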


\begin{proof}
Let $\Lambda$ be the subspace of $k\mathcal{C}$ spanned by all non-isomorphisms in $\mathcal{C}$. Clearly, $\Lambda$ is an two-sided ideal of $k\mathcal{C}$ and it has the non-isomorphisms as a basis. Moreover, $k\mathcal{C} / \Lambda \cong \bigoplus_{x \in \text{Ob} (\mathcal{C})} {k\text{End}_{\mathcal{C}} (x)}$. Since the endomorphism groups of all objects in $\mathcal{C}$ have orders invertible in $k$, all groups algebras are semisimple. Thus $k\mathcal{C} / \Lambda$ is semisimple, and $\rad k\mathcal{C} \subseteq \Lambda$. On the other hand, because $\mathcal{C}$ has only finitely many distinct objects, $\Lambda$ is nilpotent, so $\Lambda \subseteq \rad k\mathcal{C}$. This proves $\Lambda = \rad k\mathcal{C}$.\

As an ideal of $k\mathcal{C}$, $\rad^2 k\mathcal{C}$ contains all factorizable morphisms in $\mathcal{C}$, and no unfactorizable morphism. Consequently, $\rad k\mathcal{C} / \rad^2{k\mathcal{C}}$ is spanned by the images of all unfactorizable morphisms in $\mathcal{C}$. Actually, these images form a basis of $\rad k\mathcal{C} / \rad^2{k\mathcal{C}}$.
\end{proof}

Now we restate and prove Theorem 1.1.

\begin{theorem}
Let $\mathcal{C}$ be a finite EI category for which the endomorphism groups of all objects have orders invertible in the field $k$. Then the quiver $Q$ constructed by our algorithm is precisely the ordinary quiver of the category algebra $k\mathcal{C}$. Moreover, $k\mathcal{C}$ has the same ordinary quiver as that of $k \hat{\mathcal{C}}$, the category algebra of the free EI cover $\hat{\mathcal{C}}$ of $\mathcal{C}$.
\end{theorem}
\begin{proof}
Let $Q'$ be the ordinary quiver of $k\mathcal{C}$. We only need to show that there is a bijection $\pi$ from the vertices set of $Q$ to the vertices set of $Q'$, and for every pair of vertices $v$ and $w$ in $Q$, the number of arrows from $v$ to $w$ is the same as that of the arrows from $\pi(v)$ to $\pi(w)$. By Corollary 4.5 of ~\cite{Webb1}, a primitive idempotent of $k\mathcal{C}$ is exactly a primitive idempotent of $k \text{End} _{\mathcal{C}} (x)$ for some object $x$. Moreover, if $e_x$ and $e_y$ are two primitive idempotents of $k\mathcal{C}$ associated with objects $x$ and $y$ respectively, then $k\mathcal{C}e_x \cong k\mathcal{C}e_y$ if and only if $x = y$ (since $\mathcal{C}$ is skeletal) and $k \text{End} _{\mathcal{C}} (x) e_x \cong k \text{End} _{\mathcal{C}} (x) e_y$ (see the paragraph before Proposition 4.3 of ~\cite{Webb1}). Thus the vertices of $Q'$, formed by isomorphism classes of indecomposable projective $k\mathcal{C}$-modules, can be parametrized by isomorphism classes of indecomposable $k \text{End} _{\mathcal{C}} (x)$-modules, where $x$ ranges over the objects in $\mathcal{C}$. But because every $k \text{End} _{\mathcal{C}} (x)$ is semisimple, indecomposable projective $k \text{End} _{\mathcal{C}} (x)$-modules and simple $k \text{End} _{\mathcal{C}} (x)$-modules coincide. Therefore $Q$ and $Q'$ have the same vertices.\

Now let $e_1$ and $e_2$ be two primitive idempotents of $k\mathcal{C}$ corresponding to objects $x$ and $y$ respectively. Let $v = [k\mathcal{C}e_1]$ and $w = [k\mathcal{C}e_2] $ be the corresponding vertices in $Q'$ (or $Q$ since they have the same vertices). We know that the numbers of arrows in $Q'$ from $v$ to $w$ is the dimension of the $k$-space $e_2 (\text {rad} k\mathcal{C} / \text{rad}^2 k\mathcal{C}) e_1$. Let $\alpha_1, \alpha_2, \ldots, \alpha_l$ be all representative unfactorizable morphisms from $x$ to $y$. We verify that
\begin{equation*}
e_2 (\text {rad} k\mathcal{C} / \text{rad}^2 k\mathcal{C}) e_1 \cong \bigoplus _{i=1}^l e_2(kH\alpha_iG)e_1
\end{equation*}
by the previous proposition. Thus we only need to check that each representative unfactorizable morphism from $x$ to $y$ gives the same number of arrows from $v$ to $w$ as that given by our algorithm.\

Take a particular representative unfactorizable $\alpha$ in the list $\alpha_1, \alpha_2, \ldots, \alpha_l$. Let us compute the dimension of $e_2 (kH\alpha G) e_1$. As we mentioned before, $\alpha$ determines groups $H_0 \unlhd H_1 \leqslant H$, $G_0 \unlhd G_1 \leqslant G$ with $H_0 \alpha = \alpha G_0 = \alpha$ and $H_1\alpha = \alpha
G_1$, and $k(G_1/G_0) \cong k(H_1/H_0)$. We identify these two modules and let $\{ u_1, \ldots, u_n \}$ be a list of primitive idempotents of $k(H_1/H_0)$ such that every simple sumand of $k(H_1/H_0)$ is isomorphic to some $k(H_1/H_0)u_i$, and $k(H_1/H_0) u_i \ncong k(H_1/H_0) u_j$ if $i \neq j$ for $ 1 \leqslant i,j \leqslant n$. Notice that all $u_i$ are primitive idempotents of both $kG_1$ and $kH_1$ under the identification. Let $U_i = k(H_1/H_0) u_i$,
\begin{equation*}
kGe_1 \downarrow _{G_1}^G \cong U_1^{a_1} \oplus \ldots \oplus U_n^{a_n} \oplus U
\end{equation*}
and
\begin{equation*}
kHe_2 \downarrow _{H_1}^H \cong U_1^{b_1} \oplus \ldots \oplus U_n^{b_n} \oplus V
\end{equation*}
such that both $U \cong kG_1u$ and $V \cong kH_1v$ have no summand isomorphic to any $U_i$, where $u$ and $v$ are idempotents of $kG$ and $kH$ respectively. Then by using both the right and left module structure of $kH\alpha G$ we have:
\begin{align}
e_2 kH \alpha G e_1 & \cong \text{Hom}_{kH} (kHe_2, kH\alpha Ge_1) \nonumber \\
& \cong \text{Hom}_{kH} ( kHe_2, \text{Hom}_{kG} (e_1kG, kH\alpha G)) \nonumber \\
& \cong \text{Hom}_{kH} ( kHe_2, \text{Hom}_{kG} (e_1kG, \bigoplus_{h \in H/H_1} kh\alpha G)) \nonumber \\
& \cong \text{Hom}_{kH} ( kHe_2, \text{Hom}_{kG} (e_1kG, k\alpha G) \uparrow_{H_1}^H) \nonumber \\
& \cong \text{Hom}_{kH} ( kHe_2, \text{Hom}_{kG} (e_1kG, \bigoplus _{g \in G \backslash G_1} k\alpha gG_1 ) \uparrow_{H_1}^H) \nonumber \\
& \cong \text{Hom}_{kH} ( kHe_2, \text{Hom}_{kG} (e_1kG,  k\alpha G_1 \uparrow_{G_1}^G ) \uparrow_{H_1}^H ) \nonumber \\
& \cong \text{Hom}_{kH} ( kHe_2, \text{Hom}_{kG_1} (e_1kG \downarrow _{G_1}^{G},  k\alpha G_1 ) \uparrow_{H_1}^H) \nonumber \\
& \cong \text{Hom}_{kH_1} ( kHe_2 \downarrow _{H_1}^H, \text{Hom}_{kG_1} (e_1kG \downarrow _{G_1}^{G},  k\alpha G_1 ))
\end{align}

The last two isomorphisms come from the Frobenius Reciprocity. Also notice that $k\alpha G_1 = kH_1 \alpha$ is a $(kH_1, kG_1)$-bimodule, which is isomorphic to $k(G_1/G_0)$ as a right $kG_1$-module and is isomorphic to $k(H_1/H_0)$ as a left $kH_1$-module. Moreover, the decomposition
\begin{equation*}
kGe_1 \downarrow _{G_1}^G \cong \bigoplus _{i=1}^n (kG_1u_i)^{a_i} \oplus kG_1u
\end{equation*}
as a left $kG_1$-module implies the decomposition
\begin{equation*}
e_1kG \downarrow_{G_1}^G \cong \bigoplus_{i=1}^n (u_ikG_1)^{a_i} \oplus ukG_1
\end{equation*}
as a right $kG_1$-module. Thus:
\begin{align*}
\text{Hom} _{kG_1} (e_1kG \downarrow _{G_1}^{G}, k\alpha G_1) & \cong \text{Hom}_ {kG_1} (\bigoplus _{i=1}^n (u_ikG_1)^{a_i} \oplus ukG_1, k\alpha G_1) \\
& \cong \text{Hom} _{kG_1} (\bigoplus _{i=1}^n (u_i kG_1)^{a_i}, k\alpha G_1) \cong \bigoplus _{i=1}^n (k\alpha G_1 u_i)^{a_i} \\
& \cong \bigoplus _{i=1}^n (kH_1\alpha u_i)^{a_i} \cong \bigoplus _{i=1}^n U_i^{a_i}
\end{align*}

Consequently, we have:
\begin{align*}
\text{R.H.S of (4.1)} & \cong \text{Hom} _{kH_1} (kHe_2 \downarrow _{H_1}^H, \bigoplus _{i=1}^n U_i^{a_i}) \\
& \cong \text{Hom} _{kH_1} (\bigoplus _{i=1}^n U_i^{b_i} \oplus V, \bigoplus _{i=1}^n U_i^{a_i}) \\
& \cong \text{Hom} _{kH_1} (\bigoplus _{i=1}^n U_i^{b_i}, \bigoplus _{i=1}^n U_i^{a_i}) \\
& \cong \bigoplus _{i=1}^n ( \text{Hom} _{kH_1} (U_i, U_i) )^{a_ib_i}
\end{align*}

The right hand side of the above identity has dimension $\sum_{i=1}^n a_ib_i$, which is the same as the number of arrows from the vertex $v = [k \mathcal{C} e_1]$ to the vertex $w = [k \mathcal{C} e_2]$ given by $\alpha$ in our construction (see Step 2 and Step 3 of our algorithm and Remark 4.2). This proves the first statement.\

By Proposition 2.9, we know that $\mathcal{C}$ and its free EI cover $\hat{\mathcal{C}}$ have the same objects. Moreover, for each pair of objects $x$ and $y$, we have End$_{\mathcal{C}} (x) = \text{End} _{\hat{\mathcal{C}}} (x)$, End$_{\mathcal{C}} (y) = \text{End} _{\hat{\mathcal{C}}} (y)$, and the unfactorizable morphisms from $x$ to $y$ in $\mathcal{C}$ are the same as those in $\hat{\mathcal{C}}$. But from the above proof we find that these data completely determine their ordinary quivers. Thus $k\hat{\mathcal{C}}$ and $k\mathcal{C}$ have the same ordinary quiver, as we claimed.
\end{proof}

\section{Hereditary Category Algebras}

It is well known that the group algebra $kG$ of a finite group $G$ is hereditary if and only if the order of $G$ is invertible in $k$, and the path algebra $kQ$ of a finite acyclic quiver $Q$ is always hereditary. Since a finite EI category $\mathcal{C}$ can be viewed as a combination of several finite groups and a finite acyclic quiver, it is convincing that a characterization of hereditary category algebras $k\mathcal{C}$ must be related closely to the following conditions: the endomorphism groups of all objects in $\mathcal{C}$ have orders invertible in $k$, and $\mathcal{C}$ is a finite free EI category. These two conditions actually characterize finite EI categories among hereditary category algebras and this is the content of Theorem 1.2.\

Let us state some preliminary results which will be used in the proof of Theorem 1.2.

\begin{lemma}
Let $\alpha_1$ and $\alpha_2$ be two different unfactorizable morphisms in a finite free EI category $\mathcal{C}$. Then as $k\mathcal{C}$-modules, either $k\mathcal{C} \alpha_1 = k\mathcal{C} \alpha_2$, or $k\mathcal{C} \alpha_1 \cap k\mathcal{C} \alpha_2 = 0$.
\end{lemma}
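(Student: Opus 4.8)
The plan is to analyze the two cyclic $k\mathcal{C}$-modules $k\mathcal{C}\alpha_1$ and $k\mathcal{C}\alpha_2$ by understanding their basis in terms of morphisms in $\mathcal{C}$. Suppose $\alpha_1: x_1 \to y_1$ and $\alpha_2: x_2 \to y_2$ are unfactorizable. First I would observe that $k\mathcal{C}\alpha_i$ is spanned by all morphisms of the form $\beta \circ \alpha_i$, where $\beta$ ranges over composable morphisms (including automorphisms of $y_i$, which already make $k\mathcal{C}\alpha_i$ contain the whole $\mathrm{Aut}_{\mathcal{C}}(y_i)$-orbit, and longer composites). By the UFP (Proposition 2.8 and Definition 2.7), every element of such a composite has a \emph{unique} factorization into unfactorizables up to the trivial automorphism relations, and the first unfactorizable factor (reading from the source) is forced to lie in the $(\mathrm{Aut}_{\mathcal{C}}(y_i),\mathrm{Aut}_{\mathcal{C}}(x_i))$-orbit of $\alpha_i$. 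This is the crux: the UFP lets me \emph{read off} from any basis morphism in $k\mathcal{C}\alpha_i$ which orbit of unfactorizables it begins with.

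Next I would set up the dichotomy. If $\alpha_1$ and $\alpha_2$ lie in the \emph{same} $(\mathrm{Aut}_{\mathcal{C}}(y),\mathrm{Aut}_{\mathcal{C}}(x))$-orbit of unfactorizable morphisms, i.e. $\alpha_2 = h\alpha_1 g$ for some $h \in \mathrm{Aut}_{\mathcal{C}}(y)$ and $g \in \mathrm{Aut}_{\mathcal{C}}(x)$, then since $g$ is invertible we get $\alpha_2 g^{-1} = h\alpha_1 \in k\mathcal{C}\alpha_1$, and more directly $\alpha_2 = h\alpha_1 g$ shows $\alpha_2 \in k\mathcal{C}\alpha_1 g$; but I need equality of the \emph{left} ideals. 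The clean way is to note $\alpha_2 = (h\alpha_1)g$, so $\alpha_2 g^{-1} = h\alpha_1$, giving $\alpha_2 g^{-1} \in k\mathcal{C}\alpha_1$; combined with the symmetric argument, I would show each generator lies in the other module's span, so $k\mathcal{C}\alpha_1 = k\mathcal{C}\alpha_2$. (Here I use that right multiplication by the automorphism $g$ permutes the spanning set of $k\mathcal{C}\alpha_1$, since $k\mathcal{C}\alpha_1$ contains $h'\alpha_1$ for every $h'$ and hence contains $h'\alpha_1 = (h'h^{-1})(h\alpha_1)$, matching the orbit.)

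For the disjoint case, suppose $\alpha_1$ and $\alpha_2$ lie in \emph{different} orbits. I would argue that the spanning morphisms of $k\mathcal{C}\alpha_1$ and of $k\mathcal{C}\alpha_2$ are disjoint subsets of the basis Mor$(\mathcal{C})$ of $k\mathcal{C}$. Concretely, any nonzero morphism in $k\mathcal{C}\alpha_i$ is (a scalar multiple of) a composite beginning with an unfactorizable morphism in the orbit of $\alpha_i$; by the UFP this leading orbit is an invariant of the morphism, so a morphism appearing in $k\mathcal{C}\alpha_1$ cannot also appear in $k\mathcal{C}\alpha_2$ when the orbits differ. Since distinct morphisms form a basis of $k\mathcal{C}$, disjoint spanning sets force $k\mathcal{C}\alpha_1 \cap k\mathcal{C}\alpha_2 = 0$.

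The main obstacle I expect is making the phrase ``leading unfactorizable factor is well-defined'' fully rigorous, because of the automorphism ambiguity in the UFP: a given composite morphism $\delta$ equals many products $\beta_m \circ \cdots \circ \beta_1$, and its first factor $\beta_1$ is only determined up to right multiplication by an automorphism of $x_1$ and the orbit relation. I would handle this by working at the level of $(\mathrm{Aut}_{\mathcal{C}}(y_1),\mathrm{Aut}_{\mathcal{C}}(x_1))$-orbits rather than individual morphisms: the UFP guarantees that $m$, the sequence of intermediate objects, and the orbit of each factor are genuine invariants of $\delta$, even though the factors themselves are not. Thus the assignment ``$\delta \mapsto$ orbit of its first unfactorizable factor'' is well-defined on all factorizable morphisms and on unfactorizables themselves, and this is exactly the invariant that separates the two left ideals in the disjoint case and identifies them in the common-orbit case.
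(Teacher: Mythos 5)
There is a genuine error in your proof: your dichotomy is indexed by the wrong orbits, and the ``same orbit'' branch asserts something false. The correct dichotomy (and the one the paper's proof implicitly runs on) uses \emph{left} $\mathrm{Aut}_{\mathcal{C}}(y)$-orbits: if the spanning sets $B_i=\{\delta\alpha_i\}$ meet, the UFP forces $\alpha_1=h\alpha_2$ with $h\in\mathrm{Aut}_{\mathcal{C}}(y)$ --- Definition 2.7 relates the first factors of two decompositions by $\beta_1=h_1\alpha_1$ only, with no right automorphism on the first factor --- and then $B_1=B_2$ follows at once. You instead split on $(\mathrm{Aut}_{\mathcal{C}}(y),\mathrm{Aut}_{\mathcal{C}}(x))$-biset orbits and claim that $\alpha_2=h\alpha_1 g$ implies $k\mathcal{C}\alpha_1=k\mathcal{C}\alpha_2$. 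That is false. Take $\mathcal{C}$ with objects $x,y$, $\mathrm{Aut}_{\mathcal{C}}(x)=\{1,g\}\cong C_2$, $\mathrm{Aut}_{\mathcal{C}}(y)=1$, $\mathrm{Hom}_{\mathcal{C}}(x,y)=\{a,b\}$ a free right $C_2$-set with $ag=b$, and $\mathrm{Hom}_{\mathcal{C}}(y,x)=\emptyset$. This is a finite free EI category, $a$ and $b$ are unfactorizable and lie in a single biset orbit, yet $k\mathcal{C}a=ka$ and $k\mathcal{C}b=kb$ are distinct lines with $k\mathcal{C}a\cap k\mathcal{C}b=0$. (The lemma still holds here, but through its second alternative; your proof asserts the first.)

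The precise point of failure is your parenthetical claim that ``right multiplication by the automorphism $g$ permutes the spanning set of $k\mathcal{C}\alpha_1$.'' It does not: $k\mathcal{C}\alpha_1$ is only a left submodule, and right multiplication by $g$ carries $B_1$ onto the spanning set of $k\mathcal{C}\alpha_1 g$, which coincides with $B_1$ only when $\alpha_1 g\in\mathrm{Aut}_{\mathcal{C}}(y)\alpha_1$, i.e.\ when $g$ lies in $\mathrm{Stab}_G(H\alpha_1)=G_1$ in the notation of Section 3. Consequently $\alpha_2 g^{-1}=h\alpha_1\in k\mathcal{C}\alpha_1$ does not yield $\alpha_2\in k\mathcal{C}\alpha_1$. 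The repair is small, and your own machinery supplies it: by the UFP the leading factor of any element of $B_i$ is well defined up to left multiplication by an automorphism of its target (not merely up to biset orbit, as you state), so the leading \emph{left} orbit is the invariant. Running your dichotomy on left orbits gives the full proof: if $\alpha_2\in\mathrm{Aut}_{\mathcal{C}}(y)\alpha_1$ then $B_1=B_2$ trivially; otherwise the invariant shows $B_1\cap B_2=\emptyset$, whence $k\mathcal{C}\alpha_1\cap k\mathcal{C}\alpha_2=0$ since distinct morphisms are linearly independent in $k\mathcal{C}$. This is essentially the paper's argument, which never case-splits at all: it assumes $B_1\cap B_2\neq\emptyset$, derives $\alpha_1=h\alpha_2$ from the UFP, and concludes $B_1=B_2$.
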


\begin{proof}
For $i = 1,2$, let $B_i = \{\delta \alpha_i \mid \delta \in \text{Mor} (\mathcal{C}), s(\delta) = t(\alpha_i) \}$ be the set of all composites starting with $\alpha_i$, where $s(\delta)$ and $t(\alpha_i)$ are the source of $\delta$ and the target of $\alpha_i$ respectively. Then $B_i$ spans $k\mathcal{C} \alpha_i$. If we can that prove $B_1$ and $B_2$ are either the same, or have empty intersection, then the conclusion follows.\

Suppose that $B_1 \cap B_2$ is nonempty. Then we can find $\beta \in B_1 \cap B_2$ and $\beta$ can be expressed as $\delta_1 \alpha_1 = \delta_2 \alpha_2$. But $\mathcal{C}$ is a finite free EI category and satisfies the UFP, so $\alpha_1$ and $\alpha_2$ have the same source $x$ and the same target $y$. Moreover, there is an automorphism $h \in \text{End} _{\mathcal{C}} (y)$ such that $\alpha_1 = h\alpha_2$ and $\delta_1 = \delta_2 h^{-1}$. In particular, $\alpha_1 \in B_2$ and $\alpha_2 \in B_1$. Thus $B_1 = B_2$, which completes the proof.\
\end{proof}

\begin{lemma}
Let $\alpha: x \rightarrow y$ be an unfactorizable morphism in a finite free EI category for which all endomorphisms groups of objects have orders invertible in $k$. Then the cyclic $k\mathcal{C}$-module $k\mathcal{C} \alpha$ is projective.
\end{lemma}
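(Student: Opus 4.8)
The plan is to exhibit $k\mathcal{C}\alpha$ as a direct summand of a projective module assembled from the identity morphism $1_y$. Write $H=\text{Aut}_{\mathcal{C}}(y)$ and recall $H_0=\text{Stab}_H(\alpha)$. Since the identity morphisms $\{1_z\mid z\in\text{Ob}(\mathcal{C})\}$ form a complete set of orthogonal idempotents of $k\mathcal{C}$, the module $k\mathcal{C}1_y$ is a direct summand of $k\mathcal{C}$ and hence projective; moreover $1_yk\mathcal{C}1_y=kH$, so $k\mathcal{C}1_y$ is a $(k\mathcal{C},kH)$-bimodule. As $\alpha=1_y\alpha$, right multiplication by $\alpha$ gives a surjection of left $k\mathcal{C}$-modules $\rho\colon k\mathcal{C}1_y\twoheadrightarrow k\mathcal{C}\alpha$, $\eta\mapsto\eta\alpha$. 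The goal is to split this surjection off a projective.

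The crux is to compute $\ker\rho$ using the hypothesis that $\mathcal{C}$ is free. I claim that for morphisms $\eta_1,\eta_2$ with source $y$ one has $\eta_1\alpha=\eta_2\alpha$ if and only if $\eta_2=\eta_1h$ for some $h\in H_0$. The ``if'' direction is clear because $h\alpha=\alpha$. For ``only if'', decompose $\eta_1$ and $\eta_2$ into unfactorizable morphisms (Proposition~2.6) and append $\alpha$ as the rightmost factor; this yields two factorizations of the single morphism $\eta_1\alpha=\eta_2\alpha$ into unfactorizables. Since $\mathcal{C}$ is free it satisfies the UFP (Proposition~2.8), so the two factorizations have equal length and agree up to automorphisms; comparing the rightmost factors gives $\alpha=h_1\alpha$, forcing $h_1\in H_0$ and $\eta_2=\eta_1h_1^{-1}$. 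Therefore a $k$-basis of $k\mathcal{C}\alpha$ is indexed by the right $H_0$-orbits of the morphisms with source $y$, which is precisely to say that $\rho$ induces an isomorphism
\[
k\mathcal{C}\alpha \;\cong\; k\mathcal{C}1_y\otimes_{kH_0}k \;\cong\; k\mathcal{C}1_y\otimes_{kH}kH\alpha,
\]
where the last identification uses $kH\alpha\cong k[H/H_0]\cong kH\otimes_{kH_0}k$ as left $kH$-modules.

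Granting this isomorphism, projectivity is formal. By the standing Convention the order of $H$ is invertible in $k$, so $kH$ is semisimple and the cyclic module $kH\alpha$ is a direct summand of a free module $kH^{\oplus n}$. Applying the additive functor $k\mathcal{C}1_y\otimes_{kH}(-)$ then exhibits $k\mathcal{C}\alpha\cong k\mathcal{C}1_y\otimes_{kH}kH\alpha$ as a direct summand of $k\mathcal{C}1_y\otimes_{kH}kH^{\oplus n}\cong(k\mathcal{C}1_y)^{\oplus n}$, which is projective; and a summand of a projective module is projective. Equivalently, choosing an idempotent $e\in kH$ with $kHe\cong kH\alpha$, one obtains $k\mathcal{C}\alpha\cong k\mathcal{C}1_ye=k\mathcal{C}e$, which is visibly projective.

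I expect the kernel computation of the second paragraph to be the main obstacle, and it is exactly the step where freeness is indispensable: the UFP guarantees that the only coincidences among the composites $\eta\alpha$ come from the right $H_0$-action, so that $k\mathcal{C}\alpha$ is genuinely induced from the $kH$-module $kH\alpha$ rather than a proper quotient of an induced module. If $\mathcal{C}$ were not free one would expect additional collapsing (compare Example~2.12), $\rho$ would acquire a kernel not accounted for by $H_0$, and the module could fail to be projective, in line with the forthcoming Theorem~1.2.
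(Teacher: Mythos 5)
Your proposal is correct and follows essentially the same route as the paper: the paper likewise uses the UFP to show that $\delta_1\alpha=\delta_2\alpha$ forces $\delta_2=\delta_1h$ with $h\in H_0=\text{Stab}_H(\alpha)$, and then identifies $k\mathcal{C}\alpha\cong k\mathcal{C}e$ for the averaging idempotent $e=\frac{1}{|H_0|}\sum_{h\in H_0}h$ --- exactly the idempotent your final sentence invokes. Your tensor formulation $k\mathcal{C}\alpha\cong k\mathcal{C}1_y\otimes_{kH}kH\alpha$ is a more structural packaging of the paper's hands-on verification that $r\alpha\mapsto re$ is a well-defined isomorphism, but the mathematical content (UFP for the kernel computation, invertibility of $|H|$ for the splitting) is identical.
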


\begin{proof}
Let $H = \text{End} _{\mathcal{C}} (y)$ and $H_0 = \text{Stab} _{H} (\alpha)$. Since the order of $H$ is invertible in $k$, the subgroup $H_0$ has invertible order in $k$, too. Thus we can define $e = \frac{1}{|H_0|} \sum _{h \in H_0} h \in kH \subseteq k\mathcal{C}$. Since $e$ is an idempotent of $k\mathcal{C}$, it follows that $k\mathcal{C} e$ is a projective $k\mathcal{C}$-module. We will show that $k\mathcal{C} \alpha \cong k\mathcal{C}e$ as $k\mathcal{C}$-modules.\

Define a map $\varphi: k\mathcal{C} \alpha \rightarrow k\mathcal{C} e$ by letting $\varphi (r \alpha) = re$, where $r \in k\mathcal{C}$. We claim that $\varphi$ is a $k\mathcal{C}$-module isomorphism.\

First, we want to show that $\varphi$ is well-defined. That is, if $\sum_{i=1}^m a_i \delta_i \alpha = \sum_{j=1} ^n b_j \beta_j \alpha$ are two different expressions of a vector in $k\mathcal{C} \alpha$, where $a_i, b_j \in k$, $\delta_i, \beta_j \in \text{Mor} (\mathcal{C})$ are morphisms starting at $y$, then $\sum_{i=1}^m a_i \delta_i e = \sum_{j=1}^n b_j \beta_j e$. This is equivalent to saying that if $\sum_{i=1}^m a_i \delta_i \alpha =0$, then $\sum_{i=1}^m a_i \delta_i e=0$, where all $\delta_i$ are pairwise distinct morphisms starting at $y$.\

Those $\delta_i \alpha$ might not be all distinct. By changing the indices if necessary, we can suppose that $\delta_1\alpha = \ldots = \delta_{i_1} \alpha$, $\delta_{i_1+1} \alpha = \ldots = \delta_{i_2} \alpha$, and so on until $\delta_{i_{l-1}+1} \alpha = \ldots = \delta_{i_l} \alpha$, where $\delta_{i_1} \alpha, \delta_{i_2} \alpha, \ldots, \delta_{i_l} \alpha$ are pairwise distinct and $\delta_{i_l} = \delta_m$. From the definition of the category algebra, $\delta_{i_1} \alpha, \ldots, \delta_{i_l}\alpha$ are in fact linearly independent. Thus we have:
\begin{align*}
\sum_{i=1}^m a_i \delta_i\alpha & = (a_1 \delta_1 \alpha + \ldots + a_{i_1}\delta_{i_1} \alpha) + \ldots + (a_{i_{l-1}+1} \delta_{i_{l-1}+1} \alpha + \ldots + a_{i_l} \delta_{i_l} \alpha) \\
& = (a_1 + \ldots + a_{i_1}) \delta_{i_1} \alpha + \ldots + (a_{i_{l-1}+1} + \ldots + a_{i_l}) \delta_{i_l} \alpha
\end{align*}

Notice that $(a_1 + \ldots + a_{i_1}) = \ldots = (a_{i_{l-1}+1} + \ldots + a_{i_l}) = 0$ by the independence of the $\delta_{i_j} \alpha$, $1 \leqslant j \leqslant l$. We want to show that each term $(a_{i_{j-1}+1} + \ldots + a_{i_j}) \delta_{i_j} \alpha$ is sent to 0 by $\varphi$. Observe that since $\delta_s \alpha = \delta_{i_j} \alpha$ for all $1 \leqslant s \leqslant i_j$, by the UFP, there are $h_s \in \text{End} _{\mathcal{C}} (y)$ such that $h_s \alpha= \alpha$ and $\delta_s = \delta_{i_j} h_s^{-1}$. In particular, $h_s \in H_0$. Thus:

\begin{equation*}
\varphi( \sum_{s=1} ^{i_j} a_s \delta_s \alpha) = \sum_{s=1}^{i_j} a_s \delta_s e = \sum_{s=1} ^{i_j} a_s \delta_{i_j} h_s^{-1} e
\end{equation*}

But all $h_s^{-1} \in H_0$ fix $e$, i.e., $h_s^{-1} e=e$, so the right side of the above identity is actually $(\sum_{s=1} ^{i_j} a_s) \delta_{i_j} e = 0$. This shows that $\varphi$ sends each term to 0, and hence is well-defined.\

Now suppose $\varphi( \sum_{i=1}^m a_i \delta_i \alpha ) = \sum_{i=1}^m a_i \delta_i e =0$. Since $e$ fixes $\alpha$, i.e., $e \alpha =\alpha$, we have $\sum_{i=1}^m a_i \delta_i \alpha = (\sum_{i=1}^m a_i \delta_i e) \alpha = 0$. Thus this map is injective.\

It is clear that $\varphi$ is surjective and is a $k\mathcal{C}$-module homomorphism. In conclusion, $\varphi$ is a $k\mathcal{C}$-module isomorphism. The conclusion is proved.
\end{proof}

Let us restate Theorem 1.2 and give a proof here.

\begin{theorem}
Let $\mathcal{C}$ be a finite EI category $\mathcal{C}$. Then $k\mathcal{C}$ is hereditary if and only if $\mathcal{C}$ is a finite free EI category satisfying that the endomorphism groups of all objects have orders invertible in $k$.
\end{theorem}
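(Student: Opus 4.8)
The plan is to prove the two implications separately, establishing the ``if'' direction first since it is needed in the ``only if'' direction. For the ``if'' direction, assume $\mathcal{C}$ is a finite free EI category whose endomorphism groups all have invertible order. By Proposition 4.4, $\rad k\mathcal{C}$ has the non-isomorphisms as a basis, and by Proposition 2.6 every non-isomorphism factors through a first unfactorizable morphism, so $\rad k\mathcal{C} = \sum_{\alpha} k\mathcal{C}\alpha$, the sum taken over all unfactorizable $\alpha$. By Lemma 5.1 the distinct submodules $k\mathcal{C}\alpha$ meet pairwise in zero: their spanning sets $B_{\alpha}=\{\delta\alpha\}$ consist of morphisms, and any single non-isomorphism lying in two distinct submodules would lie in their zero intersection, which is impossible. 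Hence these spanning sets partition the basis of $\rad k\mathcal{C}$, the sum is direct, and since each summand is projective by Lemma 5.2, $\rad k\mathcal{C}$ is projective and $k\mathcal{C}$ is hereditary.

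For the ``only if'' direction, assume $k\mathcal{C}$ is hereditary. I first show every $G_x := \text{Aut}_{\mathcal{C}}(x)$ has invertible order. Since the objects form a poset under the relation ``$\text{Hom}_{\mathcal{C}}(x,y)\neq\emptyset$'' (see the remarks before Definition 2.1), choose a maximal object $x$, so that every morphism out of $x$ is an automorphism. Then for a primitive idempotent $e_x\in kG_x$ one has $k\mathcal{C}e_x = kG_xe_x$, a module concentrated at $x$; the $k\mathcal{C}$-submodules of such a module are exactly its $kG_x$-submodules, and the projective $k\mathcal{C}$-modules concentrated at $x$ are precisely the projective $kG_x$-modules. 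If $kG_x$ were not semisimple, some $kG_xe_x$ would be non-simple, so $\rad(kG_xe_x)$ would be a non-projective $k\mathcal{C}$-submodule of the projective $k\mathcal{C}e_x$, contradicting heredity. Hence $|G_x|$ is invertible in $k$, and running over all objects gives the invertibility condition.

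With invertibility established, I invoke the machinery of Sections 3--4. Let $\hat{\mathcal{C}}$ be the free EI cover of $\mathcal{C}$ (Proposition 2.9): it has the same objects, the same endomorphism groups, and the same unfactorizable morphisms as $\mathcal{C}$, so its endomorphism groups also have invertible order and, by the ``if'' direction already proved, $k\hat{\mathcal{C}}$ is hereditary. By Theorem 4.6, $k\mathcal{C}$ and $k\hat{\mathcal{C}}$ have the same ordinary quiver $Q$, and by Proposition 2.8 the functor $\hat{F}$ induces a surjection $\hat{F}_*:k\hat{\mathcal{C}}\twoheadrightarrow k\mathcal{C}$. For a finite-dimensional hereditary algebra over an algebraically closed field, the dimension is determined by the ordinary quiver together with the dimensions of the simple modules: the Cartan invariants $[P_b:S_a]$ equal the number $p_{ab}$ of paths $a\to b$ in $Q$, whence $\dim_k A = \sum_{a,b} p_{ab}\,(\dim S_a)(\dim S_b)$. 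Since $\mathcal{C}$ and $\hat{\mathcal{C}}$ share the same $Q$ and identical endomorphism groups, hence identical simple-module dimensions at each vertex, we get $\dim_k k\mathcal{C}=\dim_k k\hat{\mathcal{C}}$. A surjection of algebras of equal finite dimension is an isomorphism, so $\hat{F}_*$ is an isomorphism; as it carries the basis $\text{Mor}(\hat{\mathcal{C}})$ bijectively onto $\text{Mor}(\mathcal{C})$, the functor $\hat{F}$ is an isomorphism of categories, and therefore $\mathcal{C}\cong\hat{\mathcal{C}}$ is a finite free EI category.

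The main obstacle is this final step: promoting the a priori proper quotient $\hat{F}_*$ to an isomorphism. Everything rests on the dimension formula for hereditary algebras via the Cartan matrix, which guarantees that two hereditary algebras with the same ordinary quiver and the same simple-module dimensions have the same total dimension. This is exactly what forces $k\hat{\mathcal{C}}\to k\mathcal{C}$ to be an equality, and thereby imposes the unique factorization property on $\mathcal{C}$; the earlier steps (the direct-sum decomposition of the radical and the maximal-object argument) are comparatively routine once Lemmas 5.1 and 5.2 and Proposition 4.4 are in hand.
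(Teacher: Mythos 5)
Your ``if'' direction and the freeness half of your ``only if'' direction are correct and essentially the paper's own argument: the paper likewise decomposes $\rad k\mathcal{C}$ as a direct sum of the modules $k\mathcal{C}\alpha$ via Lemma 5.1 and Lemma 5.2, and likewise promotes the surjection $k\hat{\mathcal{C}}\rightarrow k\mathcal{C}$ to an isomorphism by comparing dimensions of two hereditary algebras with the same ordinary quiver and the same simple modules; your Cartan-matrix formula $\dim_k A=\sum_{a,b}[P_b:S_a]\,(\dim S_a)(\dim S_b)$ is a legitimate fleshing-out of the paper's one-line assertion that these data determine the dimension. The genuine gap is in your invertibility step. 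Your maximal-object argument proves that $|\text{Aut}_{\mathcal{C}}(x)|$ is invertible only when $x$ is \emph{maximal}, and the closing phrase ``running over all objects'' has no content: for a non-maximal object $y$ the whole mechanism collapses, because there need not exist any nonzero projective $k\mathcal{C}$-module concentrated at $y$. Concretely, take $\mathcal{C}$ with two objects, a single arrow $x\rightarrow y$, $\text{Aut}_{\mathcal{C}}(y)=1$, $\text{Aut}_{\mathcal{C}}(x)=C_p$ acting trivially, and $\mathrm{char}\,k=p$: the unique maximal object $y$ has trivial group, so your argument certifies nothing about $x$, which is exactly the object where invertibility fails. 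Repairing your route would require an induction in which one removes a maximal object and knows that the corner algebra $f k\mathcal{C}f$, $f=1-1_x$, of a hereditary algebra is again hereditary --- a nontrivial fact not available in the paper (its Corollary 5.4 on full subcategories is deduced \emph{from} this theorem, so invoking it would be circular). The paper instead uses Xu's Lemma 5.1.1: evaluation at an \emph{arbitrary} object $x$ carries a projective resolution of the simple module $k_x$ to a projective resolution of the trivial $k\text{Aut}_{\mathcal{C}}(x)$-module, so heredity forces $k$ to have projective dimension at most one over $k\text{Aut}_{\mathcal{C}}(x)$, whence the group order is invertible. That argument works uniformly at every object, which is precisely what yours lacks.

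A secondary, fixable lapse in the same step: you assert that a non-simple projective indecomposable $kG_xe_x$ has non-projective radical. For a general algebra this is false (for the path algebra of $A_2$, the projective cover of the simple at the source has radical a simple projective); it is true for group algebras because they are self-injective, so a projective submodule of $kG_xe_x$ would be injective, hence a direct summand of an indecomposable module, forcing it to be $0$. You should either say this explicitly, or conclude from your setup that $kG_x$ is hereditary and quote the standard fact (stated at the start of Section 5 of the paper) that a group algebra is hereditary if and only if the group order is invertible in $k$.
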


\begin{proof}
If $\mathcal{C}$ has only one object, the conclusion holds obviously. Without loss of generality, we suppose that $\mathcal{C}$ has more than one object.\

\textbf{The if part.} By Corollary 5.2 on page 17 of ~\cite{Auslander}, it suffices to prove that $\Lambda = \rad k\mathcal{C}$ is a projective $k\mathcal{C}$-module. By Proposition 4.6, $\Lambda$ has as a basis all non-isomorphisms in $\mathcal{C}$. By Proposition 2.6, $\Lambda$ as a $k\mathcal{C}$-module is the sum of all submodules of the form $k\mathcal{C} \alpha$, where $\alpha$ ranges all unfactorizable morphisms in $\mathcal{C}$. By Lemma 5.1, any two of them either coincide, or have a trivial intersection. Therefore, $\Lambda$ is the direct sum of some of these submodules. But all these submodules are projective by Lemma 5.2, so $\Lambda$ is also projective.\\

\textbf{The only if part.} Let $\mathcal{C}$ be a finite EI category such that $k\mathcal{C}$ is hereditary, we first show that all endomorphism groups of objects have orders invertible in $k$. If this is not true, then $\mathcal{C}$ has an object $x$ whose endomorphism group $G= \text{End} _{\mathcal{C} } (x)$ has order not invertible in $k$. Let $k_x$ be the simple $k\mathcal{C}$-module which is $k$ on $x$, and 0 on other objects. Let $\mathcal{P} \rightarrow k_x$ be a minimal $k\mathcal{C}$-projective resolution of $k_x$. By Lemma 5.1.1 of ~\cite{Xu2}, it induces a projective resolution $\mathcal{P} (x) \rightarrow k$ of $k \text{Aut} _{\mathcal{C}} (x)$-modules. Since Aut$_{\mathcal{C}} (x)$ has order not invertible in $k$, this induced projective resolution has infinite length, so $\mathcal{P} \rightarrow k_x$ must be of infinite length. This is impossible since $k\mathcal{C}$ is hereditary. Consequently, all endomorphism groups of objects in $\mathcal{C}$ have orders invertible in $k$.\

Next we prove that $\mathcal{C}$ is a finite free EI category. By Proposition 2.9, there is a full functor $\hat{F}: \hat{\mathcal{C}} \rightarrow \mathcal{C}$, where $\hat{\mathcal{C}}$ is the free EI cover of $\mathcal{C}$. Moreover, $\hat{F}$ is the identity map restricted to objects, isomorphisms and unfactorizable morphisms. By Lemma 2.12, $\hat{\mathcal{C}}$ gives a surjective algebra homomorphism $\psi: k \hat{\mathcal{C}} \rightarrow k\mathcal{C}$. Since $\hat{\mathcal{C}}$ and $\mathcal{C}$ have the same objects, and the same endomorphism group for each object, we know from Proposition 4.3 of ~\cite{Webb1} that every simple $k\mathcal{C}$-module can be viewed as a simple $k \hat{\mathcal{C}}$-module, giving all simple $k \hat{\mathcal{C}}$-modules. Moreover, $k\mathcal{C}$ and $k \hat{\mathcal{C}}$ have the same ordinary quiver (Theorem 4.7). We know that $k \hat{\mathcal{C}}$ is a hereditary algebra by the conclusion we just proved. Therefore, both $k\mathcal{C}$ and $k\hat {\mathcal{C}}$ are hereditary algebras with the same simple modules and the same ordinary quiver. But for a hereditary algebra, these data completely determine the dimension of this algebra. Consequently, $k\mathcal{C}$ and $k \hat{\mathcal{C}}$ have the same dimension, so $\psi$ is an isomorphism and the functor $\hat{F}: \hat{\mathcal{C}} \rightarrow \mathcal{C}$ must be bijective on morphisms. Therefore, $\hat{F}$ is an isomorphism of categories. In conclusion, $\mathcal{C}$ is isomorphic to $\hat{\mathcal{C}}$, so is a finite free EI category.
\end{proof}

\begin{corollary}
The category algebra $k\mathcal{C}$ of a finite EI category $\mathcal{C}$ is hereditary if and only if for all full subcategories $\mathcal{D}$ of $\mathcal{C}$, the category algebras $k\mathcal{D}$ are hereditary.
\end{corollary}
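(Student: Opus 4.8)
The plan is to reduce the corollary to the structural characterization of hereditary category algebras just established in Theorem 5.3, which states that $k\mathcal{C}$ is hereditary exactly when $\mathcal{C}$ is a finite free EI category all of whose objects have endomorphism groups of order invertible in $k$. The ``if'' direction is essentially free: since $\mathcal{C}$ is itself a full subcategory of $\mathcal{C}$, applying the hypothesis (that $k\mathcal{D}$ is hereditary for every full subcategory $\mathcal{D}$) to the choice $\mathcal{D} = \mathcal{C}$ yields the conclusion at once. Hence the entire content of the statement lies in the ``only if'' direction.

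For the ``only if'' direction, I would first invoke Theorem 5.3 on $\mathcal{C}$ itself: assuming $k\mathcal{C}$ hereditary, $\mathcal{C}$ is a finite free EI category and every $\text{End}_{\mathcal{C}}(x)$ has order invertible in $k$. Now fix an arbitrary full subcategory $\mathcal{D}$. By Proposition 2.10 (the ``a subgroup of a free group is free'' analogue), every full subcategory of a finite free EI category is again a finite free EI category, so $\mathcal{D}$ is a finite free EI category. It then remains only to verify the invertibility hypothesis for $\mathcal{D}$, and here fullness does all the work: for each object $x$ of $\mathcal{D}$ we have $\text{End}_{\mathcal{D}}(x) = \text{End}_{\mathcal{C}}(x)$, whose order is invertible in $k$ by the previous step. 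Applying Theorem 5.3 in the reverse direction to $\mathcal{D}$ then gives that $k\mathcal{D}$ is hereditary, which completes the argument.

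I do not expect a genuine obstacle: the heavy lifting has already been packaged into Theorem 5.3 and Proposition 2.10, and the corollary is a formal consequence of combining them. The only point requiring a moment's care is the transfer of the ``invertible order'' condition to $\mathcal{D}$, and this is immediate from the definition of a full subcategory, since passing to a full subcategory leaves the endomorphism group of each retained object unchanged. One should also bear in mind the standing convention that our finite EI categories are connected and skeletal; a full subcategory need not be connected, but since heredity of $k\mathcal{D} \cong \bigoplus_i k\mathcal{D}_i$ can be tested on each connected component $\mathcal{D}_i$ separately, and each component is itself a full subcategory to which the same reasoning applies, this causes no difficulty.
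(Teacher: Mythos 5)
Your proof is correct and follows essentially the same route as the paper: the trivial direction via $\mathcal{D}=\mathcal{C}$, then Theorem 5.3 applied to $\mathcal{C}$, Proposition 2.10 to pass freeness to full subcategories, the observation that fullness preserves endomorphism groups (hence invertibility of their orders), and Theorem 5.3 again in reverse. Your closing remark about disconnected full subcategories and testing heredity componentwise is a small point of care the paper leaves implicit, but it does not change the argument.
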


\begin{proof}
One side is trivial since $\mathcal{C}$ is a full subcategory of itself. Now assume that $k\mathcal{C}$ is hereditary. By the previous theorem, $\mathcal{C}$ is a finite free EI category whose endomorphism groups of objects have orders invertible in $k$. If $\mathcal{D}$ is a full subcategory of $\mathcal{C}$, then by Proposition 2.10, $\mathcal{D}$ is also a finite free EI category for which all endomorphism groups of objects obviously have orders invertible in $k$. By the above theorem again, $k\mathcal{D}$ is hereditary.
\end{proof}

\begin{corollary}
Let $\mathcal{C}$ be a finite free EI category for which all endomorphism groups of objects have orders invertible in $k$. Then $\mathcal{C}$ is of finite (tame, resp.) representation type if and only if its ordinary quiver has underlying graph a disjoint union of Dynkin (Euclidean, resp.) diagrams. Otherwise, it has wild representation type.
\end{corollary}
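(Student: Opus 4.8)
The plan is to reduce the claim to the classical classification of the representation type of hereditary path algebras. First I would apply Theorem 5.3: since $\mathcal{C}$ is a finite free EI category all of whose endomorphism groups have order invertible in $k$, the category algebra $k\mathcal{C}$ is hereditary. By Theorem 4.7 the ordinary quiver of $k\mathcal{C}$ is the quiver $Q$ produced by the algorithm of Section 4, and by Proposition 4.5 this $Q$ is acyclic.

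The second step is to identify $k\mathcal{C}$ up to Morita equivalence with a genuine path algebra. Over the algebraically closed field $k$, the basic algebra Morita-equivalent to $k\mathcal{C}$ is, by Gabriel's theorem, of the form $kQ/I$ for an admissible ideal $I$, where $Q$ is the ordinary quiver. Because $k\mathcal{C}$ is hereditary and $Q$ is acyclic (so that $kQ$ is already finite dimensional), the relation ideal must vanish, $I=0$; hence the basic algebra is exactly the path algebra $kQ$. As representation type is a Morita invariant, $k\mathcal{C}$ and $kQ$ have the same representation type.

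It then remains to invoke the classification of the representation type of path algebras of finite acyclic quivers over an algebraically closed field. By Gabriel's theorem, $kQ$ is of finite representation type precisely when the underlying graph of $Q$ is a disjoint union of Dynkin diagrams; by the theorems of Donovan--Freislich and Nazarova, $kQ$ is of tame type precisely when that underlying graph is a disjoint union of Euclidean diagrams; and otherwise $kQ$ is wild. Transporting this description back through the Morita equivalence and through the identification of the ordinary quiver supplied by Theorem 4.7 yields exactly the trichotomy asserted by the corollary.

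I do not expect a deep obstacle here, since the substance lies entirely in correctly combining the previously established Theorem 4.7 and Theorem 5.3 with the standard hereditary-algebra dichotomy. The one place demanding care is the reduction to the basic algebra and the verification that heredity forces the relation ideal $I$ to be zero, so that the classification stated for honest path algebras $kQ$ may legitimately be applied to $k\mathcal{C}$; once this is in place the remaining argument is purely a citation of Gabriel's theorem and its tame-hereditary extensions.
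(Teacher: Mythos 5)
Your proposal is correct and follows essentially the same route as the paper, whose proof of this corollary is a single sentence citing Theorem 5.3 together with the classification of representation types of quivers. You have simply made explicit the standard intermediate step the paper leaves implicit — that a hereditary finite-dimensional algebra over an algebraically closed field is Morita equivalent to the path algebra of its (acyclic) ordinary quiver, so the Gabriel/Donovan--Freislich--Nazarova trichotomy for $kQ$ transfers to $k\mathcal{C}$.
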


\begin{proof}
The conclusion comes from the previous theorem and the classification of representation types of quivers.
\end{proof}

As an application of this theorem, we assert that the categories shown in Example 4.3 and Example 4.4 have finite representation type over fields $k$ whose characteristic is not 2 or 3.

\section{Application to Representation Types}

To determine the representation type of a finite EI category $\mathcal{C}$ is an interesting but challenging problem. In this section we only consider the finite EI categories $\mathcal{C}$ for which the endomorphism groups of all objects have orders invertible in $k$. Under this hypothesis, we can construct the ordinary quiver $Q$ of $k\mathcal{C}$ according to the algorithm described in section 4. If furthermore $\mathcal{C}$ is a finite free EI category, its representation type is completely determined by Corollary 5.5. Otherwise, $k\mathcal{C}$ is Morita equivalent to $kQ/I$, where $I$ is an nontrivial admissible ideal of $kQ$. Thus $k\mathcal{C}$ is of finite representation type if so is $kQ$, which is precisely Proposition 2.13 since $k\mathcal{C}$ is Morita equivalent to $kQ$.\

It is well known that if a quiver $Q$ has a subquiver of infinite representation type, $Q$ is of infinite representation type as well. This conclusion holds for finite groups. Finite EI categories have a similar property:

\begin{proposition}
A finite EI category $\mathcal{C}$ is of infinite representation type if it has a full subcategory $\mathcal{D}$ of infinite representation type.
\end{proposition}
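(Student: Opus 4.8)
The plan is to exploit the fact that a full subcategory corresponds to a ``corner'' algebra of the category algebra. Let $S = \mathrm{Ob}(\mathcal{D}) \subseteq \mathrm{Ob}(\mathcal{C})$ and set $e = \sum_{x \in S} 1_x \in k\mathcal{C}$. Since the identity morphisms of distinct objects are orthogonal idempotents in $k\mathcal{C}$, the element $e$ is an idempotent, and for a morphism $\alpha \colon u \to v$ one checks that $e * \alpha * e = \alpha$ precisely when both $u$ and $v$ lie in $S$. As $\mathcal{D}$ is full, $\mathrm{Hom}_{\mathcal{D}}(u,v) = \mathrm{Hom}_{\mathcal{C}}(u,v)$, so these are exactly the morphisms of $\mathcal{D}$, and hence $e k\mathcal{C} e = k\mathcal{D}$ as algebras. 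The proposition therefore reduces to the purely algebraic statement that if $A = k\mathcal{C}$ has finite representation type then so does $eAe = k\mathcal{D}$, for any idempotent $e$.

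First I would set up the two functors attached to $e$. Restriction $\mathrm{res} = e(-) \colon A\text{-mod} \to eAe\text{-mod}$, $M \mapsto eM$, is exact and additive; in representation-theoretic terms it simply discards the values of a representation of $\mathcal{C}$ at the objects outside $S$. Induction $\mathrm{ind} = Ae \otimes_{eAe} (-) \colon eAe\text{-mod} \to A\text{-mod}$ is its left adjoint. The only computation I need is that restriction undoes induction: for any $eAe$-module $N$,
\begin{equation*}
e \cdot (Ae \otimes_{eAe} N) = eAe \otimes_{eAe} N \cong N,
\end{equation*}
so $\mathrm{res} \circ \mathrm{ind} \cong \mathrm{id}_{eAe\text{-mod}}$. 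This identity requires no flatness hypothesis, being just the unit isomorphism for the ring $eAe$ acting on its own module.

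Then I would argue by contraposition. Assume $\mathcal{C}$ has finite representation type and let $L_1, \ldots, L_t$ be a complete list of the non-isomorphic indecomposable $k\mathcal{C}$-modules. Given any indecomposable $k\mathcal{D}$-module $N$, the module $\mathrm{ind}(N)$ decomposes into copies of the $L_j$, whence $N \cong \mathrm{res}\,\mathrm{ind}(N) \cong \bigoplus_j (e L_j)^{m_j}$ by additivity of restriction. Since $N$ is indecomposable, the Krull--Schmidt theorem forces $N$ to be isomorphic to an indecomposable summand of some $eL_j$. As there are only finitely many modules $eL_1, \ldots, eL_t$, each with finitely many indecomposable summands, there are only finitely many isomorphism classes of indecomposable $k\mathcal{D}$-modules, i.e. $\mathcal{D}$ has finite representation type; this is the contrapositive of the claim.

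The main obstacle is that induction does not in general preserve indecomposability, so one cannot simply transport an infinite family of indecomposable $k\mathcal{D}$-modules forward to $k\mathcal{C}$ and hope they remain indecomposable and pairwise non-isomorphic. The device that circumvents this is precisely the identity $\mathrm{res} \circ \mathrm{ind} \cong \mathrm{id}$ together with the additivity of $\mathrm{res}$: rather than controlling the indecomposable summands of $\mathrm{ind}(N)$ directly, I recover $N$ itself from the restrictions $eL_j$, which bounds the number of possible $N$ by the finitely many indecomposable summands of the $eL_j$.
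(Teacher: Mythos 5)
Your proof is correct, and it reaches the paper's conclusion by a genuinely more self-contained route. The paper also argues by contraposition through an induction--restriction pair, but it leans on two facts imported from Xu's thesis: that induction $\uparrow_{\mathcal{D}}^{\mathcal{C}}$ sends indecomposable $k\mathcal{D}$-modules to indecomposable $k\mathcal{C}$-modules (Theorem 4.4.2 there), and that $M$ is $\mathcal{D}$-projective, i.e. $M \mid M\uparrow_{\mathcal{D}}^{\mathcal{C}}\downarrow_{\mathcal{D}}^{\mathcal{C}}$; together these show every indecomposable $k\mathcal{D}$-module is a summand of the restriction of an indecomposable $k\mathcal{C}$-module. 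You sidestep the first (nontrivial) citation entirely: by identifying $k\mathcal{D}$ with the corner algebra $e\,k\mathcal{C}\,e$ --- which is exactly where fullness of $\mathcal{D}$ enters --- you need only the elementary unit isomorphism $\mathrm{res}\circ\mathrm{ind}\cong\mathrm{id}$ (a sharper form of $\mathcal{D}$-projectivity: an isomorphism rather than a mere summand) and then Krull--Schmidt to place an indecomposable $N$ among the indecomposable summands of the finitely many modules $eL_j$, without ever needing to know whether $\mathrm{ind}(N)$ is indecomposable. What the paper's route buys is brevity given the cited machinery, plus the extra information that induction preserves indecomposability, which is useful elsewhere; what yours buys is that the proposition is exposed as a completely general ring-theoretic fact --- for any idempotent $e$ in a finite-dimensional algebra $A$, finite representation type of $A$ passes to $eAe$ --- with no input special to category algebras beyond the identification $k\mathcal{D}=e\,k\mathcal{C}\,e$.
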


\begin{proof} Let $M$ be an indecomposable $k\mathcal{D}$-module.  By Theorem 4.4.2 of ~\cite{Xu2}, the induced module $M\uparrow_{\mathcal{D}}^{\mathcal{C}}$ is an indecomposable $k\mathcal{C}$-module. Moreover, $M$ is $\mathcal{D}$-projective, i.e., $M \mid M\uparrow_{\mathcal{D}}^{\mathcal{C}} \downarrow_{\mathcal{D}}^{\mathcal{C}}$. This implies that every indecomposable $k\mathcal{D}$-module $M$ is a direct summand of $\tilde{M} \downarrow_{\mathcal{D}}^{\mathcal{C}}$, where $\tilde{M}$ is an indecomposable $k\mathcal{C}$-module. If $\mathcal{C}$ is of finite representation type, it has only finitely many non-isomorphic indecomposable modules. But the restriction of these modules to $\mathcal{D}$ can produce only finitely many non-isomorphic indecomposable $k\mathcal{D}$-modules, and hence $\mathcal{D}$ is of finite representation type, which is a contradiction!
\end{proof}

Now let us consider full subcategories of $\mathcal{C}$. The subcategories with one object always have finite representation type since their category algebras are precisely semisimple group algebras. They cannot provide us much information about the representation type of $\mathcal{C}$. Without loss of generality we suppose that $\mathcal{C}$ has more than one object. Consider the connected subcategories with two objects. Since they are always finite free EI categories, we can completely determine their representation types by constructing the ordinary quivers. Fortunately, these categories give us many useful details about the representation type of $\mathcal{C}$.\

Let $\mathcal{D}$ be a connected full subcategory formed by two distinct objects $x$ and $y$. From the previous proposition we know that if $\mathcal{D}$ is of infinite representation type, so is $\mathcal{C}$. Let $G = \text{Aut}_{\mathcal{D}} (x)$, $H = \text{Aut}_{\mathcal{D}} (y)$. \

\begin{corollary}
With the above notation, if Hom$_{\mathcal{D}} (x,y)$ has more than one orbit as an $(H, G)$-biset, then $\mathcal{D}$, hence $\mathcal{C}$ are of infinite representation type.
\end{corollary}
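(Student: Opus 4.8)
The plan is to reduce the statement entirely to the classification of representation types through the ordinary quiver. Since $\mathcal{D}$ is a connected two-object full subcategory of $\mathcal{C}$, it is a finite free EI category, and because its two objects are objects of $\mathcal{C}$ their automorphism groups $G$ and $H$ have orders invertible in $k$ by the standing hypothesis of this section. Hence Corollary 5.5 applies: $\mathcal{D}$ has finite representation type if and only if the underlying graph of its ordinary quiver $Q$ is a disjoint union of Dynkin diagrams. So it suffices to exhibit a multiple edge in that graph, since no Dynkin diagram contains one; this forces $\mathcal{D}$ to have infinite representation type, and Proposition 6.1 then transfers infinite representation type from the full subcategory $\mathcal{D}$ to $\mathcal{C}$.

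First I would pin down the representative unfactorizable morphisms from $x$ to $y$. Because $\mathcal{D}$ has only the two objects $x \neq y$ and is skeletal and connected, any factorization $x \to z \to y$ of a morphism has $z \in \{x,y\}$, so one of the two factors is an endomorphism and hence an automorphism. Thus \emph{every} morphism in $\text{Hom}_{\mathcal{D}}(x,y)$ is unfactorizable, and the representative unfactorizable morphisms from $x$ to $y$ are precisely a set of orbit representatives of $\text{Hom}_{\mathcal{D}}(x,y)$ under the $(H,G)$-action. By hypothesis there are at least two such orbits, so there are at least two representative unfactorizable morphisms $\alpha_1, \alpha_2 : x \to y$.

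Next I would read off the relevant arrows of $Q$. Writing $k_x$ and $k_y$ for the trivial $kG$- and $kH$-modules, Proposition 4.5 (equivalently, Step 3 of the algorithm applied with $V = k_x$ and $W = k_y$) shows that each $\alpha_i$ contributes at least one arrow from $k_x$ to $k_y$: the trivial module is a summand of the permutation module $k\uparrow_{G_0}^{G_1}$, and under the identification $G_1/G_0 \cong H_1/H_0$ of Lemma 3.3 it matches the trivial summand of $k\uparrow_{H_0}^{H_1}$, giving $e_1 = f_1 = 1$ for the common summand $U = k$. Since the contributions of $\alpha_1$ and $\alpha_2$ are added together in Step 4, $Q$ has at least two arrows from $k_x$ to $k_y$, so its underlying graph carries a multiple edge between these two vertices and is therefore not a disjoint union of Dynkin diagrams. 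Corollary 5.5 then gives that $\mathcal{D}$ is of infinite representation type, and Proposition 6.1 gives the same for $\mathcal{C}$.

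The only point requiring genuine care, as opposed to bookkeeping, is the claim that each orbit contributes a nonzero arrow between the trivial vertices. This rests on the trivial $k(G_1/G_0)$-module splitting off the permutation module $k\uparrow_{G_0}^{G_1}$, which holds because $[G_1:G_0]$ divides $|G|$ and is therefore invertible in $k$; the matching summand on the $H$-side is then forced by the identification $G_1/G_0 \cong H_1/H_0$. Everything else is the classification of finite, tame and wild underlying graphs, already packaged into Corollary 5.5.
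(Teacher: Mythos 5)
Your proof is correct and follows essentially the same route as the paper's: since every morphism in $\mathrm{Hom}_{\mathcal{D}}(x,y)$ is unfactorizable, two biset orbits yield two representative unfactorizable morphisms, each contributing an arrow from $k_x$ to $k_y$ in the ordinary quiver, whence a multiple edge, infinite representation type by Corollary 5.5, and transfer to $\mathcal{C}$ by Proposition 6.1. The only difference is that you spell out---via the trivial summand of $k\uparrow_{G_0}^{G_1}$ matching the trivial summand of $k\uparrow_{H_0}^{H_1}$ under the identification of Lemma 3.3, giving $e_1 = f_1 = 1$---why each orbit produces an arrow between the trivial vertices, a detail the paper compresses into ``by our algorithm.''
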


\begin{proof}
Notice that every morphism in Hom$_{\mathcal{D}} (x,y)$ is unfactorizable. If it has more than one orbit, then $\mathcal{D}$ has at least two representative unfactorizable morphisms which are not in the same biset orbit. Take two representative unfactorizable morphisms $\alpha \neq \beta$ lying in different biset orbits and consider the ordinary quiver $Q$ of $\mathcal{D}$. It contains two vertices $k_x$ and $k_y$, the trivial representations of $G$ and $H$ respectively. Since both $\alpha$ and $\beta$ give an arrow from $k_x$ to $k_y$ by our algorithm, there are at least two arrows from $k_x$ to $k_y$. Thus $Q$ contains multiple arrows and is of infinite representation type. By Corollary 5.5, $\mathcal{D}$, hence $\mathcal{C}$ are of infinite representation type.
\end{proof}

\begin{remark}
We comment that Proposition 6.1 and Corollary 6.2 are always true over all algebraically closed fields $k$, no matter what the characteristic of $k$ is. We strengthen Corollary 6.2 under the hypothesis that all endomorphism groups have orders invertible in $k$.
\end{remark}

By this corollary, we suppose that Hom$_{\mathcal{D}} (x,y)$ has only one orbit as an $(H, G)$-biset.

\begin{corollary}
With the above notation, if neither $G$ nor $H$ acts transitively on Hom$_{\mathcal{D}} (x,y)$, then $\mathcal{D}$, hence $\mathcal{C}$ are of infinite representation type.
\end{corollary}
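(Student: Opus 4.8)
The plan is to use the machinery of the ordinary quiver established in Section 4, combined with Corollary 5.5, to show that under the stated hypotheses the quiver $Q$ of $\mathcal{D}$ has an underlying graph that is neither Dynkin nor Euclidean. Since $\mathrm{Hom}_{\mathcal{D}}(x,y)$ is a single orbit, and neither $G$ nor $H$ acts transitively on it, the criterion just before Lemma 3.3 tells us precisely that $G_1 \neq G$ and $H_1 \neq H$, where $G_1 = \mathrm{Stab}_G(H\alpha)$ and $H_1 = \mathrm{Stab}_H(\alpha G)$ for a chosen representative $\alpha$. The whole proof reduces to counting the arrows of $Q$ and checking that the underlying graph contains a subgraph that forces wild (or at least infinite) representation type.

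\textbf{Key steps.} First I would record that the vertex set of $Q$ consists of the simple $kG$-modules together with the simple $kH$-modules, and that $\mathcal{D}$ has a single representative unfactorizable morphism $\alpha$ (up to the biset action), so every arrow of $Q$ arises from $\alpha$. By Step 3 of the algorithm, the number of arrows from a simple $kG$-module $V$ to a simple $kH$-module $W$ equals $\sum_{i=1}^r e_i f_i$, where $e_i = \dim_k \mathrm{Hom}_{G_1}(U_i, V\!\downarrow_{G_1}^G)$ and $f_i = \dim_k \mathrm{Hom}_{H_1}(U_i, W\!\downarrow_{H_1}^H)$, summed over the common simple summands $U_i$ of $k\!\uparrow_{G_0}^{G_1}$. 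The second step is to exhibit enough arrows. Since $G_1$ is a proper subgroup of $G$, the trivial $kG$-module $k$ restricts to contain the trivial $kG_1$-module, but there is at least one further simple $kG$-module whose restriction also contains $U=k$ as a summand (because $\dim_k k\!\uparrow_{G_1}^G = [G:G_1] \geq 2$, so by Frobenius reciprocity the trivial $kG_1$-module appears in the restriction of more than one simple $kG$-module, or with multiplicity greater than one in some single module). The same holds on the $H$ side because $H_1 \neq H$. The third step is then to combine these multiplicities: I would show that the bipartite subgraph of $Q$ spanned by those $V$ with $e_i>0$ and those $W$ with $f_i>0$ for the trivial summand $U=k$ has an underlying graph containing either a multiple edge, or a cycle, or a star with too many edges — any of which is outside the Dynkin/Euclidean list. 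A clean route is to count total arrows: the number of arrows attached to the summand $U_i$ is $(\sum_V e_i)(\sum_W f_i)$, and $\sum_V e_i = \dim_k \mathrm{Hom}_{G_1}(U_i, kG\!\downarrow_{G_1}^G)$ which is strictly larger than it would be in the transitive case, forcing the bipartite incidence structure to be large enough to contain a Euclidean-or-worse subgraph.

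\textbf{Main obstacle.} The delicate point is ruling out the borderline case where the subgraph is exactly a Dynkin or Euclidean diagram. Having $G_1\subsetneq G$ and $H_1\subsetneq H$ guarantees extra vertices and extra incidences, but I must verify these do not conspire to produce, say, a type $A$ or $D$ or $\tilde{A}$ graph. The cleanest way to dispose of this is to pin down a concrete forbidden configuration: because $k\!\uparrow_{G_0}^{G_1}$ shares the trivial summand with both $V\!\downarrow_{G_1}^G$ for at least two distinct $V$ and with $W\!\downarrow_{H_1}^H$ for at least two distinct $W$, the vertex $k_y$ (trivial $H$-module) receives arrows from at least two distinct source vertices, and symmetrically $k_x$ emits arrows to at least two distinct targets. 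Tracing these incidences produces a subgraph on four-or-more vertices in which two source vertices both connect to a common target and two target vertices both receive from a common source; such a configuration contains an underlying graph that is neither Dynkin nor Euclidean (it contains a $\tilde{D}_4$ or a longer branched tree, or a cycle). Hence by Corollary 5.5, $\mathcal{D}$ is of infinite representation type, and by Proposition 6.1, so is $\mathcal{C}$.
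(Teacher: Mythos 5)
Your strategy is the same as the paper's: use the properness of $G_1$ in $G$ and of $H_1$ in $H$ together with Frobenius reciprocity to produce a nontrivial simple $kG$-module $S$ with $k \mid S\downarrow_{G_1}^G$ and a nontrivial simple $kH$-module $T$ with $k \mid T\downarrow_{H_1}^H$, then read off arrows of $Q$ from Step 3 of the algorithm and conclude via Corollary 5.5 and Proposition 6.1. However, your final assembly has a genuine gap. The configuration you actually establish --- ``$k_y$ receives arrows from two distinct sources and $k_x$ emits arrows to two distinct targets'' --- consists of only three arrows, namely $k_x \to k_y$, $S \to k_y$ and $k_x \to T$, and the underlying graph of these three arrows is the path $S - k_y - k_x - T$, which is a Dynkin diagram of type $A_4$. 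From this configuration alone nothing follows, and your assertion that it ``contains a $\tilde{D}_4$ or a longer branched tree, or a cycle'' is not justified. What closes the cycle is a fourth arrow $S \to T$: since the trivial $k G_1$-module is a summand of both $S\downarrow_{G_1}^G$ and (under the identification $G_1/G_0 \cong H_1/H_0$) of $T\downarrow_{H_1}^H$, Step 3 of the algorithm places at least one arrow from $S$ to $T$. The paper records this arrow explicitly, obtaining the square with arrows $k_x \to k_y$, $k_x \to T$, $S \to k_y$, $S \to T$, whose underlying graph is a cycle and hence not Dynkin. Your own setup (the full bipartite subquiver on the vertices with $e_i > 0$ and $f_i > 0$) does contain this arrow, but you never invoke it, and without it the argument as written does not go through.

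A second, more minor point: you repeatedly claim the resulting configurations are ``neither Dynkin nor Euclidean,'' which is false --- the square above is a cycle of length four, i.e.\ exactly Euclidean of type $\tilde{A}_3$, a double edge is $\tilde{A}_1$, and $\tilde{D}_4$ is itself Euclidean. This does not damage the conclusion, because Corollary 5.5 only requires the underlying graph of $Q$ to fail to be a disjoint union of Dynkin diagrams, and a cycle already guarantees that; but the stronger classification claim should be dropped. Finally, your hedge ``or with multiplicity greater than one in some single module'' is unnecessary: $\mathrm{Hom}_{kG}(k, k\uparrow_{G_1}^G)$ is one-dimensional, so the trivial module occurs exactly once in $k\uparrow_{G_1}^G$, and a second, nontrivial simple summand $S$ therefore always exists once $[G:G_1] \geqslant 2$; this is precisely how the paper produces $S$ and $T$.
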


\begin{proof}
Since Hom$_{\mathcal{D}} (x,y)$ has only one orbit as an $(H, G)$-biset, it is generated by one morphism, say $\alpha$.  As before, we define $G_0 = \text{Stab}_G(\alpha)$, $H_0 = \text{Stab}_H(\alpha)$, $G_1 = \text{Stab}_{G} (H\alpha)$, $H_1 = \text{Stab}_{H} (\alpha G)$. Because neither $G$ nor $H$ acts transitively on Hom$_{\mathcal{D}} (x,y)$, $G_1$ is a proper subgroup of $G$, and $H_1$ is a proper subgroup of $H$.\

Let us consider the $kG$-module $k\uparrow _{G_1}^G$. We claim it has exactly one isomorphic copy of the trivial $kG$-module $k$ as a direct summand. Indeed, Hom$_{kG} (k, k\uparrow_{G_1}^G) \cong \text{Hom}_{kG_1} (k \downarrow_{G_1}^ G, k) = \text{Hom}_{kG_1} (k, k) \cong k$, which implies the claim. But the dimension of $k \uparrow_{G_1}^{G}$ is strictly bigger than 1 since $G_1$ is a proper subgroup of $G$, so it has another simple summand $S$ not isomorphic to $k$. Again Hom$_{kG_1} (k, S\downarrow_{G_1}^G) \cong \text{Hom}_{kG} (k \uparrow_{G_1}^ G, S) \neq 0$, and therefore $S\downarrow_{G_1}^G$ has a trivial summand $k$. With the same reasoning, we can find a simple summand $T \mid k\uparrow_{H_1}^H$ such that $T$ is not isomorphic to the trivial representation $k$ of $H$, but $T\downarrow_{H_1}^H$ has a trivial summand $k$.\

Consider the ordinary quiver $Q$ of $\mathcal{D}$. It contains at least four vertices: $k_x$ and $k_y$ which are the trivial modules of $kG$ and $kH$ respectively, $S$, and $T$. By our construction, the representative unfactorizable morphism $\alpha$ induces an arrow $k_x \rightarrow k_y$ and at least one arrow $k_x \rightarrow T$ since $k_x \downarrow_{G_1}^G \cong k \mid T\downarrow_{H_1}^H$ under the identification. It also gives at least one arrow from $S$ to $k_y$ and at least one arrow from $S$ to $T$. Thus we get a subquiver of $Q$, pictured below:

\begin{equation*}
\xymatrix{ k_x \ar[r] \ar[dr] & k_y \\
S \ar[r] \ar[ur] & T}
\end{equation*}

It is clear that the above subquiver is of infinite representation type, so $Q$ is of infinite representation type as well. By Corollary 5.5, $\mathcal{D}$, hence $\mathcal{C}$ are of infinite representation type.\
\end{proof}

Now without loss of generality we suppose that $H$ acts transitively on Hom$_{\mathcal{D}} (x, y)$ (otherwise consider the opposite category $\mathcal{C}^{op}$, which has the same representation type as $\mathcal{C}$), so that $G_0$ is a normal subgroup of $G=G_1$.

\begin{corollary}
Let $S$ be a simple summand of $k \uparrow_{H_0}^{H_1}$. If $S \uparrow_{H_1}^{H}$ is not multiplicity free, or has more than 3 summands, then $\mathcal{D}$, hence $\mathcal{C}$ are of infinite representation type.
\end{corollary}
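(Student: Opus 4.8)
The plan is to realize $S$ as a vertex of the ordinary quiver $Q$ of $\mathcal{D}$ and to read off its outgoing arrows from the module $S\uparrow_{H_1}^H$; the hypotheses will then force $Q$ to contain either a multiple edge or a copy of the Euclidean diagram $\tilde{D}_4$, and Corollary 5.5 finishes the argument. I would begin by recording the consequences of the standing assumption that $H$ acts transitively on $\text{Hom}_{\mathcal{D}}(x,y)$: this gives $G_1=G$, and Lemma 3.3 then forces $G_0\lhd G$, so that $k\uparrow_{G_0}^{G_1}=k(G/G_0)$. Under the identification $G_1/G_0\cong H_1/H_0$ of Remark 3.4, the simple summand $S$ of $k\uparrow_{H_0}^{H_1}$ is matched with a simple summand of $k\uparrow_{G_0}^{G_1}$, hence with a simple $kG$-module (inflated along $G\to G/G_0$); thus $S$ is genuinely one of the vertices of $Q$ lying over the object $x$.

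The crux is to count, via Step 3 of the algorithm, the arrows from the vertex $S$ to an arbitrary simple $kH$-module $W$. Since $G_1=G$ the restriction $S\downarrow_{G_1}^G$ equals $S$, which coincides with one of the summands $U_i$ of $k\uparrow_{G_0}^{G_1}$; hence in the notation of Step 3 the exponent $e_i$ is $1$ for that index and $0$ otherwise, and the number of arrows from $S$ to $W$ is exactly $f_i$, the multiplicity of $S$ in $W\downarrow_{H_1}^H$. A single application of Frobenius reciprocity in the semisimple setting rewrites this as the multiplicity of $W$ in $S\uparrow_{H_1}^H$. I expect this identification, turning the purely combinatorial arrow count into the decomposition of the induced module $S\uparrow_{H_1}^H$, to be the main point of the proof; everything before it is bookkeeping and everything after it is the representation-type dictionary.

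With the count established, write $S\uparrow_{H_1}^H=\bigoplus_j W_j^{m_j}$ with the $W_j$ distinct. If this is not multiplicity free, then some $m_j\geqslant 2$ yields at least two arrows from $S$ to $W_j$, so the underlying graph of $Q$ has a multiple edge and cannot be a disjoint union of Dynkin diagrams. If instead $S\uparrow_{H_1}^H$ is multiplicity free with more than three summands, there are at least four distinct targets $W_1,\dots,W_4$; since all arrows of $Q$ run from the $x$-side to the $y$-side (the only representative unfactorizable morphism being $\alpha$, and $\text{Hom}_{\mathcal{D}}(y,x)=\emptyset$), the vertices $S,W_1,\dots,W_4$ together with their four connecting arrows form a subquiver whose underlying graph is the Euclidean diagram $\tilde{D}_4$. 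In either case the underlying graph of $Q$ is not a disjoint union of Dynkin diagrams, so by Corollary 5.5 the category $\mathcal{D}$ has infinite representation type, and since $\mathcal{D}$ is a full subcategory of $\mathcal{C}$, Proposition 6.1 yields the same conclusion for $\mathcal{C}$.
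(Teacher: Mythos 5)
Your proposal is correct and follows essentially the same route as the paper's proof: identify $S$ with a vertex $S'$ over $x$ via the identification $k\uparrow_{G_0}^{G}\cong k\uparrow_{H_0}^{H_1}$ (using $G_1=G$), convert multiplicities in $S\uparrow_{H_1}^{H}$ into arrow counts out of $S'$ by Frobenius reciprocity, and conclude via a multiple edge or a $\tilde{D}_4$-shaped star together with Corollary 5.5 and Proposition 6.1. The only differences are cosmetic: you make the arrow count explicit through the $e_i f_i$ bookkeeping of Step 3 and name the Euclidean diagram, where the paper argues the same facts slightly more informally.
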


\begin{proof}
Notice that $k \uparrow_{G_0}^{G}$, after it is identified with $k\uparrow_{H_0} ^H$, has a simple summand isomorphic to $S$. Denote this $kG$-module $S'$ to avoid confusion. Let $T$ be a simple summand of $S \uparrow_{H_1}^H$. Since
\begin{equation*}
\text{Hom} _{kH_1} (S, T\downarrow_{H_1}^H) \cong \text{Hom}_{kH} (S\uparrow_{H_1}^H, T) \neq 0,
\end{equation*}
$S$ is a simple summand of $T\downarrow_{H_1}^H$. Moreover, $S \uparrow_{H_1}^H$ has more than one summand isomorphic to $T$ if and only if $T\downarrow_{H_1}^H$ has more than one summand isomorphic to $S$.\

Consider the ordinary quiver $Q$ of $\mathcal{D}$. Both $S'$ and $T$ are vertices in $Q$ since they are a simple $kG$-module and a simple $kH$-module respectively. If $S \uparrow_{H_1}^H$ has more than one summand isomorphic to $T$, then $T\downarrow_{H_1}^H$ has more than one summand isomorphic to $S$. Thus there are multiple arrows from the vertex $S'$ to the vertex $T$, and $Q$ is of infinite representation type.\

Now suppose $S \uparrow_{H_1}^{H}$ has more than 3 summands. If it is not multiplicity free, we are done. Otherwise, all summands are pairwise non-isomorphic. Thus $S\uparrow_{H_1}^H$ has at least four pairwise non-isomorphic simple summands, say $T_1$, $T_2$, $T_3$ and $T_4$. They are different vertices in $Q$. By our construction, there are arrows from the vertex $S'$ to each of $T_i$, $1 \leqslant i \leqslant 4$. As a result, the underlying graph of $Q$ has a component whose underlying graph is not a disjoint union of Dynkin diagrams. Thus $Q$ is of infinite representation type. The conclusion follows from Corollary 5.5.
\end{proof}

We end this section with a finite EI category of infinite representation type.\

\begin{example} Let $\mathcal{C}$ be a finite EI category with: Ob$(\mathcal{C}) = \{x, y\}$, $\text{Aut} _{\mathcal{C}} (x) =1$, $\text{Aut} _{\mathcal{C}} (y)$ is a copy of the symmetric group $S_3$ on three letters, which acts on Hom$_{\mathcal{C}} (x, y) =S_3$ by multiplication from left, Hom$_{\mathcal{C}} (y,x) = \emptyset$.
\end{example}

It is not hard to construct the ordinary quiver of this category, as shown below and it has infinite representation type.

\begin{equation*}
\xymatrix{ & \circ V_2 & \\
\circ k & \bullet k \ar@<0.5ex>[u] \ar@<-0.5ex>[u] \ar[l] \ar[r] & \circ \epsilon}
\end{equation*}

\section{Appendix}

In section 5 we proved that $k\mathcal{C}$ is hereditary if and only if $\mathcal{C}$ is a finite free EI category for which all endomorphism groups of objects have orders invertible in $k$. In this section we will construct a functor $F: k\mathcal{C} \text{-mod} \rightarrow kQ \text{-mod}$ for such $\mathcal{C}$, where $Q$ is the ordinary quiver of $k\mathcal{C}$ constructed by our algorithm in section 4. This functor is faithful, dense and full, and hence induces a Morita equivalence between $k\mathcal{C}$ and $kQ$. This construction of $F$ actually motivated Theorem 1.2, and gives a proof of its if part.\\

\textbf{Definition of $F$ on objects.} We let $R$ be a representation of $\mathcal{C}$ and show how to define a representation $R'$ of its ordinary quiver $Q$. Take a fixed vertex $V$ in $Q$. By our construction, $V$ is a simple $k\text{Aut} _{\mathcal{C}} (x)$-module for an object $x$ in $\mathcal{C}$. Let the homogeneous space $R(x)(V)$ of $V$ in the $k \text{End} _{\mathcal{C}} (x)$-module $R(x)$ be $V^a$, the direct sum of $a$ copies of $V$. We then define $R'(V) = k^a$. In this way we assign a vector space to each vertex in $Q$. Repeating this process, $R'$ assigns a vector space to each vertex in $Q$.\

Now we want to define a linear map for every arrow $V \rightarrow W$ in $Q$. By Remark 4.2, this arrow is indexed by a list $(\alpha, V, W, U, s, l)$ where $\alpha: x \rightarrow y$ is an representative unfactorizable morphism. This morphism uniquely determines and is uniquely determined by the following data (see the notation before Lemma 4.1 and Remark 4.2): \

$\bullet$ the groups $G_0 \unlhd G_1 \leqslant G$ and $H_0 \unlhd H_1 \leqslant H$;\

$\bullet$ a $kG$-module $R(x) = M$ with $M(V) \cong V^a$, and a $kH$-module $R(y) = N$ with $N(W) \cong W^b$;\

$\bullet$ the subspaces $M(V, U, s) \subseteq M$ and $N(W, U, l) \subseteq N$.\

The linear map $\varphi= R(\alpha) : R(x) \rightarrow R(y)$ induces a linear map $\varphi'$, the composite of the following maps, where all inclusions and projections are defined as before:

\begin{equation*}
\xymatrix{\varphi': M(V, U, s) \ar@{^{(}->}[r] & M \ar[r]^{\varphi} & N \ar@{->>}[r] & N(W, U, l).}
\end{equation*}

Notice that $M(V, U, s) \cong U^a$ since $M(V) \cong V^a$, $N(W, U, l) \cong U^b$ since $N(W) \cong W^b$. By Lemma 3.5 and Remark 3.7, the derived map from a particular isomorphic copy of $U$ in $M(V, U, s)$ into a particular isomorphic copy of $U$ in $N(V, U, l)$ is 0, or a $k(G_1/G_0)$-module isomorphism. Both cases give a scalar multiplication $\lambda$ since $U$ is a simple $k(G_1/G_0)$-module. Thus $\varphi'$ gives the following $b \times a$ block matrix $\tilde{B}$, where $I$ is the dim$U \times \text{dim}U$ identity matrix. This matrix, in turn, gives us a $b \times a$ matrix $B$.
\begin{equation*}
\tilde{B}=
\begin{bmatrix}
\lambda_{11}I & \lambda_{21}I & \ldots & \lambda_{a1}I \\
\lambda_{12}I & \lambda_{22}I & \ldots & \lambda_{a2}I \\
\cdots & \cdots & \ldots & \cdots \\
\lambda_{1b}I & \lambda_{2b}I & \ldots & \lambda_{ab}I
\end{bmatrix}
\qquad \leftrightarrow \qquad B=
\begin{bmatrix}
\lambda_{11} & \lambda_{21} & \ldots & \lambda_{a1} \\
\lambda_{12} & \lambda_{22} & \ldots & \lambda_{a2} \\
\cdots & \cdots & \ldots & \cdots \\
\lambda_{1b} & \lambda_{2b} & \ldots & \lambda_{ab}
\end{bmatrix}
\end{equation*}
The matrix $B$ provides the desired linear map $\theta$ from $R'(x) = k^a$ to $R'(y) = k^b$ with respect to the chosen bases.\

Repeating the above process, we can define a linear map for each arrow in $Q$, hence a representation $R'$ of $Q$. Define $F(R)=R'$.\\

\textbf{Definition of $F$ on morphisms.} Let $\pi = \{ \phi_x: x \in \text{Ob} (\mathcal{C}) \}$ be a homomorphism from a $k\mathcal{C}$-module $R_1$ to another $k\mathcal{C}$-module $R_2$. We define a $kQ$-module homomorphism $\pi'$ from $F(R_1)=R_1'$ to $F(R_2)=R_2'$.\

Take an arbitrary vertex $V$ in $Q$. Suppose $R_1'(V) = k^a$ and $R_2'(V) = k^c$. By our construction, $V$ uniquely determines an object $x$ in $\mathcal{C}$, and $M_1 = R_1(x)$ ($M_2 = R_2(x)$, resp.) satisfies $M_1(V) \cong V^a$ ($M_2(V) \cong V^c$, resp.). Let $G= \text{Aut}_{\mathcal{C}} (x)$. The composite map $\phi_V: \xymatrix{M_1(V) \ar@{^{(}->}[r] & M_1 \ar[r]^{\phi_x} & M_2 \ar@{->>}[r] & M_2(V)}$ is a summand of $\phi_x$ since $M_1$ and $M_2$ are semisimple. This summand, with the following matrix representation $\tilde{C}$ (here $I_1$ is the dim$V \times \text{dim} V$ identity matrix), gives a unique $c \times a$ matrix $C$, which defines a linear map $\phi'_V$ from $R_1'(V) = k^a$ to $R_2'(V) = k^c$ with respect to the chosen bases.

\begin{equation*}
\tilde{C}=
\begin{bmatrix}
\epsilon_{11}I_1 & \epsilon_{21}I_1 & \ldots & \epsilon_{a1}I_1 \\
\epsilon_{12}I_1 & \epsilon_{22}I_1 & \ldots & \epsilon_{a2}I_1 \\
\cdots & \cdots & \ldots & \cdots \\
\epsilon_{1c}I_1 & \epsilon_{2c}I_1 & \ldots & \epsilon_{ac}I_1
\end{bmatrix}
\qquad \leftrightarrow \qquad C=
\begin{bmatrix}
\epsilon_{11} & \epsilon_{21} & \ldots & \epsilon_{a1} \\
\epsilon_{12} & \epsilon_{22} & \ldots & \epsilon_{a2} \\
\cdots & \cdots & \ldots & \cdots \\
\epsilon_{1c} & \epsilon_{2c} & \ldots & \epsilon_{ac}
\end{bmatrix}
\end{equation*}

In this way we get $\pi'= \{\phi'_V: R_1'(V) \rightarrow R_2'(V) \mid V \text{ is a vertex in }Q \}$, a family of linear transformations. Define $F(\pi) = \pi'$. Now we need to verify that it is indeed a $kQ$-module homomorphism.\

\begin{remark}
We constructed $\phi'_V$ from a summand $\phi_V$ of $\phi_x$. Clearly, we can recover $\phi_V$ from $\phi_V'$. In particular, $\phi_V=0 \Leftrightarrow \phi_V'=0$.
\end{remark}

Take an arbitrary arrow $V \rightarrow W$ in $Q$ and suppose that $R_1'(V) = k^a$, $R_1'(W) = k^b$, $R_2'(V) = k^c$, $R_2'(W) = k^d$. Let the linear maps $\theta_1$ and $\theta_2$ assigned to this arrow by $R_1'$ and $R_2'$ have the matrix representations $B_1$ and $B_2$, and $\phi_V', \phi_W' \in F(\pi) = \pi'$ have the matrix representations $C_x$ and $C_y$:

\begin{equation*}
B_1=
\begin{bmatrix}
\lambda_{11} & \lambda_{21} & \ldots & \lambda_{a1} \\
\lambda_{12} & \lambda_{22} & \ldots & \lambda_{a2} \\
\cdots & \cdots & \ldots & \cdots \\
\lambda_{1b} & \lambda_{2b} & \ldots & \lambda_{ab}
\end{bmatrix}
\qquad B_2=
\begin{bmatrix}
\mu_{11} & \mu_{21} & \ldots & \mu_{a1} \\
\mu_{12} & \mu_{22} & \ldots & \mu_{a2} \\
\cdots & \cdots & \ldots & \cdots \\
\mu_{1b} & \mu_{2b} & \ldots & \mu_{ab}
\end{bmatrix}
\end{equation*}
\begin{equation*}
C_x=
\begin{bmatrix}
\epsilon_{11} & \epsilon_{21} & \ldots & \epsilon_{a1} \\
\epsilon_{12} & \epsilon_{22} & \ldots & \epsilon_{a2} \\
\cdots & \cdots & \ldots & \cdots \\
\epsilon_{1c} & \epsilon_{2c} & \ldots & \epsilon_{ac}
\end{bmatrix}
\qquad C_y=
\begin{bmatrix}
\eta_{11} & \eta_{21} & \ldots & \eta_{b1} \\
\eta_{12} & \eta_{22} & \ldots & \eta_{b2} \\
\cdots & \cdots & \ldots & \cdots \\
\eta_{1d} & \eta_{2d} & \ldots & \eta_{bd}
\end{bmatrix}
\end{equation*}

As we mentioned before, this arrow is indexed by a list $(\alpha, V, W, U, s, l)$. Let $x$ and $y$ be the source and target of $\alpha$ respectively. Then $V$ and $W$ are a simple $k\text{Aut}_{\mathcal{C}} (x)$-module and a simple $k\text{Aut}_{\mathcal{C}} (y)$-module respectively. Since $\pi = \{ \phi_x: x \in \text{Ob} (\mathcal{C}) \}$ is a $k\mathcal{C}$-module homomorphism from $R_1$ to $R_2$, the following diagram commutes:

\begin{equation}
\xymatrix{
M_1 \ar[r]^{\varphi_1} \ar[d]^{\phi_x} & N_1 \ar[d]^{\phi_y}\\
M_2 \ar[r]^{\varphi_2} & N_2.}
\end{equation}

Obviously, $\phi_x$ sends $M_1(V)$ into $M_2(V)$, and $\phi_y$ sends $N_1(W)$ into $N_2(W)$. By Lemma 4.1, $\phi_x$ sends $M_1(V, U, s)$ into $M_2(V, U, s)$, and $\phi_y$ sends $N_1(W, U, l)$ into $N_2(W, U, l)$. Consequently, the above commutative diagram induces the following commutative diagram, where all inclusions and projections are defined in the usual sense:
\begin{equation*}
\xymatrix{
M_1(V, U, s) \ar@{^{(}->}[r] \ar[d]^{\phi_{V,U,s}} & M_1(V) \ar@{^{(}->}[r] \ar[d]^{\phi_V} & M_1 \ar[r]^{\varphi} \ar[d]^{\phi_x} & N_1 \ar@{->>}[r] \ar[d]^{\phi_y} & N_1(W) \ar@{->>}[r] \ar[d]^{\phi_W} & N_1(W, U, l) \ar[d]^{\phi_{W,U,l}}\\
M_2(V, U, s) \ar@{^{(}->}[r] & M_2(V) \ar@{^{(}->}[r] & M_2 \ar[r]^{\varphi} & N_2 \ar@{->>}[r] & N_2(W) \ar@{->>}[r] & N_2(W, U, l).}
\end{equation*}
In particular, the following diagram commutes:
\begin{equation}
\xymatrix{
M_1(V, U, s) \ar[r]^{\varphi_1'} \ar[d]^{\phi_{V,U,s}} & N_1(W, U, l) \ar[d]^{\phi_{W,U,l}}\\
M_2(V, U, s) \ar[r]^{\varphi_2'} & N_2(W, U, l).}
\end{equation}

But by our definition of the functor $F$, $\varphi'_1$ and $\varphi_2'$ induce $\theta_1$ and $\theta_2$, the maps assigned to this fixed arrow by $R'_1$ and $R'_2$ respectively. They have the following matrix representation, where $I$ is the dim$U \times \text{dim}U$ identity matrix.
\begin{equation*}
\tilde{B_1}=
\begin{bmatrix}
\lambda_{11}I & \lambda_{21}I & \ldots & \lambda_{a1}I \\
\lambda_{12}I & \lambda_{22}I & \ldots & \lambda_{a2}I \\
\cdots & \cdots & \ldots & \cdots \\
\lambda_{1b}I & \lambda_{2b}I & \ldots & \lambda_{ab}I
\end{bmatrix}
\qquad \tilde{B_2}=
\begin{bmatrix}
\mu_{11}I & \mu_{21}I & \ldots & \mu_{a1}I \\
\mu_{12}I & \mu_{22}I & \ldots & \mu_{a2}I \\
\cdots & \cdots & \ldots & \cdots \\
\mu_{1b}I & \mu_{2b}I & \ldots & \mu_{ab}I
\end{bmatrix}
\end{equation*}

Also from the definition of $F$, $\phi_V$ and $\phi_W$ induces $\phi'_V$ and $\phi_W'$ respectively. They have the following matrix representations, where $I_1$ is the dim$V \times \text{dim}V$ identity matrix, $I_2$ is the dim$W \times \text{dim}W$ identity matrix:
\begin{equation*}
\hat{C_x}=
\begin{bmatrix}
\epsilon_{11}I_1 & \epsilon_{21}I_1 & \ldots & \epsilon_{a1}I_1 \\
\epsilon_{12}I_1 & \epsilon_{22}I_1 & \ldots & \epsilon_{a2}I_1 \\
\cdots & \cdots & \ldots & \cdots \\
\epsilon_{1c}I_1 & \epsilon_{2c}I_1 & \ldots & \epsilon_{ac}I_1
\end{bmatrix}
\qquad \hat{C_y}=
\begin{bmatrix}
\eta_{11}I_2 & \eta_{21}I_2 & \ldots & \eta_{b1}I_2 \\
\eta_{12}I_2 & \eta_{22}I_2 & \ldots & \eta_{b2}I_2 \\
\cdots & \cdots & \ldots & \cdots \\
\eta_{1d}I_2 & \eta_{2d}I_2 & \ldots & \eta_{bd}I_2
\end{bmatrix}
\end{equation*}

It is easy to see from the above commutative diagram that $\phi_{V,U,s}$ and $\phi_{W,U,l}$ have the matrix representations $\tilde{C_x}$ and $\tilde{C_y}$, with $I_1$ and $I_2$ replaced by $I$. Since the diagram (7.2) commutes, it must be true: $\tilde{C_y} \tilde{B_1} = \tilde{B_2} \tilde{C_x}$. But:
\begin{equation}
C_yB_1 = B_2C_x \text{ if and only if } \tilde{C}_y \tilde{B}_1 = \tilde{B}_2 \tilde{C_x},
\end{equation}
that is: $\phi_W' \theta_1 = \theta_2 \phi_V'$, and the following diagram commutes. Since this arrow is arbitrarily chosen, we know $\pi'$ is indeed a $kQ$-module homomorphism.
\begin{equation}
\xymatrix{
R_1'(V) = k^a \ar[r]^{\theta_1} \ar[d]^{\phi'_V} & R_1'(W) = k^b \ar[d]^{\phi'_W}\\
R_2'(V) = k^c \ar[r]^{\theta_2} & R_2'(W) = k^d}
\end{equation}

\begin{remark}
The above proof actually implies that diagram (7.4) commutes if and only if diagram (7.2) commutes.
\end{remark}

Therefore, $F$ maps a $k\mathcal{C}$-module homomorphism to a $kQ$-module homomorphism. $F$ also preserves the homomorphism composition since matrix product preserves composition. Thus $F$ is indeed a functor from $\text{Rep}_k\mathcal{C}$ to $\text{Rep}_kQ$.\\

Let us use Example 4.4 to show how a representation of $\mathcal{C}$ gives a representation of the associated quiver $Q$ and vice versa.\

\begin{example} Let $R$ be a representation of the finite EI category $\mathcal{C}$ shown in Example 4.4 with: $R(x) = \langle v_1 \rangle \oplus \langle v_2 \rangle \oplus \langle v_3 \rangle \cong k \oplus k \oplus S$; $R(y) = \langle w_1 \rangle \oplus \langle w_2 \rangle \oplus \langle w_3, w_4 \rangle \oplus \langle w_5, w_6 \rangle \cong k \oplus \epsilon \oplus V_2 \oplus V_2$. Moreover, since $V_2 \downarrow_{H_1}^H \cong k \oplus S$, we assume that $w_3$ and $w_5$ both generate the trivial submodules on restriction to $H_1$, and $w_4$ and $w_6$ both generates submodules isomorphic to $S$ on restriction to $H_1$.
\end{example}

We already know from Example 4.4 that $\alpha = 1 \in S_3$ can be chosen as the unique representative unfactorizable morphism in $\mathcal{C}$. Let $\varphi = R(\alpha)$. Then $\varphi(v_1)$ and $\varphi(v_2)$ are in the subspace of $R(y)$ generated by $w_1, w_3, w_5$, and $\varphi(v_3)$ lies in the subspace generated by $w_2, w_4, w_6$. Thus $\varphi$ has the following matrix representation:

\begin{equation*}
\begin{bmatrix}
\lambda_{11} & \lambda_{21} & 0 \\
0 & 0 & \lambda_{32} \\
\lambda_{13} & \lambda_{23} & 0\\
0 & 0 & \lambda_{34} \\
\lambda_{15} & \lambda_{25} & 0\\
0 & 0 & \lambda_{36} \\
\end{bmatrix}
\end{equation*}

The induced representation $R'$ of $Q$ is described below, where $M_1 = \begin{bmatrix} \lambda_{11} & \lambda_{21} \end{bmatrix}$, $M_2 = \begin{bmatrix} \lambda_{13} & \lambda_{23} \\ \lambda_{15} & \lambda_{25}\end{bmatrix}$, $M_3 = \begin{bmatrix} \lambda_{34} \\ \lambda_{36} \end{bmatrix}$, and $M_4 =\begin{bmatrix} \lambda_{32} \end{bmatrix}$.

\begin{equation*}
\xymatrix{ & k^2 \ar[dl]^{M_1} \ar[dr]^{M_2} & & k \ar[dl]^{M_3} \ar[dr]^{M_4} \\
k & & k^2 & & k
}
\end{equation*}

The reader can easily recover $R$ from $R'$.

\begin{proposition}
The above functor $F$ we constructed is full, faithful and dense.
\end{proposition}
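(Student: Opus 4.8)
The plan is to establish the three properties separately, in each case reducing a global statement about $k\mathcal{C}$-modules to the local, isotypic data that the construction of $F$ manipulates. Throughout I would use that $R_i(x)$ is a semisimple $k\text{Aut}_{\mathcal{C}}(x)$-module, so that every $k\text{Aut}_{\mathcal{C}}(x)$-module homomorphism splits as a direct sum over isotypic components.

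For faithfulness, suppose $F(\pi)=0$ for a homomorphism $\pi=\{\phi_x\}$. By Remark 7.1 each $\phi_V$ vanishes if and only if the associated $\phi_V'$ does, so $F(\pi)=0$ forces $\phi_V=0$ for every vertex $V$. Since $R_1(x)$ and $R_2(x)$ are semisimple, the homomorphism $\phi_x$ preserves isotypic components and is the direct sum of the maps $\phi_V$ over the simple summands $V$ of $R_1(x)$; hence $\phi_x=0$ for every object $x$ and $\pi=0$. Thus $F$ is faithful.

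For fullness, I would start from a $kQ$-module homomorphism $\sigma=\{\psi_V\}$ from $F(R_1)$ to $F(R_2)$. Reversing the definition of $F$ on morphisms, attach to each $\psi_V$ (the matrix $C$) the map $\phi_V\colon R_1(x)(V)\to R_2(x)(V)$ whose matrix is $C$ with every scalar entry $\epsilon$ replaced by $\epsilon I_1$, and set $\phi_x=\bigoplus_V \phi_V$; this is a $k\text{Aut}_{\mathcal{C}}(x)$-module homomorphism by construction. It remains to verify that $\{\phi_x\}$ is a $k\mathcal{C}$-module homomorphism, and by Proposition 3.2 this reduces to checking $\phi_y R_1(\alpha)=R_2(\alpha)\phi_x$ for each representative unfactorizable morphism $\alpha\colon x\to y$. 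Here is the crux: the hypothesis that $\sigma$ commutes with every arrow of $Q$ says exactly that each square (7.4) commutes, which by Remark 7.2 is equivalent to the commutativity of the restricted squares (7.2). Because $R_i(\alpha)$ decomposes as the direct sum of its pieces $M_i(V,U,s)\to N_i(W,U,l)$ by Lemma 3.5 and Remark 3.7, while $\phi_x$ and $\phi_y$ preserve these subspaces by Lemma 4.1, commutativity on each piece assembles to commutativity of the full square $\phi_y R_1(\alpha)=R_2(\alpha)\phi_x$. Hence $\{\phi_x\}$ is a genuine homomorphism $\pi$, and $F(\pi)=\sigma$ by construction, so $F$ is full.

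For density, let $S$ be an arbitrary representation of $Q$. I would build a preimage objectwise and arrowwise. Set $R(x)=\bigoplus_{V\in S_x} V^{\dim_k S(V)}$, a semisimple $k\text{Aut}_{\mathcal{C}}(x)$-module with $R(x)(V)\cong V^{\dim_k S(V)}$. For each representative unfactorizable $\alpha\colon x\to y$, the arrows $(\alpha,V,W,U,s,l)$ of $Q$ emanating from $S_x$ carry matrices $B$; placing the block $\tilde{B}$ (the matrix $B$ with each scalar $\lambda$ replaced by $\lambda I$) on the corresponding summand $M(V,U,s)\to N(W,U,l)$ defines a linear map $\varphi=R(\alpha)\colon R(x)\to R(y)$ whose nonzero pieces are, by the very indexing of the arrows, $k(G_1/G_0)$-isomorphisms between summands of $k\uparrow_{G_0}^{G_1}$ and $k\uparrow_{H_0}^{H_1}$. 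By Lemma 3.6 and Remark 3.7, each such $\varphi$ is the structure map of a representation of $\mathcal{D}_\alpha$, and since these representations agree on the common modules $R(x)$, Proposition 3.1 glues them into a single representation $R$ of $\mathcal{C}$. Unravelling the definition of $F$ then shows $F(R)\cong S$, so $F$ is dense; together with fullness and faithfulness this gives the asserted Morita equivalence.

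The main obstacle I anticipate is the fullness step: one must ensure that the reconstructed $\phi_x$ not only lands in the correct isotypic components but genuinely intertwines $R_1(\alpha)$ and $R_2(\alpha)$ as maps on the \emph{whole} spaces, not merely on the distinguished blocks $M(V,U,s)$ and $N(W,U,l)$. The reduction rests on the two facts that both the horizontal maps $R_i(\alpha)$ and the vertical maps $\phi_x,\phi_y$ respect the decomposition into these subspaces, so that block-wise commutativity — which Remark 7.2 delivers from the hypothesis on $\sigma$ — propagates to the full square and then to a bona fide natural transformation via Proposition 3.2. The faithfulness and density arguments, by contrast, are essentially bookkeeping with the semisimple decompositions and the gluing afforded by Proposition 3.1.
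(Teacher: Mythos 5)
Your proposal is correct and follows essentially the same route as the paper's proof: density via the inverse block-matrix construction glued by Lemma 3.6, Remark 3.7 and Proposition 3.1; faithfulness via Remark 7.1 and semisimplicity of the $k\mathrm{Aut}_{\mathcal{C}}(x)$-modules; and fullness by reconstructing $\phi_x$ from the vertex maps and reducing, through Proposition 3.2, Remark 7.2 and the block decomposition (the content of Lemma 7.1 and Lemma 4.1), to commutativity of the restricted squares. The only differences are cosmetic (order of the three verifications and wording of the block-assembly step), so no further comparison is needed.
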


We give a lemma which will be used in the proof the this proposition.

\begin{lemma}
Let $V = \bigoplus_ {i=1}^m V_i$, $V' = \bigoplus_ {i=1}^m V'_i$, $W = \bigoplus_{i=1}^n W_i$ and $W' = \bigoplus_{i=1}^n W'_i$ be vector spaces. Let $\theta: V \rightarrow V'$, $\varphi: V \rightarrow W$, $\psi: W \rightarrow W'$ and $\varphi': V' \rightarrow W'$  be linear transformations of the following forms. Then the diagram $D$ commutes if and only if all diagrams $D_{ij}$ commute.\\
\begin{equation*}
\theta =
\begin{bmatrix}
\theta_1 & 0 & \ldots & 0 \\
0 & \theta_2 & \ldots & 0 \\
\vdots & \vdots & \ddots & \vdots \\
0 & 0 & \ldots & \theta_m
\end{bmatrix}
\qquad \psi =
\begin{bmatrix}
\psi_1 & 0 & \ldots & 0 \\
0 & \psi_2 & \ldots & 0 \\
\vdots & \vdots & \ddots & \vdots \\
0 & 0 & \ldots & \psi_n
\end{bmatrix}
\end{equation*}
\begin{equation*}
\varphi =
\begin{bmatrix}
\varphi_{11} & \ldots & \varphi_{m1} \\
\varphi_{12} & \ldots & \varphi_{m2} \\
\vdots & \ddots & \vdots \\
\varphi_{1n} & \ldots & \varphi_{mn}
\end{bmatrix}
\qquad \varphi' =
\begin{bmatrix}
\varphi'_{11} & \ldots & \varphi'_{m1} \\
\varphi'_{12} & \ldots & \varphi'_{m2} \\
\vdots & \ddots & \vdots \\
\varphi'_{1n} & \ldots & \varphi'_{mn}
\end{bmatrix}
\end{equation*}
\begin{equation*}
D = \xymatrix{ V \ar[r]^{\varphi} \ar[d]^{\theta} & W \ar[d]^{\psi}\\
V' \ar[r]^{\varphi'} & W'
}
\qquad
D_{ij} = \xymatrix{ V_i \ar[r]^{\varphi_{ij}} \ar[d]^{\theta_i} & W_j \ar[d]^{\psi_j}\\
V'_i \ar[r]^{\varphi'_{ij}} & W'_j
}
\end{equation*}
\end{lemma}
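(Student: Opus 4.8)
The plan is to recognize that both the global square and each small square encode equalities of composite linear maps, and then to reduce the global equality to a blockwise one using the fact that $\theta$ and $\psi$ are block diagonal. First I would observe that $D$ commutes if and only if $\psi \circ \varphi = \varphi' \circ \theta$ as linear maps $V \to W'$, while each $D_{ij}$ commutes if and only if $\psi_j \circ \varphi_{ij} = \varphi'_{ij} \circ \theta_i$ as maps $V_i \to W'_j$. So the whole statement amounts to comparing two composites block by block.

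Next I would compute the two composites occurring in the global square in block form with respect to the given direct sum decompositions. Since $\psi$ is block diagonal, the block of $\psi \circ \varphi$ sending $V_i$ into $W'_j$ is simply $\psi_j \circ \varphi_{ij}$, all cross terms vanishing because the off-diagonal blocks of $\psi$ are zero; likewise, since $\theta$ is block diagonal, the block of $\varphi' \circ \theta$ sending $V_i$ into $W'_j$ is $\varphi'_{ij} \circ \theta_i$. Thus the two block matrices representing $\psi \circ \varphi$ and $\varphi' \circ \theta$ have as their $(j,i)$-entry precisely $\psi_j \varphi_{ij}$ and $\varphi'_{ij} \theta_i$ respectively.

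Finally I would invoke the elementary fact that two linear maps between finite direct sums of vector spaces coincide if and only if all of their corresponding blocks agree. Applying this, the identity $\psi \circ \varphi = \varphi' \circ \theta$ holds if and only if $\psi_j \varphi_{ij} = \varphi'_{ij} \theta_i$ for every pair $(i,j)$, which is exactly the assertion that each square $D_{ij}$ commutes. This yields the claimed equivalence.

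I expect no genuine obstacle here, since the result is a routine fact about block matrix multiplication. The only point requiring care is the index bookkeeping dictated by the matrix conventions in the statement (namely which index labels rows and which labels columns in $\varphi$ and $\varphi'$), so that the collapse of the cross terms coming from the block-diagonal maps $\theta$ and $\psi$ is matched to the correct small square $D_{ij}$; once the conventions are fixed the verification is immediate.
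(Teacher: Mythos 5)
Your proposal is correct and follows essentially the same route as the paper: both arguments compute the block form of $\psi \circ \varphi$ and $\varphi' \circ \theta$, use the block-diagonal structure of $\psi$ and $\theta$ to identify the $(j,i)$-blocks as $\psi_j \varphi_{ij}$ and $\varphi'_{ij}\theta_i$, and conclude by comparing blockwise. Your attention to the row/column index conventions is exactly the only point of care the paper's calculation also rests on.
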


\begin{proof}
This is just a calculation of the matrix product:
\begin{equation*}
\begin{bmatrix}
\psi_1 & 0 & \ldots & 0 \\
0 & \psi_2 & \ldots & 0 \\
\vdots & \vdots & \ddots & \vdots \\
0 & 0 & \ldots & \psi_n
\end{bmatrix}
\begin{bmatrix}
\varphi_{11} & \ldots & \varphi_{m1} \\
\varphi_{12} & \ldots & \varphi_{m2} \\
\vdots & \ddots & \vdots \\
\varphi_{1n} & \ldots & \varphi_{mn}
\end{bmatrix}
=
\begin{bmatrix}
\varphi'_{11} & \ldots & \varphi'_{m1} \\
\varphi'_{12} & \ldots & \varphi'_{m2} \\
\vdots & \ddots & \vdots \\
\varphi'_{1n} & \ldots & \varphi'_{mn}
\end{bmatrix}
\begin{bmatrix}
\theta_1 & 0 & \ldots & 0 \\
0 & \theta_2 & \ldots & 0 \\
\vdots & \vdots & \ddots & \vdots \\
0 & 0 & \ldots & \theta_m
\end{bmatrix}
\end{equation*}
if and only if $\psi_j \varphi_{ij} = \varphi_{ij}' \theta_i$. That is, $D$ commutes if and only if all $D_{ij}$ commute.
\end{proof}

Now let us prove the previous proposition.

\begin{proof}
\textbf{$F$ is dense (or essentially surjective).} First, we show the algorithm giving a representation of $Q$ from a representation of $\mathcal{C}$ is invertible. That is: given a representation $R'$ of $Q$, we can define a representation $R$ of $\mathcal{C}$ such that $F(R) \cong R'$. By Proposition 3.1, it suffices to define a rule $R$ which assigns a $k \text{Aut}_{\mathcal{C}} (x)$-module to each object $x$, a linear transformation to each unfactorizable morphism, so that $R$ restricted to $\mathcal{D}_{\alpha}$ is a representation of $\mathcal{D}_{\alpha}$ for every representative unfactorizable morphism $\alpha$ (see the definition of $\mathcal{D}_{\alpha}$ in section 3).\

An object $x$ gives a family of vertices $\{V_1, \ldots, V_m\}$ in $Q$. If $R'(V_i) = k^{a_i}$, then we define $R(x) = M = V_1^{a_1} \oplus \ldots \oplus V_m^{a_m}$. This definition gives a $k \text{Aut} _{\mathcal{C}} (x)$ -module for every object $x$ in $\mathcal{C}$. Now we construct a linear map $R(\alpha): M=R(x) \rightarrow N=R(y)$ for each representative unfactorizable morphism $\alpha$. As we mentioned before, $M$ as a vector space is the direct sum of all subspaces of the form $M(V, U, s)$, and $N$ as a vector space is the direct sum of all subspaces of the form $N(W, T, l)$ (see the paragraph before Lemma 4.1). Thus it is enough to define linear maps from $M(V, U, s)$ into $N(W, T, l)$.\

The morphism $\alpha$ determines groups $G_0 \unlhd G_1 \leqslant G$. By Remark 3.7, the derived map $\varphi': M(V,U,s) \rightarrow N(W,T,l)$ by $\varphi$ should be a $k(G_1/G_0)$-module homomorphism if $U \cong T \mid k\uparrow_{G_0}^{G_1}$ under the given identification $G_1/G_0 \cong H_1/H_0$; and 0 otherwise. If it is a $k(G_1/G_0)$-module homomorphism, then $U \cong T$ and the list $(\alpha, V, W, U, s, l)$ uniquely determines an arrow from the vertex $V$ to the vertex $W$. Let $B = (\lambda_{ji})_{i=1,\ldots, b}^{j=1, \ldots, a}$ be the matrix representation of the linear map assigned to this arrow by $R'$. It provides a unique block matrix $\tilde{B} = (\lambda_{ji}I)_{i=1,\ldots, b}^{j=1, \ldots, a}$, where $I$ is the dim$U \times \text{dim} U$ identity matrix. This block matrix $\tilde{B}$ gives a linear map from $M(V, U, s)$ to $N(W, T, l)$ under the chosen decomposition of $V \downarrow _{G_1}^G$. All these maps determine a linear map from $M$ to $N$, which can be defined as $R(\alpha)$. Repeating this process, we can define a linear map for each representative unfactorizable morphism.\

By Lemma 3.6, the rule $R$ we just defined is a representation of $\mathcal{D}_{\alpha}$ while restricted to each $\mathcal{D} _{\alpha}$. By Proposition 3.1, $R$ gives a representation of $\mathcal{C}$. By abuse of notation, we denote it by $R$. It is direct to verify that $F(R)$ is isomorphic to $R'$. Thus $F$ is dense.\\

\textbf{$F$ is faithful.} Let $R_1$ and $R_2$ be two $k\mathcal{C}$-modules and $\pi = \{\phi_x \mid x \in \text{Ob} (\mathcal{C}) \}$ be a $k\mathcal{C}$-module homomorphism from $R_1$ to $R_2$. Let $V$ be a vertex in $Q$ and $x$ be the object in $\mathcal{C}$ corresponding to $V$. Let $\phi_x: M_1=R_1(x) \rightarrow M_2=R_2(x)$ be the $k\text{Aut}_{\mathcal{C}}(x)$-module homomorphism in $\pi$, which is the direct sum of $\phi_V: M_1(V) \rightarrow M_2(V)$ for all simple summands $V$ of $M_1$. If $F(\pi) = \{\phi_V' : V \text{ is a vertex in } Q\}$ is 0, then in particular $\phi'_V=0$ and therefore $\phi_V=0$ (see Remark 7.1). Since $V$ is an arbitrary summand of $M_1$, we have $\phi_x =0$. Consequently, $\pi$ is 0.\\

\textbf{$F$ is full.} Let $\pi' = \{\phi_V' : V \text{ is a vertex in } Q\}$ be a $kQ$-module homomorphism from $F(R_1) = R_1'$ to $F(R_2) = R_2'$. We can recover a $k\mathcal{C}$-module homomorphism $\pi = \{\phi_x \mid x \in \text{Ob} (\mathcal{C})\}$ from $\pi'$ such that $F(\pi) = \pi'$. Indeed, to define $\phi_x: M_1 = R_1(x) \rightarrow M_2=R_2(x)$ for a fixed object $x$, it is enough to define $\phi_V: M_1(V) \rightarrow M_2(V)$, where $V$ is a simple summand of $M_1$. According to Remark 7.1, $\phi_V$ could be recovered from $\phi'_{V} \in \pi'$. In this way we get from $\pi'$ a family of linear transformations $\pi =\{\phi_x \mid x \in \text{Ob} (\mathcal{C}) \}$. Clearly, $F(\pi)$ and $\pi'$ have the same matrix representation, i.e., $F(\pi) = \pi$ under the chosen decompositions and the chosen bases. The only thing which remains is to show that $\pi$ is indeed a $k\mathcal{C}$-module homomorphism. By Proposition 3.2, we only need to verify that diagram (7.2) commutes for every representative unfactorizable morphism $\alpha: x \rightarrow y$.\

The $k \text{Aut} _{\mathcal{C}} (x)$-module $M_1$ ($M_2$, resp.) is a direct sum of subspaces of the form $M_1(V, U, s)$ ($M_2(V,U,s)$, resp.). The $k \text{Aut} _{\mathcal{C}} (y)$-module $N_1$ ($N_2$) is a direct sum of subspace of the form $N_1(W, T, l)$ ($N_2(W,T,l)$, resp.). By Lemma 7.1, it is enough to check that for an arbitrary $M_i(V, U, s)$ and an arbitrary $N_i(W, T, l)$, $i=1,2$, diagram (7.2) commutes. The morphism $\alpha$ determines groups $G_0 \unlhd G_1 \leqslant G$. Again by Lemma 3.5 and Remark 3.7, the maps $\varphi'_i: M_i(V,U,s) \rightarrow N_i(W,T,l)$ derived from $\varphi_i$, $i=1, 2$, are 0 or $k(G_1/G_0)$-module homomorphisms under the given identification. If they are 0, the diagram commutes trivially. Otherwise, $U \cong T \mid k\uparrow _{G_0}^{G_1}$, and the list $(\alpha, V, W, U, s, l)$ determines an arrow. The linear maps assigned to this arrow by $R_1'$ and $R_2'$, combined with $\phi_V'$ and $\phi_W'$ give the commutative diagram (7.4). By Remark 7.2 the diagram (7.2) also commutes. Thus $\pi$ is indeed a $k\mathcal{C}$-module homomorphism and $F(\pi) =\pi'$. That is, $F$ is full.\\

We proved that $F$ is dense, faithful and full. By Theorem 1 on page 91 of ~\cite{MacLane} $F$ gives rise to a Morita equivalence between $kQ$ and $k\mathcal{C}$. This finishes the proof.
\end{proof}

\end{document}